\newif \ifreview \reviewfalse
\newcommand*\patchAmsMathEnvironmentForLineno[1]{%
    \expandafter\let\csname old#1\expandafter\endcsname\csname #1\endcsname
    \expandafter\let\csname oldend#1\expandafter\endcsname\csname end#1\endcsname
    \renewenvironment{#1}%
    {\linenomath\csname old#1\endcsname}%
    {\csname oldend#1\endcsname\endlinenomath}}%
  \newcommand*\patchBothAmsMathEnvironmentsForLineno[1]{%
    \patchAmsMathEnvironmentForLineno{#1}%
    \patchAmsMathEnvironmentForLineno{#1*}}%
\crefname{equation}{}{}
\newtheorem{lemma}{Lemma}[section]
\newtheorem{proposition}[lemma]{Proposition}
\newtheorem{theorem}[lemma]{Theorem}
\newtheorem{corollary}[lemma]{Corollary}
\newtheorem{setting}[lemma]{Setting}
\crefname{subsection}{Subsection}{Subsections}
\crefname{enumi}{item}{items}
\newcommand{\1}{\ensuremath{\mathbbm{1}}}
\providecommand{\N}{{\ensuremath{\mathbbm{N}}}}
\providecommand{\Z}{{\ensuremath{\mathbbm{Z}}}}
\providecommand{\R}{{\ensuremath{\mathbbm{R}}}}
\providecommand{\E}{{\ensuremath{\mathbbm{E}}}}
\newcommand{\xeqref}[1]{}
\providecommand{\var}{{\ensuremath{\operatorname{\mathbb{V}ar}}}}
\renewcommand{\P}{{\ensuremath{\mathbbm{P}}}}
\renewcommand{\gets}{\curvearrowleft}
\newcommand{\F}{{\ensuremath{\mathbbm{F}}}}
\renewcommand{\limsup}{\varlimsup}
\renewcommand{\liminf}{\varliminf}
\newcommand{\unif}{\ensuremath{\mathfrak{r}}}
\newcommand{\totalD}{\mathsf{D}}
\newcommand{\approximationX}{\mathcal{X}}
\newcommand{\exponentLP}{p}
\newcommand{\exponentV}{p_1}
\newcommand{\exponentX}{p_2}
\newcommand{\expFirstNorm}{q_1}
\newcommand{\expSecondNorm}{q_2}
\newcommand{\FEU}{\mathcal{C}}
\newcommand{\Yappr}{\mathscr{Y}}\newcommand{\thetaBar}{\theta}
\newcommand{\FEY}{\mathfrak{C}}
\newcommand{\tnorm}[2]{{\left\vert\kern-0.25ex\left\vert\kern-0.25ex\left\vert #1     \right\vert\kern-0.25ex\right\vert\kern-0.25ex\right\vert}_{#2}}
\renewcommand{\epsilon}{\varepsilon}
\title{Multilevel Picard approximations for high-dimensional\\
decoupled forward-backward stochastic differential equations}
\author
{Martin Hutzenthaler$^{1}$ \\
 Tuan Anh Nguyen$^{2}$\bigskip\\
\small{$^1$ Faculty of Mathematics, University of Duisburg-Essen,}\\
\small{Essen, Germany; e-mail: \texttt{martin.hutzenthaler}\textcircled{\texttt{a}}\texttt{uni-due.de}}\\
\small{$^2$ Faculty of Mathematics, University of Duisburg-Essen,}\\
\small{Essen, Germany; e-mail: \texttt{tuan.nguyen}\textcircled{\texttt{a}}\texttt{uni-due.de}}
}
\begin{document}

\maketitle
\begin{abstract}
Backward stochastic differential equations (BSDEs)
appear in numeruous applications.
Classical approximation methods suffer from the curse of dimensionality
and deep learning-based approximation methods are not known to converge to the BSDE solution.
Recently, Hutzenthaler et al.\ \cite{hutzenthaler2021overcoming} introduced a new
approximation method for BSDEs whose forward diffusion is Brownian motion
and proved that this method converges with essentially optimal rate without suffering from
the curse of dimensionality.
The central object of this article is to extend this result to general forward diffusions.
The main challenge is that we need to establish convergence in temporal-spatial H\"older norms
since the forward diffusion cannot be sampled exactly in general.
\end{abstract}

\tableofcontents
{%
\makeatletter
\let\@makefnmark\relax
\let\@thefnmark\relax
\@footnotetext{\emph{Key words and phrases:}
stochastic differential equation, strong convergence, Euler--Maruyama approximation,  Lipschitz condition, Lyapunov function,
curse of dimensionality, high-dimensional SDEs, high-dimensional PDEs,
high-dimensional BSDEs, multilevel Picard approximations, multilevel Monte Carlo method. }
\@footnotetext{\emph{AMS 2010 subject classification}:Primary 60H30; Secondary 65C05, 65C30.} 
\makeatother
}%
%
%
%
%
%
\section{Introduction}
Backward differential equations (BSDEs) belong to the most frequently studied equations
in stochastic analysis and computational stochastics.
BSDEs arise in the solution of stochastic optimal control problems (see, e.g., \cite{touzi2012optimal, pham2009continuous, yong1999stochastic}),
BSDEs appear in the approximative valuation of financial products such as financial derivative contracts (see, e.g., \cite{ElKarouiPengQuenez1997,crepey2013financial,delong2013backward}),
and BSDEs are strongly linked to nonlinear partial differential equations (PDEs) which themselves arise naturally in many applications (see, e.g., \cite{PardouxPeng1992,peng1991probabilistic,pardoux1999bsdes,pardoux2014stochastic}). 
BSDEs in applications typically do not have a solution in closed form
and are often high-dimensional (where dimension refers to the underlying noise).

In view of the importance of BSDEs,
it has been an active research field to contruct efficient approximation methods 
for the last decades.
We refer, for example, to
\cite{
  abbas2020conditional,  
BallyPages2003,
 BenderDenk2007,
  Bender2015Primal,
BouchardTouzi2004,
  BriandLabart2014,
  ChassagneuxRichou2015, 
 CrisanManolarakis2012,
  FuZhaoZhou,
gobet2007error,
 GobetLabart2010,
 GobetLemorWarin2005,
  LionnetDosReisSzpruch2015, 	      
  Pham2015, 
  Turkedjiev2015,
  Zhang2004,
 zhao2006new}.
These methods, however, suffer from the curse of dimensionality
(cf., e.g., Bellman \cite{Bellman}, Novak \& Wozniakowski \cite[Chapter~1]{NovakWozniakowski2008I}, and Novak \& Ritter \cite{MR1485004})
in the sense that the number of computational operations 
to approximatively compute one sample path of the BSDE solution
grows at least exponentially in the reciprocal $\nicefrac{1}{\varepsilon}$
of the prescribed approximation accuracy $\varepsilon \in (0,\infty)$
or the dimension $d\in \N=\{1,2,3,\ldots\}$.
Moreover,
we refer, for example, to \cite{chen2019deep,EHanJentzen2017CMStat,FujiiTakahashiTakahashi2017,HenryLabordere2017}
and the references mentioned in the overview articles \cite{beck2020overview,han2020algorithms}
for deep learning-based approximation methods for BSDEs. These deep learning-based approximation methods, however,
are not known to converge to the BSDE solution.

Recently, \cite{EHutzenthalerJentzenKruse2016, HJKNW2018} introduced full history recursive multilevel Picard (MLP) approximations.
This nonlinear Monte Carlo method indeed overcomes the curse of dimensionality in the numerical approximation of
a large class of semilinear
partial differential equations (PDEs); see, e.g., \cite{HJKNW2018, hutzenthaler2019overcoming, HJKN20, HutzenthalerKruse2017, hjk2019overcoming, beck2019overcoming, beck2020overcoming,hutzenthaler2021strong,hutzenthaler2022multilevel} and,
 e.g., \cite{becker2020numerical} for simulations.
Based on the MLP method and on the multilevel approach of \cite{h98,Heinrich01},
\cite{hutzenthaler2021overcoming} then introduced an approximation method for BSDEs
and proved that this new method converges essentially with rate $\nicefrac{1}{2}$ to the BSDE solution
without suffering from the curse of dimensionality.
The main assumptions of \cite[Theorem 5.1]{hutzenthaler2021overcoming}
are that the terminal condition is a Lipschitz function of a forward diffusion which can be sampled exactly
 (e.g. Brownian motion or the Ornstein-Uhlenbeck process)
and that the nonlinearity is $z$-independent and has a bounded and globally Lipschitz continuous derivative.

In this article we extend the results
of \cite{hutzenthaler2021overcoming} to decoupled forward-backward stochastic differential equations (FBSDEs).
More precisley the contribution of this article is as follows:
\begin{enumerate}[(i)]
  \item We extend the approximation method
   of \cite{hutzenthaler2021overcoming} to more general forward diffusions
   (note for this
   that the Euler approximations in \eqref{eq:intro.Euler}
   agree with the solution of \eqref{eq:intro.X} if $\mu_d=0$ and $\sigma_d$ is constant).
   We note that it is more efficient that
   the  Euler approximations in \eqref{eq:intro.U} 
   are based on $m^l$ (resp.\ $m^{l-1}$)
   subintervals than using $m^n$ subintervals 
   (as was done in \cite{HJKN20}).
  \item The improved (compared to \cite{HJKN20}) efficiency has the
  drawback that the arguments on the right-hand side of
  \eqref{eq:intro.U} are not identical
  so that we need to analyze H\"older regularity of the MLP approximations.
  We prove for the first time that MLP approximations of semilinear PDEs
  converge in H\"older norms; see \Cref{p05} below.
  The proof of \Cref{s02}
  also uses that Euler approximations of SDEs converge
  in H\"older norms under suitable assumptions which was recently established 
  in \cite{HN21};
  cf.\ also \cref{m33b} below.
  We note that we assume higher regularity of all coefficient functions
  for our analysis with temporal-spatial H\"older norms.
 \item Our error criterion is the $L^2$-norm of the path distance where
  supremum over time is inside expectation; cf.\ \Cref{h01}.
  This improves the error criterion of \cite{hutzenthaler2021overcoming}
  where the $L^2$-distance at single time points was estimated.
  To achieve these improved error estimates, we analyze $L^p$-distances
  for MLP approximations
  (this was first done in \cite{hutzenthaler2021strong})
  for arbitrary $p\in[2,\infty)$ and apply an argument of
  \cite{CHJvNW21} of Kolmogorov-Chentsov-type.
\end{enumerate}
The following theorem is the main result of this article.

\begin{theorem}\label{h01}
Let  $T, c,\delta \in (0,\infty)$, 
 $  \Theta = \bigcup_{ n \in \N }\! \Z^n$,
for every $d\in\N=\{1,2,\ldots\}$ let
$f_d\in C^2( \R,\R)$, 
$g_d\in C^2( \R^d,\R)$,
$\mu_d\in C^2(\R^d,\R^d)$,
$\sigma_d \in C^2(\R^d,\R^{d\times d})$,
for every $d\in\N$ let $\|\cdot\|$ denote the standard norm on $\R^d$
and the Frobenius norm on $\R^{d\times d}$,
assume for all 
$d\in\N$, $w\in\R$,
$x,y,z\in\R^d$ that
$\lvert Tf_d(0)\rvert+ \lVert \mu_d(0)\rVert+\lVert \sigma_d(0)\rVert\leq cd^c$,
$\lvert f_d'(w)\rvert\leq c$,
$\lvert f_d''(w)\rvert\leq c$,
$  \lvert (\totalD g_d(x))(y)\rvert\leq c d^c\lVert y\rVert$, 
$  \lVert (\totalD \mu_d(x))(y)\rVert\leq c d^c\lVert y\rVert$,
$  \lVert (\totalD \sigma_d(x))(y)\rVert\leq c d^c\lVert y\rVert$,
$  \lvert (\totalD^2 g_d(x))(y,z)\rvert\leq c d^c\lVert y\rVert
\lVert  z\rVert$, 
$
\lVert (\totalD^2 \mu_d (x))(y,z)\rVert\leq c d^c\lVert y\rVert\lVert z\rVert$, 
and
$ 
\lVert (\totalD^2 \sigma_d (x))(y,z)\rVert\leq c d^c\lVert y\rVert\lVert z\rVert$, 
let $(\Omega,\mathcal{F},\P, (\F_t)_{t\in [0,T]})$ be a filtered probability space which satisfies the usual conditions,
let
$\unif^\theta\colon \Omega\to[0,1]$,
$\theta\in \Theta$, be i.i.d.\ random variables which satisfy for all
 $t\in [0,1]$
that
$\P(\unif^0 \leq t)= t$,
let $W^{d,\theta} \colon [0,T]\to \R^d$, $d\in\N$, $\theta\in\Theta$, be independent standard $(\F_t)_{t\in [0,T]}$-Brownian motions, assume that
$(\unif^{\theta})_{\theta\in\Theta} $ and
$(W^{d,\theta})_{d\in\N,\theta\in\Theta}$ are independent, for every $d,n\in\N$,
$\theta\in\Theta$,
$s\in[0,T]$,
$x\in\R^d$
let
$(\approximationX^{d,n,\theta,x}_{s,t})_{t\in[s,T]}
\colon [s,T] \times\R^d\times \Omega\to\R^d$ satisfy for all 
$k\in [0,n-1]\cap\Z$,
$t\in\bigl[\max\{s,\frac{kT}{n}\},\max\{s,\frac{(k+1)T}{n}\}\bigr]$
 that $\approximationX^{d,n,\theta,x}_{s,s}=x$ and
\begin{align}\small\begin{split}\label{eq:intro.Euler}
\approximationX^{d,n,\theta,x}_{s,t}&=
\approximationX^{d,n,\theta,x}_{s,\max\{s,\frac{kT}{n}\}}+ \mu_d\bigl(
\approximationX^{d,n,\theta,x}_{s,\max\{s,\frac{kT}{n}\}} \bigr)\bigl(t-\max\{s,\tfrac{kT}{n}\}\bigr) +
\sigma_d\bigl(
\approximationX^{d,n,\theta,x}_{s,\max\{s,\frac{kT}{n}\}} \bigr)\bigl(W^{d,\theta}_t-W^{d,\theta}_{\max\{s,\frac{kT}{n}\}}\bigr),\end{split}
\end{align}
let 
$ 
  {U}_{ n,m}^{d,\theta } \colon [0, T] \times \R^d \times \Omega \to \R
$,
$d\in\N$,
$n,m\in\Z$, $\theta\in\Theta$, satisfy for all $d,n,m\in \N$, $\theta\in\Theta$, $t\in[0,T]$, $x\in\R^d$ that
$
{U}_{-1,m}^{d,\theta}(t,x)={U}_{0,m}^{d,\theta}(t,x)=0$ and
\begin{equation}  \begin{split}\label{eq:intro.U}
  {U}_{n,m}^{d,\theta}(t,x)
=  &\sum_{\ell=0}^{n-1}\frac{1}{m^{n-\ell}}
    \sum_{i=1}^{m^{n-\ell}}
\Biggl[
      g_d\bigl(\approximationX^{d,m^\ell,(\theta,\ell,i),x}_{t,T}\bigr)-\1_{\N}(\ell)
g_d \bigl(\approximationX^{d,m^{\ell-1},(\theta,\ell,i),x}_{t,T}\bigr) \\
 &+(T-t)
     \bigl( f_d\circ {U}_{\ell,m}^{d,(\theta,\ell,i)}\bigr)
\left(t+(T-t)\unif^{(\theta,\ell,i)},\approximationX_{t,t+(T-t)\unif^{(\theta,\ell,i)}}^{d,m^\ell,(\theta,\ell,i),x}\right) \\
&-\1_{\N}(\ell)(T-t)
\bigl(f_d\circ {U}_{\ell-1,m}^{d,(\theta,\ell,-i)}\bigr)
    \left(t+(T-t)\unif^{(\theta,\ell,i)},\approximationX_{t,t+(T-t)\unif^{(\theta,\ell,i)}}^{d,m^{\ell-1},(\theta,\ell,i),x}   \right) \Biggr],
\end{split}     \end{equation}
 let $\lfloor \cdot \rfloor_m \colon \R \to \R$, $ m \in \N $, and 
$\lceil \cdot \rceil_m \colon  \R \to \R$, $ m \in \N $, 
satisfy for all $m \in \N$, $t \in [0,T]$ that
$\lfloor t \rfloor_m = \max( ([0,t]\backslash \{T\}) \cap \{ 0, \frac{ T }{ m }, \frac{ 2T }{ m }, \ldots \} )$
and 
$\lceil t \rceil_m = \min(((t,\infty) \cup \{T\})\cap  \{ 0, \frac{ T }{ m }, \frac{ 2T }{ m }, \ldots \} )$,
let $\Yappr^{d,n,m}=(\Yappr^{d,n,m}_t)_{t\in[0,T]}
\colon [0,T]\times\Omega\to\R $,
 $d,n,m\in\N$, 
satisfy  for all $d,n,m\in\N$, $t\in[0,T]$  that
\begin{equation}  \begin{split}\label{Y.intro}
\Yappr^{d,n,m}_{t}
= &\sum_{\ell=0}^{n-1}\Biggl[
\left[ \tfrac{ \lceil t \rceil_{m^{\ell+1}} - t }{ ( T / m^{ \ell + 1 } ) } \right]
U^{d,\ell}_{n-\ell,m}(\lfloor t \rfloor_{m^{\ell+1}}, \approximationX^{d,m^n,0,0}_{0,\lfloor t \rfloor_{m^{\ell+1}}})+
\left[\tfrac{ t-\lfloor t \rfloor_{m^{\ell+1}} }{ ( T / m^{ \ell + 1 } ) }\right]
U^{d,\ell}_{n-\ell,m}(\lceil t \rceil_{m^{\ell+1}}, \approximationX^{d,m^n,0,0}_{0,\lceil t \rceil_{m^{\ell+1}}}) \\
&-\1_{\N}(\ell)\biggl(
\left[ \tfrac{ \lceil t \rceil_{m^{\ell}} - t }{ ( T / m^{ \ell  } ) } \right]U^{d,\ell}_{n-\ell,m}(\lfloor t \rfloor_{m^{\ell}}, \approximationX^{d,m^n,0,0}_{0,\lfloor t \rfloor_{m^{\ell}}})+
\left[ \tfrac{ t-\lfloor  t \rfloor_{m^{\ell}} }{ ( T / m^{ \ell  } ) } \right]
U^{d,\ell}_{n-\ell,m}(\lceil t \rceil_{m^{\ell}}, \approximationX^{d,m^n,0,0}_{0,\lceil t \rceil_{m^{\ell}}})
\biggr)\Biggr],
\end{split}     \end{equation}
for every 
$ d,n,m\in\N$
let
$\FEY_{d,n,m}\in\N_0$ be the number of realizations of scalar random variables
and the number of function evaluations of $(f_d,g_d,\mu_d,\sigma_d)$ 
which are used to compute one realization of
$(\Yappr^{d,n,m}_{kT/m^n})_{k\in \{0,1,\ldots, m^n\}} $ (cf.\ \eqref{c10}--\eqref{c09} for a precise definition),
let $M\colon\N\to\N$ satisfy for all $n\in\N$
that $M(n)-\sqrt{\log(n)})\in[0,1]$,
and for every $d\in\N$ let $X^d,Z^d\colon[0,T]\times\Omega\to\R^d$
and $Y^d\colon[0,T]\times\Omega\to\R$
be
 $(\F_{t})_{t\in[0,T]}$-predictable stochastic processes
 such that
$
\int_0^T  \E \bigl[\|X_s^d\|+\lvert Y_s^d\rvert+\| Z_s^{d}\|^2\bigr]ds<\infty
$ and such that
for all  $t\in[0,T]$ it holds a.s.\ that
\begin{align}\label{eq:intro.X}
X^{d}_{t}&=  \int_{0}^{t} \mu_d(X^{d}_{r})\,dr+\int_{0}^{t} \sigma_d(X^{d}_{r})\,dW^{d,0}_r,\\
Y^d_t&=g_d(X^{d}_{T})+\int_t^T f_d(Y^d_s)\,ds-\int_t^T (Z_s^{d})^{T}\, dW_s^{d,0}.
\end{align}
Then 
there exist
$\gamma\in(0,\infty)$,
 $\mathsf{n}\colon \N\times(0,1)\to \N$ such that for all 
$d\in\N$,
$\epsilon\in (0,1)$, $n\in [\mathsf{n}(d,\epsilon),\infty)\cap\N$ it holds that
$
\left(\E\!\left[
\sup_{t\in[0,T]}
\left\lvert \Yappr^{d,n,M(n)}_{t}-  Y^d_t \right\rvert^{2}\right]\right)^{\nicefrac{1}{2}}<\epsilon$
 and 
$
\epsilon^{2+\delta}
{\FEY}_{d,n,M(n)}
\leq \gamma d^\gamma
$.
\end{theorem}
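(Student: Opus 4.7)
The plan is to reduce the FBSDE problem to a semilinear PDE problem via the nonlinear Feynman--Kac formula. Under the given $C^2$ regularity of $\mu_d,\sigma_d,f_d,g_d$, the decoupled FBSDE admits the representation $Y^d_t=u_d(t,X^d_t)$, where $u_d\colon[0,T]\times\R^d\to\R$ is the classical solution of the associated Kolmogorov PDE. This reduces the task to (a) controlling the difference between the MLP approximations $U^{d,\ell}_{n-\ell,m}$ and $u_d$ in a temporal-spatial H\"older norm, (b) controlling the Euler--Maruyama strong error $\approximationX^{d,m^n,0,0}_{0,\cdot}-X^d_\cdot$, and (c) controlling the linear interpolation error in time implicit in \eqref{Y.intro}.

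The central step is to prove, by induction on $n$, an $L^p$-estimate of the form
\begin{equation*}
\bigl(\E\bigl[\tnorm{U^{d,0}_{n,m}-u_d}{\alpha}^p\bigr]\bigr)^{\nicefrac{1}{p}}\leq C\, d^C\, e^{Cn}\, m^{-\nicefrac{n}{2}}
\end{equation*}
for arbitrary $p\in[2,\infty)$ and some H\"older exponent $\alpha\in(0,\nicefrac{1}{2})$, where $\tnorm{\cdot}{\alpha}$ denotes the temporal-spatial $\alpha$-H\"older norm on compact sets. This is exactly the statement advertised as \Cref{p05}. The recursion is driven by the fixed-point identity $u_d(t,x)=\E[g_d(\approximationX^{d,\infty,\cdot,x}_{t,T})]+\int_t^T\E[(f_d\circ u_d)(s,\approximationX^{d,\infty,\cdot,x}_{t,s})]\,ds$, Minkowski's inequality in $L^p$, the Lipschitz property of $f_d$, and the variance reduction $m^{-(n-\ell)}$ in each telescoping level. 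To carry the H\"older seminorm through the recursion I would use: the H\"older regularity of Euler approximations of the forward SDE established in~\cite{HN21} (cf.\ \cref{m33b}); the corresponding regularity of $u_d$ inherited from the coefficients; and a Kolmogorov--Chentsov-type upgrade along the lines of~\cite{CHJvNW21}, which turns pointwise $L^p$-bounds into $L^p$-bounds on suprema over compact space-time boxes.

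With the H\"older estimate in hand, I would decompose $\Yappr^{d,n,M(n)}_t-Y^d_t$ into four contributions: (i) the piecewise-linear interpolation error in \eqref{Y.intro}, which is of order $(Tm^{-(\ell+1)})^\alpha$ on level $\ell$ and is summable in $\ell$ because of the $m^{-\alpha(n-\ell)}$ decay combined with the telescoping structure; (ii) the MLP error on the Euler grid, controlled by step~2; (iii) the strong Euler error $\mathcal{X}^{d,m^n,0,0}_{0,\cdot}-X^d_\cdot$, whose $L^2$-supremum-in-time norm is of order $d^C m^{-\nicefrac{n}{2}}$ under our $C^2$ assumptions; and (iv) the exact equality $Y^d_t=u_d(t,X^d_t)$ from Feynman--Kac. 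Moving the supremum over $t$ inside expectation is again done via the Kolmogorov--Chentsov argument of~\cite{CHJvNW21} applied to the continuous modifications of the MLP iterates and of the Euler paths.

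Finally, the complexity bound is obtained by counting random variables and function evaluations in \eqref{eq:intro.U}--\eqref{Y.intro}: this yields $\FEY_{d,n,M(n)}\leq c\, d^c\, (c M(n))^{n}\, m^{n}$ in essentially the same form as in~\cite{hutzenthaler2021overcoming}, and with the choice $M(n)-\sqrt{\log n}\in[0,1]$ the product of cost and (error)$^{2+\delta}$ stays polynomial in $d$; the function $\mathsf{n}(d,\epsilon)$ is then extracted by inverting the error bound. The main obstacle is unambiguously the proof of \Cref{p05}: all previous MLP analyses control only pointwise $L^p$-norms, and the subsampling at grid resolution $m^{\ell+1}$ in \eqref{Y.intro} forces evaluation of the MLP iterates at nearby but non-identical space-time arguments, which is exactly what the H\"older estimate is needed for and which requires the higher regularity assumptions on $\mu_d,\sigma_d,g_d$.
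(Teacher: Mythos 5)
Your overall architecture coincides with the paper's: reduce via Feynman--Kac to the PDE solution $u_d$, prove an MLP error estimate in a temporal-spatial H\"older-type norm (\cref{p05}), combine it with the Euler regularity estimates of \cite{HN21}, an interpolation lemma for \eqref{Y.intro}, a Kolmogorov--Chentsov-type upgrade from \cite{CHJvNW21}, and a cost recursion. However, the way you state the key estimate is not what \cref{p05} asserts, and taken literally it would break the argument. You claim a bound on $\bigl(\E\bigl[\tnorm{U^{d,0}_{n,m}-u_d}{\alpha}^p\bigr]\bigr)^{\nicefrac{1}{p}}$ where $\tnorm{\cdot}{\alpha}$ is a pathwise H\"older norm on compact space-time sets, i.e.\ the supremum over space-time is \emph{inside} the expectation. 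The paper's seminorm \eqref{h19} is the opposite object: the $L^p$-norm of a single increment $U(t_1,x_1)-U(t_2,x_2)$ sits inside, the supremum over $(t_1,x_1),(t_2,x_2)$ sits outside, and the increments are weighted by powers of the Lyapunov function $V$ over all of $\R^d$. This distinction matters twice. First, compact-set control is insufficient because in \eqref{Y.intro} the MLP fields are evaluated along the Euler path $\approximationX^{d,m^n,0,0}_{0,\cdot}$, which is not confined to any compact set; the Lyapunov weighting in \eqref{h19} together with the moment bounds $\lVert V(t,\approximationX^{n,0,x}_{s,t})\rVert_{\exponentV}\le V(s,x)$ is precisely what replaces a compact-set argument. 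Second, pushing a supremum over a $(1+d)$-dimensional parameter set inside the expectation via Kolmogorov--Chentsov requires moment exponents growing with $d$, which would destroy (or at least seriously complicate) the polynomial-in-$d$ complexity bound. The paper never needs this: the only sup-inside-expectation step is for the scalar, one-parameter process $t\mapsto\Yappr^{d,n,m}_t-Y^d_t$, where \cite{CHJvNW21} is applied with parameter dimension one, fed by the pointwise-in-time $L^p$ error bound \eqref{k05a} and the $L^p$-H\"older continuity in time of $u_d(t,X^{d,0}_{0,t})$ from \cref{m01}.

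So to repair your plan you should (a) replace the pathwise compact-set H\"older norm by the weighted seminorms of \eqref{h19} (sup of $L^p$-norms of increments, Lyapunov-weighted, over all of $\R^d$), which is what the closed recursion \eqref{h37} is formulated in, and (b) reserve the Kolmogorov--Chentsov argument exclusively for the final time-supremum of the scalar error process, not for the MLP fields or the Euler paths in space-time. With these corrections your decomposition (interpolation error at level $\ell$, MLP error on the grid, Euler strong error, Feynman--Kac identification) and your cost count match the paper's proof of \cref{s02}.
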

\noindent
\cref{h01} follows directly from \cref{s02}.

Now we heuristically motivate our approximation method
\cref{Y.intro} for fixed $d\in\N$.
By the nonlinear Feynman-Kac formula of \cite{PardouxPeng1992}
the FBSDE solution $Y^d$ satisfies a.s.\ that $Y^d=(u(t,X_t^d))_{t\in[0,T]}$
where $u$ is the viscosity solution of the associated PDE.
This PDE solution also solves the stochastic fixed-point equation
\begin{equation}  \begin{split}\label{fixed-point}
  \forall (t,x)\in[0,T]\times\R^d\colon
  u(t,x)=(\Phi(u))(t,x):=
  \E\Big[g(X_T^{t,x})+\int_t^T f_d(u(s,X_s^{t,x}))\,ds\Big]
\end{split}     \end{equation}
where $X^{t,x}$ is the solution of SDE \eqref{eq:intro.X} starting
in point $x\in\R^d$ at time $t\in[0,T]$; see \cite{beck2021nonlinear}.
The Picard approximation theorem suggests that the Picard iterates
$u_n:=\Phi^n(0)$, $n\in\N$, converge to $u$.
Now in the telescope expansion
\begin{equation}  \begin{split}
  u\approx u_n=\sum_{l=1}^{n-1}\big(u_{l+1}-\1_{\N}(l)u_l\big)
  =\sum_{l=0}^{n-1}\big(\Phi(u_{l}-\1_{\N}(l)\Phi(u_{l-1})\big)
\end{split}     \end{equation}
we approximate expectations and temporal integrals by
Monte Carlo averages where for large $l$ we need fewer Monte Carlo
samples (say $m^{n-l}$) since $u_l-u_{l-1}$ is then already small.
This roughly motivates \cref{eq:intro.U}.
To motivate \cref{Y.intro} let $\mathscr{L}_n$ be the piecewise
affine-linear interpolation of its argument restricted to the uniform
subgrid of $[0,T]$ with $m^n$ subintervals.
Now in the telescope expansion
\begin{equation}  \begin{split}
  Y\approx \mathscr{L}_n(Y)=\sum_{l=0}^{n-1}
  \big(\mathscr{L}_{l+1}(Y)-\1_{\N}(l)\mathscr{L}_{l}(Y)\big)
\end{split}     \end{equation}
it suffices to approximate $Y$ in the $l$-th summand by
$U_{n-l,m}(\cdot,X)$
since $\big(\mathscr{L}_{l+1}-\mathscr{L}_{l}\big)\big(Y-U_{n-l,m}(\cdot,X)\big)$
is already of the desired order $m^{-l}\cdot m^{l-n}=m^{-n}$
as the MLP approximations converge to $u$ in suitable H\"older norms.

The remainder of this article is organized as follows. \Cref{sec2} establishes
temporal-spatial H\"older-type regularity of solutions of the stochastic fixed-point equation
\cref{fixed-point}.
We derive error estimates for MLP approximations of PDEs in $L^p$-norms
in \Cref{sec3}
and in temporal-spatial H\"older norms in \Cref{sec4}.
Finally, in
\Cref{sec5}  
we prove \Cref{h01}.

\section{H\"older regularity of solutions to stochastic fixed-point equations}
\label{sec2}
In this section we derive the H\"older-type regularity estimate \cref{hoelder.regularity}
for $f\circ u$. This could in a second step be used to derive H\"older regularity of $u$.
Later we only need the regularity of $f\circ u$ and the regularity of $f\circ u$ is slightly
easier to derive from the fixed-point equations \cref{fixed-point}.
For \cref{l05} below we assume a general forward diffusion which satisfies a flow property,
a Lyapunov-type estimate,
and certain strong continuity in the starting point.

\begin{setting}\label{s12}
Let $d\in \N$,  $\exponentV,\exponentX\in [1,\infty)$,
 $c,T\in (0,\infty)$,
$f\in C(\R,\R)$,
$g\in C(\R^d,\R)$,
${V}\in C([0,T]\times  \R^d,   [1,\infty)) $
 satisfy that 
$ \tfrac{6}{\exponentV}+\tfrac{2}{\exponentX} \leq 1$,
let $\lVert\cdot \rVert\colon \R^d\to[0,\infty)$ be a norm, 
let $(\Omega,\mathcal{F},\P)$ be a probability space,
for every random variable 
$\mathfrak{X}\colon\Omega\to\R$
 let $\lVert\mathfrak{X}\rVert_{r} \in [0,\infty]$, $r\in[1,\infty)$,
satisfy for all $r\in[1,\infty)$ that $\lVert\mathfrak{X}\rVert_{r}^r= \E[\lvert\mathfrak{X}\rvert^r]$,
let
$(X^{x}_{s,t})_{s\in[0,T],t\in[s,T],x\in\R^d}
\colon \{(\mathfrak{s},\mathfrak{t})\in [0,T]^2\colon \mathfrak{s}\leq \mathfrak{t}\} \times\R^d\times \Omega\to\R^d
$ be measurable,
and
assume for all 
$s\in [0,T]$, $t\in[s,T]$,
$r\in [t,T]$, $x,y,\tilde{x},\tilde{y}\in\R^d$, 
$v,w,\tilde{v},\tilde{w}\in\R$,
$A\in ( \mathcal{B}(\R^d))^{\otimes \R^d}$,
$B\in (\mathcal{B}(\R^d))^{\otimes ([t,T]\times\R^d)}
$
 that
\begin{equation}
\lvert g(x)\rvert\leq V(T,x),\quad
\lvert Tf(0)\rvert\leq   V(s,x),\label{l06}
\end{equation}
\begin{equation}%
\begin{split}\label{l01}
& 
\left\lvert 
(g(x)-g(y))-( g(\tilde{x})-g(\tilde{y}))
\right\rvert \\
&
\leq 
\tfrac{V(T,x)+V(T,y)+V(T,\tilde{x})+V(T,\tilde{y})}{4}
\left(
\tfrac{\lVert(x-y)-(\tilde{x}-\tilde{y})\rVert}{\sqrt{T}} + \tfrac{(\lVert x-y\rVert+\lVert\tilde{x}-\tilde{y}\rVert)\lVert x-\tilde{x}\rVert}{2T}\right),
\end{split}
\end{equation}%
\begin{equation}
\label{l02}
\begin{split}
&
\left\lvert 
(f(v)-f(w))-( f(\tilde{v})-f(\tilde{w}))
\right\rvert 
\leq c\left\lvert (v-w)-(\tilde{v}-\tilde{w})\right\rvert  + \tfrac{c\left(\lvert v-w\rvert + \lvert \tilde{v}-\tilde{w}\rvert\right)\lvert v-\tilde{v}\rvert}{2},
\end{split}
\end{equation}%
\begin{equation}\label{d01}\begin{split}
\P \!\left(
X_{ t,r}^{X_{ s,t }^x } = X_{s,r}^x\right)=1,
\quad 
\P \!\left(X_{ s,t }^{\cdot} \in A,
X_{ t,\cdot}^{\cdot} \in B\right)= 
\P \!\left(X_{ s,t }^{\cdot} \in A\right)\P \!\left(
X_{ t,\cdot}^{\cdot} \in B\right)
,\end{split}
\end{equation}%
\begin{equation}
\begin{split}\label{l03}
&
\left\lVert \left\lVert 
(X_{s,t}^{x}-X_{s,t}^{y})
-(X_{s,t}^{\tilde{x}}-X_{s,t}^{\tilde{y}})\right\rVert
\right\rVert_{\exponentX}\\
&\leq 
\tfrac{V(s,x)+V(s,y)+V(s,\tilde{x})+V(s,\tilde{y})}{4}
\left[
\left\lVert 
(x-y)-(\tilde{x}-\tilde{y})
\right\rVert
+ \tfrac{\left(
\lVert x-y\rVert+\lVert\tilde{x}-\tilde{y}\rVert\right)
\lVert x-\tilde{x}\rVert}{2\sqrt{T}}\right]
,
\end{split}
\end{equation}%
and
\begin{equation}\begin{split}
\label{l04}
\left\lVert \left\lVert \bigl( X_{s,t}^x- 
X_{s,t}^y\bigr) -
( x-y)\right\rVert\right\rVert_{\exponentX}\leq \tfrac{(V(s,x)+V(s,y))}{2} \lVert x-y\rVert\tfrac{\lvert t-s\rvert^{\nicefrac{1}{2}}}{\sqrt{T}}.
\end{split}\end{equation}%
\end{setting}

\begin{lemma}\label{b02}
Assume \cref{s12} and assume for all $s\in [0,T]$, $t\in[s,T]$, $x\in\R^d$ that
\begin{equation}\label{l05}\begin{split}
 \left\lVert \left\lVert X_{s,t}^x-x\right\rVert\right\rVert_{\exponentX}\leq V(s,x)\lvert t-s\rvert^{\nicefrac{1}{2}}\quad\text{and}\quad\left\lVert V(t,X_{s,t}^x)\right\rVert_{\exponentV}\leq  V(s,x).
\end{split}\end{equation}
Then
\begin{enumerate}[(i)]\itemsep0pt
\item \label{k04}
there exists a unique measurable  $u\colon [0,T]\times\R^d\to\R$ which satisfies for all $t\in[0,T]$, $x\in \R^d$ that
$\E\bigl[\lvert
g(X_{t,T}^x)\rvert\bigr]+
\int_{t}^{T} \E\bigl[\lvert f( u(r,X_{t,r}^x))\rvert
\bigr]\,d r+\sup_{r\in[0,T],\xi\in\R^d}\frac{\lvert u(r,\xi)\rvert}{V(r,\xi)}<\infty$ and
$
u(t,x)=
\E\bigl[
g(X_{t,T}^x)\bigr]+
\int_{t}^{T} \E\bigl[f( u(r,X_{t,r}^x))
\bigr]\,dr,
$
\item\label{k03} it holds for all
$s\in[0,T]$,
$t\in[s,T]$,
$x,y\in\R^d$ 
that
 $\lvert u(t,x)\rvert\leq 2e^{c(T-t)}V(t,x)$ and
\begin{equation}
\lvert u(s,x)-u(t,y)\rvert\leq 4 e^{2 c T}
\left(\tfrac{V(s,x)+V(t,y)}{2}\right)^2
\tfrac{V(s,x)\lvert t-s\rvert ^{\nicefrac{1}{2}}+
\lVert x-y\rVert}{\sqrt{T}},
\end{equation} and
\item 
\label{k15}it holds
for all $s\in[0,T]$, $t\in [s,T]$,
 $x,y,\tilde{x},\tilde{y}\in\R^d$ that
\begin{align}\small\begin{split}\label{hoelder.regularity}
&\left\lvert \bigl[
f(
u(s,x))-f(u(s,y))\bigr]-\bigl[f(u(t,\tilde{x}))-f(u(t,\tilde{y}))\bigr]
\right\rvert \leq 896 ce^{6cT}
\left(\tfrac{V(s,x)+V(s,y)+V(t,\tilde{x})+V(t,\tilde{y})}{4}\right)^7\\
&\quad \cdot 
\left(
\tfrac{\lVert x-y-(\tilde{x}-\tilde{y})\rVert}{ \sqrt{T} }+
\tfrac{\lVert x-y\rVert+\lVert\tilde{x}-\tilde{y}\rVert}{\sqrt{T}}
\left[2
\tfrac{V(s,x)+V(s,y)}{2}\tfrac{\lvert t-s\rvert^{\nicefrac{1}{2}}}{\sqrt{T}}+\tfrac{\lVert x-\tilde{x}\rVert}{\sqrt{T}}\right]\right).\end{split}
\end{align}
\end{enumerate}
\end{lemma}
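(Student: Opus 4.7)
The plan is to treat the three parts in order, using a Banach fixed-point argument for (\ref{k04}), a direct Gronwall argument for (\ref{k03}), and a second-order Gronwall bootstrap for (\ref{k15}).

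For (\ref{k04}), I would work in the complete metric space $\{u\colon[0,T]\times\R^d\to\R\text{ measurable}:\sup_{t,x}|u(t,x)|/V(t,x)<\infty\}$ equipped with a Bielecki-type weighted norm $\lVert u\rVert_{V,\lambda}=\sup_{t,x}e^{-\lambda(T-t)}|u(t,x)|/V(t,x)$. The right-hand side of the fixed-point equation defines an operator $\Phi$ mapping this space into itself: the boundary contribution is controlled by $|g(x)|\leq V(T,x)$ and the Lyapunov bound $\lVert V(T,X^x_{t,T})\rVert_{\exponentV}\leq V(t,x)$, while the integral term uses $|Tf(0)|\leq V(s,x)$ together with global $c$-Lipschitzness of $f$ (take $\tilde v=\tilde w=0$ in \cref{l02}). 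Choosing $\lambda>c$ makes $\Phi$ a strict contraction, yielding the unique fixed point $u$.

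For (\ref{k03}), the pointwise bound $|u(t,x)|\leq 2e^{c(T-t)}V(t,x)$ follows by inserting $|g|\leq V(T,\cdot)$, $|f(\cdot)|\leq|f(0)|+c|\cdot|$, and the Lyapunov property into the fixed-point equation, then applying Gronwall to $t\mapsto\sup_x|u(t,x)|/V(t,x)$. For the first-order Hölder regularity of $u$, I would write $u(s,x)-u(t,y)=[u(s,x)-u(t,X^x_{s,t})]+[u(t,X^x_{s,t})-u(t,y)]$, use the flow identity $X^{X^x_{s,t}}_{t,\cdot}=X^x_{s,\cdot}$ a.s.\ (from \cref{d01}) to expand each bracket, and control the resulting $g$- and $f$-differences by the first-order specialisations of \cref{l01}, \cref{l03} and the time increment \cref{l04}. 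A Gronwall step in $s\mapsto$ (the Hölder quotient) closes this.

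For the main estimate (\ref{k15}), I start by splitting via \cref{l02}: the left-hand side is bounded by $c$ times the second-order $u$-difference $|[u(s,x)-u(s,y)]-[u(t,\tilde x)-u(t,\tilde y)]|$ plus $\tfrac{c}{2}(|u(s,x)-u(s,y)|+|u(t,\tilde x)-u(t,\tilde y)|)\,|u(s,x)-u(t,\tilde x)|$. The product term is bounded directly by the first-order Hölder estimate from (\ref{k03}). For the second-order $u$-difference, substitute the fixed-point equation for each of the four $u$-values, use the flow property to rewrite $X^{x}_{s,r}=X^{X^x_{s,t}}_{t,r}$ and $X^{y}_{s,r}=X^{X^y_{s,t}}_{t,r}$, so all four processes share the initial time $t$ with perturbed starts $(X^x_{s,t},X^y_{s,t},\tilde x,\tilde y)$. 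The boundary $g$-term is then bounded using \cref{l01} together with \cref{l03} (second-order $X$-differences) and \cref{l04} (to extract the $|t-s|^{1/2}$ factor from $X^x_{s,t}-x$ and $X^y_{s,t}-y$). The integral on $[s,t]$ of length $|t-s|$ is controlled by the first-order estimate in (\ref{k03}). The integral on $[t,T]$ is again split by \cref{l02} into a second-order $u$-difference (which goes into Gronwall) plus a product term handled by (\ref{k03}). All expectations of products are then estimated by Hölder's inequality with the $V$-factors in $L^{\exponentV}$ and the $X$-difference factors in $L^{\exponentX}$.

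The main obstacle is careful bookkeeping of $V$-powers. Each application of the Lyapunov property $\lVert V(r,X^x_{t,r})\rVert_{\exponentV}\leq V(t,x)$ and each use of the $L^{\exponentX}$-bounds \cref{l03}, \cref{l04} on products costs one reciprocal exponent. In the quadratic second-order terms one accumulates up to six $V$-factors (from the prefactors of \cref{l01}, the composition with $u$, and the pointwise bound of (\ref{k03})) and up to two $X$-difference factors, which is exactly what the assumption $\tfrac{6}{\exponentV}+\tfrac{2}{\exponentX}\leq 1$ covers. Once the resulting inequality is put in the form $D(t)\leq A+c\int_t^T D(r)\,dr$ with $A$ equal to the claimed right-hand side (modulo numerical constants), Gronwall yields the factor $e^{cT}$ inside the stated constant $896\,c\,e^{6cT}$, where $e^{6cT}$ tracks the six $V$-factors paired with pointwise bounds of $u$ via (\ref{k03}).
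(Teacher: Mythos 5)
Your proposal follows essentially the same route as the paper for the main part \eqref{k15}: split with \eqref{l02} into $c$ times the second\-/order $u$-difference plus a product term, bound the product term by the first-order estimate of \eqref{k03}, insert the intermediate points $X^x_{s,t},X^y_{s,t}$ via the flow property in \eqref{d01}, expand with the fixed-point equation, and close with a backward (reverse) Gronwall argument on the second-order difference of $f\circ u$ along the flow, with exactly the $\tfrac{6}{\exponentV}+\tfrac{2}{\exponentX}\leq 1$ exponent bookkeeping. Two remarks. First, for \eqref{k04} and \eqref{k03} the paper does not reprove anything but invokes \cite[Lemma~3.1]{hutzenthaler2021overcoming}; your Bielecki-norm fixed-point argument plus Gronwall reproduces the content of that lemma directly, which is legitimate but adds work the paper avoids (and note that $u(t,X^x_{s,t})$ is random, so your decompositions in (ii) and (iii) should be written under expectations, e.g.\ $u(s,x)-u(t,y)=\E[u(s,x)-u(t,X^x_{s,t})]+\E[u(t,X^x_{s,t})-u(t,y)]$). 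Second, the one step you gloss over is how the deterministic-start estimates \eqref{l01}, \eqref{l03}, \eqref{l04} and the Gronwall-closed bound are applied when two of the four starting points are the random variables $X^x_{s,t},X^y_{s,t}$: this needs the independence statement in \eqref{d01} together with a disintegration argument (the paper first closes the Gronwall estimate for deterministic starts in \eqref{k13}--\eqref{k13b} and only then disintegrates at the random intermediate points in \eqref{k10b}, also using the integrability $\int_s^T\lVert f(u(r,X^x_{s,r}))\rVert_1\,dr<\infty$ from (i) to justify Gronwall). Making that conditioning explicit is necessary, but it is a routine technical insertion rather than a gap in the strategy.
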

\begin{proof}[Proof of \cref{b02}]
First, observe that \eqref{l02}, \eqref{l01}, and \eqref{l03} show for all 
$s\in [0,T]$, $t\in [s,T]$,
$x,y\in\R^d$, $v,w\in\R$ that
\begin{equation}\label{l02b}
\lvert f(v)-f(w)\rvert =
\lvert (f(v)-f(w))-(f(v)-f(v))\rvert\leq c \lvert v-w\rvert,
\end{equation}
\begin{equation}\begin{split}
&
\lvert g(x)-g(y)\rvert=
\tfrac{\left\lvert (g(x)-g(y)) -(g(x)-g(x))
\right\rvert}{2}
+
\tfrac{\left\lvert (g(y)-g(x)) -(g(y)-g(y))
\right\rvert}{2}\\
&\leq\tfrac{1}{2} \tfrac{3V(T,x)+V(T,y)}{4}
\tfrac{\lVert x-y\rVert}{\sqrt{T}}
+ \tfrac{1}{2}
\tfrac{3V(T,y)+V(T,x)}{4}
\tfrac{\lVert y-x\rVert}{\sqrt{T}}= \tfrac{V(T,x)+V(T,y)}{2}\tfrac{\lVert x-y\rVert}{\sqrt{T}},
\end{split}\end{equation}
and
\begin{equation}
\begin{split}
&
\left\lVert \left\lVert 
X_{s,t}^{x}-X_{s,t}^{y}
\right\rVert
\right\rVert_{\exponentX}
=
\tfrac{\left\lVert \left\lVert 
(X_{s,t}^{x}-X_{s,t}^{y})
-(X_{s,t}^{x}-X_{s,t}^{x})\right\rVert
\right\rVert_{\exponentX}}{2}
+
\tfrac{\left\lVert \left\lVert 
(X_{s,t}^{y}-X_{s,t}^{x})
-(X_{s,t}^{y}-X_{s,t}^{y})\right\rVert
\right\rVert_{\exponentX}}{2}
\\
&\leq \tfrac{1}{2}
\tfrac{3V(s,x)+V(s,y)}{4}
\left\lVert 
x-y
\right\rVert+
\tfrac{1}{2}\tfrac{3V(s,y)+V(s,x)}{4}
\left\lVert 
y-x
\right\rVert= \tfrac{V(s,x)+V(s,y)}{2} \lVert x-y\rVert.
\end{split}\label{l03b}
\end{equation}
This, \cite[Lemma~3.1]{hutzenthaler2021overcoming}  (applied with 
$L\gets c$,
$p_3\gets\exponentV  $, 
$f\gets ([0,T]\times \R^d\times \R \ni (t,x,w) \mapsto f(w)\in\R)$,
$ \phi \gets V$,
$\psi\gets V$
 in the notation of \cite[Lemma~3.1]{hutzenthaler2021overcoming}), the fact that
$\frac{2}{\exponentV}+\frac{1}{\exponentX}\leq 1$, the fact that $1\leq V$,
the assumptions on measurability, 
\eqref{l06},
\eqref{d01}, and \eqref{l05}
prove \eqref{k04} and \eqref{k03}.

Next,
\eqref{l01}, H\"older's inequality, 
the fact that $\frac{1}{\exponentV}+\tfrac{2}{\exponentX}\leq 1$,
the triangle inequality, 
\eqref{l05}, \eqref{l03}, 
\eqref{l03b}, and the fact that $1\leq V$ show for all $s\in[0,T]$,  
$x_1,x_2,y_1,y_2\in\R^d$
 that
\begin{align}
&\left\lVert 
(g(X^{x_1}_{s,T})-g(X^{y_1}_{s,T}))-( g(X^{x_2}_{s,T})-g(X^{y_2}_{s,T}))
\right\rVert_1\nonumber \\
&
\leq\xeqref{l01}\biggl\lVert
\tfrac{V(T,X^{x_1}_{s,T})+V(T,X^{y_1}_{s,T})
+V(T,X^{x_2}_{s,T})+V(T,X^{y_2}_{s,T})
}{4}\nonumber \\
&\qquad\qquad\cdot\left(
\tfrac{\left\lVert (X_{s,T}^{x_1}-X_{s,T}^{y_1})-(X_{s,T}^{x_2}-
X_{s,T}^{y_2})\right\rVert}{\sqrt{T}} + 
\tfrac{\left(\left\lVert X_{s,T}^{x_1}-X_{s,T}^{y_1}\right\rVert+\left\lVert X_{s,T}^{x_2}-X_{s,T}^{y_2}\right\rVert\right)\left\lVert X_{s,T}^{x_1}-X_{s,T}^{x_2}\right\rVert}{2T}\right)\biggr\rVert_1\nonumber\\
&\leq 
 \left\lVert 
\tfrac{V(T,X^{x_1}_{s,T})+V(T,X^{y_1}_{s,T})
+V(T,X^{x_2}_{s,T})+V(T,X^{y_2}_{s,T})
}{4}\right\rVert_{\exponentV}\nonumber \\
&\qquad\cdot \left(
\tfrac{\left\lVert \left\lVert (X_{s,T}^{x_1}-X_{s,T}^{y_1})-(X_{s,T}^{x_2}-
X_{s,T}^{y_2})\right\rVert\right\rVert_{\exponentX}}{\sqrt{T}}+
\tfrac{\left(\left\lVert \left\lVert X_{s,T}^{x_1}-X_{s,T}^{y_1}\right\rVert\right\rVert_{\exponentX}
+
\left\lVert \left\lVert X_{s,T}^{x_2}-X_{s,T}^{y_2}\right\rVert\right\rVert_{\exponentX}\right)\left\lVert \left\lVert X_{s,T}^{x_1}-X_{s,T}^{x_2}\right\rVert\right\rVert_{\exponentX}}{2T}\right)\nonumber \\
&\leq\xeqref{l05} \tfrac{V(s,x_1)+V(s,y_1)+V(s,x_2)+V(s,y_2)}{4}
\Biggl(\xeqref{l03}\tfrac{V(s,x_1)+V(s,y_1)+V(s,x_2)+V(s,y_2)}{4}\nonumber \\
&\qquad\qquad\cdot 
\tfrac{\left\lVert 
(x_1-y_1)-(x_2-y_2)
\right\rVert
+ \tfrac{\left(
\lVert x_1-y_1\rVert+ \lVert x_2-y_2\rVert \right)
\lVert x_1-x_2\rVert }{2\sqrt{T}}}{\sqrt{T}}\nonumber \\
&+ \xeqref{l03b}\tfrac{
2\tfrac{V(s,x_1)+V(s,y_1)+V(s,x_2)+V(s,y_2)}{4}
\left(
\lVert x_1-y_1\rVert +
\lVert x_2-y_2\rVert \right)
2\tfrac{V(s,x_1)+V(s,y_1)+V(s,x_2)+V(s,y_2)}{4}\lVert x_1-x_2\rVert
}{2T}\Biggr)\nonumber \\
&\leq 5
\left(\tfrac{V(s,x_1)+V(s,y_1)+V(s,x_2)+V(s,y_2)}{4}\right)^3
\left[\tfrac{\lVert (x_1-y_1)-(x_2-y_2)\rVert}{\sqrt{T}}
+ 
\tfrac{\left(
\lVert x_1-y_1\rVert + \lVert x_2-y_2\rVert \right)
\lVert x_1-x_2\rVert}{2T}
\right].
\label{k05}\end{align}
Next, \eqref{k03}, H\"older's inequality, the fact that $\tfrac{2}{\exponentV}+\tfrac{1}{\exponentX}\leq \tfrac{1}{2}$,  the triangle inequality, 
\eqref{l05}, and \eqref{l03b} show for all $s\in[0,T]$, $t\in [s,T]$, $x,y\in\R^d$ that
\begin{align}
&\left\lVert 
 u(t,X_{s,t}^{x})-
 u(t,X_{s,t}^{y})\right\rVert_2\leq 
\xeqref{k03}
\left\lVert 
4 e^{2 c T}\left(\tfrac{V(t,X_{s,t}^{x})+V(t,X_{s,t}^{y})}{2}\right)^2
\tfrac{\left\lVert X_{s,t}^{x}-X_{s,t}^{y}\right\rVert}{\sqrt{T}}
\right\rVert_{2}\nonumber\\
&\leq 
4 e^{2 c T}
\left\lVert 
\tfrac{
V(t,X_{s,t}^{x})+V(t,X_{s,t}^{y})}{2}\right\rVert_{\exponentV}^2
\tfrac{\left\lVert \left\lVert X_{s,t}^{x}-X_{s,t}^{y}\right\rVert\right\rVert_{\exponentX}}{\sqrt{T}}\leq 
4 e^{2 c T} \xeqref{l05}\left(\tfrac{V(s,x)+V(s,y)}{2}\right)^3
\xeqref{l03b}
\tfrac{\lVert x-y \rVert}{\sqrt{T}}.\label{k08}\end{align}
This, \eqref{l02}, H\"older's inequality,  the triangle inequality, and the fact that
$\tfrac{c}{2}\left(4e^{2cT}\right)^2 2^6= 2^9ce^{4cT}$ imply for all $s\in[0,T]$, $t\in [s,T]$, $x_1,x_2,y_1,y_2\in\R^d$ that
\begin{align}
&
\left\lVert 
\left(
f( u(t,X_{s,t}^{x_1}))-
f( u(t,X_{s,t}^{y_1}))\right)
-\left(f( u(t,X_{s,t}^{x_2}))-f( u(t,X_{s,t}^{y_2}))\right)\right\rVert_1\nonumber\\
&\quad - c
\left\lVert 
\left(
 u(t,X_{s,t}^{x_1})-
 u(t,X_{s,t}^{y_1})\right)
-\left( u(t,X_{s,t}^{x_2})- u(t,X_{s,t}^{y_2})\right)\right\rVert_1\nonumber\\
&\leq \xeqref{l02}
\tfrac{c}{2}
\Bigl[
\left\lVert 
 u(t,X_{s,t}^{x_1})-
 u(t,X_{s,t}^{y_1})\right\rVert_2+\left\lVert  u(t,X_{s,t}^{x_2})- u(t,X_{s,t}^{y_2})\right\rVert_2 
\Bigr] \left\lVert u(t,X_{s,t}^{x_1})-u(t,X_{s,t}^{x_2})\right\rVert_2\nonumber \\
&\leq 
\tfrac{c}{2}\xeqref{k08}
4 e^{2 c T}\left[ \left(\tfrac{V(s,x_1)+V(s,y_1)}{2}\right)^3
\tfrac{\lVert x_1-y_1\rVert}{\sqrt{T}}
+ \left(\tfrac{V(s,x_2)+V(s,y_2)}{2}\right)^3
\tfrac{\lVert x_2-y_2\rVert}{\sqrt{T}}
\right]
\!
4 e^{2 c T} \left(\tfrac{V(s,x_1)+V(s,x_2)}{2}\right)^3
\tfrac{\lVert x_1-x_2\rVert}{\sqrt{T}}\nonumber \\
&\leq 2^{10} ce^{4cT}
\left(\tfrac{V(s,x_1)+V(s,y_1)+V(s,x_2)+V(s,y_2)}{4}\right)^6
\tfrac{\left[
\lVert x_1-y_1\rVert+\lVert x_2-y_2\rVert
\right]\lVert x_1-x_2\rVert}{2T}.\label{k22}
\end{align}
This, \eqref{k04}, 
\eqref{d01}, the disintegration theorem (see, e.g., \cite[Lemma~2.3]{HJKNW2018}),
and the triangle inequality
imply for all $s\in[0,T]$, $t\in [s,T]$, $x_1,x_2,y_1,y_2\in\R^d$ that
\begin{align}
&\left\lVert 
\left(
f( u(t,X_{s,t}^{x_1}))-
f( u(t,X_{s,t}^{y_1}))\right)
-\left(f( u(t,X_{s,t}^{x_2}))-f( u(t,X_{s,t}^{y_2}))\right)\right\rVert_1\nonumber \\
&
\quad - 2^{10} ce^{4cT} 
\left(\tfrac{V(s,x_1)+V(s,y_1)+V(s,x_2)+V(s,y_2)}{4}\right)^6
\tfrac{\left[
\lVert x_1-y_1\rVert+\lVert x_2-y_2\rVert
\right]\lVert x_1-x_2\rVert}{2T}\nonumber \\
&\leq \xeqref{k22} c
\left\lVert \Bigl[
(u(t,\tilde{x}_1)-u(t,\tilde{y}_1))-
(u(t,\tilde{x}_2)-u(t,\tilde{y}_2))\Bigr]
|_{\substack{\tilde{x}_1=X_{s,t}^{x_1},\tilde{y}_1=X_{s,t}^{y_1},\tilde{x}_2=X_{s,t}^{x_2},\tilde{y}_2=X_{s,t}^{y_2}}}
\right\rVert_1\nonumber \\
&\leq\xeqref{k04}  c \Biggl\lVert\Biggl\lVert
(g(X_{t,T}^{\tilde{x}_1})- 
g(X_{t,T}^{\tilde{y}_1}))
- (g(X_{t,T}^{\tilde{x}_2})-
g(X_{t,T}^{\tilde{y}_2}))+
{ \int_{t}^{T}}
\left(
f( u(r,X_{t,r}^{\tilde{x}_1}))-
f( u(r,X_{t,r}^{\tilde{y}_1}))\right)\nonumber \\
&\qquad\qquad\qquad\qquad\qquad  
-\left(f( u(r,X_{t,r}^{\tilde{x}_2}))-f( u(r,X_{t,r}^{\tilde{y}_2}))\right)dr\Biggr\lVert_1
\bigr|_{\substack{\tilde{x}_1=X_{s,t}^{x_1},\tilde{y}_1=X_{s,t}^{y_1},
\tilde{x}_2=X_{s,t}^{x_2},\tilde{y}_2=X_{s,t}^{y_2}}}\Biggr\rVert_1\nonumber \\
&= c \Biggl\lVert
(g(X_{s,T}^{x_1})- 
g(X_{s,T}^{y_1}))
- (g(X_{s,T}^{x_2})-
g(X_{s,T}^{y_2}))\nonumber \\
&\qquad \qquad +
{ \int_{t}^{T}}
\left(
f( u(r,X_{s,r}^{x_1}))-
f( u(r,X_{s,r}^{y_1}))\right)
-\left(f( u(r,X_{s,r}^{x_2}))-f( u(r,X_{s,r}^{y_2}))\right)dr\Biggr\rVert_1\nonumber 
\\
&\leq c
\left\lVert 
(g(X_{s,T}^{x_1})- 
g(X_{s,T}^{y_1}))
- (g(X_{s,T}^{x_2})-
g(X_{s,T}^{y_2}))
\right\rVert_1 \nonumber \\
&  \quad +
\int_{t}^{T}c\left\lVert 
\left(
f( u(r,X_{s,r}^{x_1}))-
f( u(r,X_{s,r}^{y_1}))\right)
-\left(f( u(r,X_{s,r}^{x_2}))-f( u(r,X_{s,r}^{y_2}))\right)\right\rVert_1dr.
\end{align}
This, \eqref{k05},  the fact that
$1\leq V
$, and the fact that 
$ 2^{10} ce^{4cT} +5c\leq 1029 ce^{4cT}  $
  imply for all $s\in[0,T]$, $t\in [s,T]$, $x_1,x_2,y_1,y_2\in\R^d$ that
\begin{align}
&
\left\lVert 
\left(
f( u(t,X_{s,t}^{x_1}))-
f( u(t,X_{s,t}^{y_1}))\right)
-\left(f( u(t,X_{s,t}^{x_2}))-f( u(t,X_{s,t}^{y_2}))\right)\right\rVert_1 
\nonumber \\
&\leq 
1029 ce^{4cT} 
\left(\tfrac{V(s,x_1)+V(s,y_1)+V(s,x_2)+V(s,y_2)}{4}\right)^6
\left[\tfrac{\lVert(x_1-y_1)-(x_2-y_2)\rVert}{\sqrt{T}}
+ 
\tfrac{\left(
\lVert x_1-y_1\rVert+\lVert x_2-y_2\rVert\right)
\lVert x_1-x_2\rVert}{2T}
\right]\nonumber \\
& \quad +
\int_{t}^{T}c\left\lVert 
\left(
f( u(r,X_{s,r}^{x_1}))-
f( u(r,X_{s,r}^{y_1}))\right)
-\left(f( u(r,X_{s,r}^{x_2}))-f( u(r,X_{s,r}^{y_2}))\right)\right\rVert_1dr.
\label{k13}
\end{align}
Moreover,  \eqref{k04}  shows for all
$s\in[0,T]$,
 $x\in\R^d$ that
$\int_{s}^{T}\left\lVert f(u(r,X_{s,r}^x))\right\rVert_1 dr<\infty  $.
This, \eqref{k13},  and the reverse
Gronwall lemma (see, e.g., \cite[Lemma~3.2]{hutzenthaler2019overcoming})
show 
for all $s\in[0,T]$, $t\in[s,T]$, $x_1,x_2,y_1,y_2\in\R^d$ that
\begin{align}
&
\left\lVert 
\left(
f( u(t,X_{s,t}^{x_1}))-
f( u(t,X_{s,t}^{y_1}))\right)
-\left(f( u(t,X_{s,t}^{x_2}))-f( u(t,X_{s,t}^{y_2}))\right)\right\rVert_1\nonumber \\
&\leq 
1029 ce^{5cT} 
\left(\tfrac{V(s,x_1)+V(s,y_1)+V(s,x_2)+V(s,y_2)}{4}\right)^6
\left[\tfrac{\lVert(x_1-y_1)-(x_2-y_2)\rVert}{\sqrt{T}}
+ 
\tfrac{\left(
\lVert x_1-y_1\rVert+\lVert x_2-y_2\rVert\right)
\lVert x_1-x_2\rVert }{2T}
\right].
\end{align}
This, \eqref{k05},  the fact that
$\forall\,x\in [0,\infty)\colon ex\leq e^x$, and the fact that
$1029e^{-1}\leq 379$
show 
for all $t\in[0,T]$,  $x_1,x_2,y_1,y_2\in\R^d$ that
$5+ 1029 cTe^{5cT}\leq 
5+ 1029e^{-1}e^{cT} e^{5cT}\leq 
  384 e^{6cT}$ and
\begin{align}
&
\left\lVert 
(g(X^{x_1}_{t,T})-g(X^{y_1}_{t,T}))-( g(X^{x_2}_{t,T})-g(X^{y_2}_{t,T}))
\right\rVert_1\nonumber \\
&+T\sup_{r\in [t,T]}
\left\lVert 
\left(
f( u(r,X_{t,r}^{x_1}))-
f( u(r,X_{t,r}^{y_1}))\right)
-\left(f( u(r,X_{t,r}^{x_2}))-f( u(r,X_{t,r}^{y_2}))\right)\right\rVert_1\nonumber \\
&\leq 384 e^{6cT}
\left(\tfrac{V(t,x_1)+V(t,y_1)+V(t,x_2)+V(t,y_2)}{4}\right)^6
\left[\tfrac{\lVert(x_1-y_1)-(x_2-y_2)\rVert}{\sqrt{T}}
+ 
\tfrac{\left(
\lVert x_1-y_1\rVert+\lVert x_2-y_2\rVert\right)
\lVert x_1-x_2\rVert}{2T}
\right].\label{k13b}
\end{align}
Next, the triangle inequality, \eqref{l04}, \eqref{l03b}, and \eqref{l05} show that for all $s\in[0,T]$, $t\in[s,T]$, $x,y,\tilde{x},\tilde{y}\in\R^d$
with $V(s,x)\leq V(s,y)$ that
\begin{align}
&
\tfrac{1}{\sqrt{T}}\left\lVert \left\lVert 
(X_{s,t}^x - X_{s,t}^y)   
-(\tilde{x} - \tilde{y} )\right\rVert\right\rVert_{\exponentX}+  
\tfrac{1}{2T}\left[\left\lVert \left\lVert 
X_{s,t}^x - X_{s,t}^y   \right\rVert\right\rVert_{\exponentX}+\left\lVert 
\tilde{x} - \tilde{y} \right\rVert\right] \left\lVert \left\lVert X_{s,t}^x-\tilde{x}\right\rVert\right\rVert_{\exponentX}\nonumber \\
&\leq \tfrac{
\left\lVert \left\lVert 
(X_{s,t}^x - X_{s,t}^y)   
-({x} - {y} )\right\rVert\right\rVert_{\exponentX}
+
\lVert x-y-(\tilde{x}-\tilde{y})\rVert}{\sqrt{T}}
+
\tfrac{\left[\left\lVert \left\lVert 
X_{s,t}^x - X_{s,t}^y  \right\rVert \right\rVert_{\exponentX}+\left\lVert 
\tilde{x} - \tilde{y} \right\rVert\right]\left[ \left\lVert \left\lVert X_{s,t}^x-x\right\rVert\right\rVert_{\exponentX} +\lVert x-\tilde{x}\lVert\right]}{2T}\nonumber \\
&\leq \xeqref{l04}
\tfrac{(V(s,x)+V(s,y))}{2} \tfrac{ \lVert x-y\rVert }{\sqrt{T}}\tfrac{\lvert t-s\rvert^{\nicefrac{1}{2}}}{\sqrt{T}}
+\tfrac{\lVert x-y-(\tilde{x}-\tilde{y})\rVert}{ \sqrt{T} }
+
\tfrac{\xeqref{l03b}\left[\frac{V(s,x)+V(s,y)}{2}
\lVert x-y\rVert+\lVert\tilde{x}-\tilde{y}\rVert
\right]\left[\xeqref{l05}
V(s,x)\lvert t-s\rvert^{\nicefrac{1}{2}}+\lVert x-\tilde{x}\rVert\right]}{2T}\nonumber \\
&\leq 
\tfrac{\lVert x-y-(\tilde{x}-\tilde{y})\rVert}{ \sqrt{T} }+
\tfrac{V(s,x)+V(s,y)}{2}
\tfrac{\lVert x-y\rVert+\lVert\tilde{x}-\tilde{y}\rVert}{\sqrt{T}}
\left[\tfrac{3}{2}
\tfrac{V(s,x)+V(s,y)}{2}\tfrac{\lvert t-s\rvert^{\nicefrac{1}{2}}}{\sqrt{T}}+\tfrac{\lVert x-\tilde{x}\rVert}{\sqrt{T}}\right].\label{k02b}
\end{align}
Moreover, \eqref{k04}, \eqref{d01}, the distributional and independence properties, and the  disintegration theorem (see, e.g., \cite[Lemma~2.4]{HJKNW2018}) show  for all $s\in[0,T]$, $t\in[s,T]$, $x\in\R^d$ that 
\begin{equation}\label{k02}\begin{split}
&\E [ u(t,X_{s,t}^x)  ] 
= \E\!\left[ u(t,\tilde{x})\bigr|_{\tilde{x}=X_{s,t}^x}\right]
\\&=\E\! \left[\E\!\left[
g(X_{t,T}^{\tilde{x}})+
\int_{t}^{T} f( u(r,X_{t,r}^{\tilde{x}}))\,dr\right]\bigr|_{\tilde{x}=X_{s,t}^x}
\right]=
\E\!\left [ g(X_{s,T}^x)  +\int_{t}^{T}f(u(r,X_{s,r}^x))\, dr\right].
\end{split}\end{equation}
This,  \eqref{k04},  Tonelli's theorem,
H\"older's inequality,
the fact that $ \tfrac{6}{\exponentV}+\tfrac{2}{\exponentX} \leq 1$,
the triangle inequality,
\eqref{k13b},  \eqref{l05},
and \eqref{k02b}   show 
 for all $s\in[0,T]$, $t\in[s,T]$, $x,y,\tilde{x},\tilde{y}\in\R^d$ that 
 \begin{align}
&
\left\lvert \E\bigl[\bigl( u(t,X_{s,t}^x) -u(t,X_{s,t}^y)\bigr)- 
\bigl(u(t,\tilde{x})-u(t,\tilde{y}) \bigr)\bigr]\right\rvert \nonumber\\
&\leq \xeqref{k04}
\Biggl\lVert\Biggl[  \left\lVert \left( g(X_{t,T}^\xi))
-g(X_{t,T}^\eta) \right)
-\left(g(X_{t,T}^{\tilde{x}}) 
- g(X_{t,T}^{\tilde{y}})\right)\right\rVert_1\nonumber \\
&\qquad+
T\sup_{r\in [t,T]}
 \left\lVert 
\left[ f( u(r,X_{t,r}^\xi))
-f( u(r,X_{t,r}^\eta)) \right] 
-\left[f( u(r,X_{t,r}^{\tilde{x}}) )
- f(u(r,X_{t,r}^{\tilde{y}}))\right]\right\rVert_1\Biggr]\bigr|_{\substack{\xi= X_{s,t}^x\\\eta= X_{s,t}^y}}\Biggr\rVert_1\nonumber\\
&\leq\xeqref{k13b} 384 e^{6cT}\left\lVert \left\lVert 
\left(\tfrac{V(t,\xi)+V(t,\eta)+V(t,\tilde{x})+V(t,\tilde{y})}{4}\right)^6
\left[\tfrac{\lVert(\xi- \eta)-(\tilde{x}-\tilde{y})\rVert}{\sqrt{T}}
+ 
\tfrac{\left(
\lVert\xi-\eta\rVert+\lVert\tilde{x}-\tilde{y}\rVert\right)
\lVert\xi-\tilde{x}\rVert}{2T}
\right]
\right\rVert_1\Bigr|_{\substack{\xi= X_{s,t}^x\nonumber,\eta= X_{s,t}^y}}\right\rVert_1\nonumber\\
&\leq 
384 e^{6cT}
\left\lVert \tfrac{V(t, X_{s,t}^x)+V(t,X_{s,t}^y)+V(t,\tilde{x})+V(t,\tilde{y})}{4}\right\rVert_{\exponentV}^6\nonumber \\
&\quad \cdot\left(\tfrac{1}{\sqrt{T}}\left\lVert \left\lVert 
(X_{s,t}^x - X_{s,t}^y)   
-(\tilde{x} - \tilde{y} )\right\rVert\right\rVert_{\frac{\exponentX}{2}}+  
\tfrac{1}{2T}\left[\left\lVert \left\lVert 
X_{s,t}^x - X_{s,t}^y   \right\rVert\right\rVert_{\exponentX}+\left\lVert 
\tilde{x} - \tilde{y} \right\rVert\right] \left\lVert \left\lVert X_{s,t}^x-\tilde{x}\right\rVert\right\rVert_{\exponentX}\right)\nonumber \\
&\leq 384 e^{6cT}\xeqref{l05}
\left(\tfrac{V(s,x)+V(s,y)+V(t,\tilde{x})+V(t,\tilde{y})}{4}\right)^6\nonumber \\
&\quad \cdot \xeqref{k02b}
\left(
\tfrac{\lVert x-y-(\tilde{x}-\tilde{y})\rVert}{ \sqrt{T} }+
\tfrac{V(s,x)+V(s,y)}{2}
\tfrac{\lVert x-y\rVert+\lVert\tilde{x}-\tilde{y}\rVert}{\sqrt{T}}
\left[\tfrac{3}{2}
\tfrac{V(s,x)+V(s,y)}{2}\tfrac{\lvert t-s\rvert^{\nicefrac{1}{2}}}{\sqrt{T}}+\tfrac{\lVert x-\tilde{x}\rVert}{\sqrt{T}}\right]\right)\nonumber \\
&\leq 
768 e^{6cT}
\left(\tfrac{V(s,x)+V(s,y)+V(t,\tilde{x})+V(t,\tilde{y})}{4}\right)^7\nonumber \\
&\quad \cdot 
\left(
\tfrac{\lVert x-y-(\tilde{x}-\tilde{y})\rVert}{ \sqrt{T} }+
\tfrac{\lVert x-y\rVert+\lVert\tilde{x}-\tilde{y}\rVert}{\sqrt{T}}
\left[\tfrac{3}{2}
\tfrac{V(s,x)+V(s,y)}{2}\tfrac{\lvert t-s\rvert^{\nicefrac{1}{2}}}{\sqrt{T}}+\tfrac{\lVert x-\tilde{x}\rVert }{\sqrt{T}}\right]\right)
.\label{k10b}
\end{align}
Next, \eqref{k02}, \eqref{l02b}, and \eqref{k08} show for all $s\in[0,T]$, $t\in [s,T]$,
 $x,y\in\R^d$ that
\begin{align}
&
\left\lvert 
\E\bigl[ 
\bigl(u(s,x)-u(s,y)\bigr) -\bigl( u(t,X_{s,t}^x) -u(t,X_{s,t}^y)\bigr)\bigr]\right\rvert 
= \xeqref{k02}\left\lvert \int_{s}^{t}\E\bigl[ f(u(r,X_{s,r}^x))-f(u(r,X_{s,r}^y))\bigr]\,dr\right\rvert \nonumber \\
&
\leq \int_{s}^{t}
 \left\lVert f( u(r,X_{s,r}^x))
-f( u(r,X_{s,r}^y))\right\rVert_1dr\leq\xeqref{l02b} c\lvert t-s \rvert \left[\sup\nolimits_{r\in [s,t]} \left\lVert u(r,X_{s,r}^x)-u(r,X_{s,r}^y)\right\rVert_1\right]\nonumber \\
&\leq\xeqref{k08} cT\tfrac{\lvert t-s \rvert ^{\nicefrac{1}{2}}}{\sqrt{T}}
4 e^{2 c T} \left(\tfrac{V(s,x)+V(s,y)}{2}\right)^3
\tfrac{\lVert x-y\rVert}{\sqrt{T}}= 4cTe^{2cT}
\left(\tfrac{V(s,x)+V(s,y)}{2}\right)^3
\tfrac{\lVert x-y\rVert}{\sqrt{T}}\tfrac{\lvert t-s \rvert ^{\nicefrac{1}{2}}}{\sqrt{T}}.
\label{k10}
\end{align}
Next, \eqref{k03} shows for all $s\in[0,T]$, $t\in [s,T]$,
 $x,y,\tilde{x},\tilde{y}\in\R^d$ that
\begin{align}
&\tfrac{c}{2}\left[
\lvert u(s,x)-u(s,y)\rvert + \lvert u(t,\tilde{x})-u(t,\tilde{y})\rvert
\right]
\lvert u(s,x)-u(t,\tilde{x})\rvert \nonumber \\
&\leq \tfrac{c(4 e^{2 c T})^2}{2}\!\left[
\left(\tfrac{V(s,x)+V(s,y)}{2}\right)^2
\tfrac{\lVert x-y\rVert}{\sqrt{T}}+
\left(\tfrac{V(t,\tilde{x})+V(t,\tilde{y})}{2}\right)^2
\tfrac{\lVert \tilde{x}-\tilde{y}\rVert}{\sqrt{T}}\right]
\left(\tfrac{V(s,x)+V(t,\tilde{x})}{2}\right)^2
\left(\tfrac{V(s,x)\lvert t-s \rvert ^{\nicefrac{1}{2}}}{\sqrt{T}}+
\tfrac{\lVert x-\tilde{x}\rVert}{\sqrt{T}}\right)\nonumber \\
&\leq 2^7 ce^{4cT}\!
\left(\tfrac{V(s,x)+V(s,y)+V(t,\tilde{x})+V(t,\tilde{y})}{4}\right)^4
\tfrac{\lVert x-y\rVert+\lVert\tilde{x}-\tilde{y}\rVert}{\sqrt{T}}
\left(V(s,x)\tfrac{\lvert t-s \rvert ^{\nicefrac{1}{2}}}{\sqrt{T}}+
\tfrac{\lVert x-\tilde{x}\rVert}{\sqrt{T}}\right) .\label{k23}
\end{align}
This,
\eqref{l02}, 
the triangle inequality, \eqref{k10}, \eqref{k10b}, \eqref{k03},  the fact that $1\leq V$, and the fact that
$4c^2T e^{2cT}2^3+ 768 c e^{6cT}+2^7ce^{4cT}\leq 
768 c e^{6cT} + 2^7ce^{4cT}(1+cT)\leq 896 c e^{6cT}
$
show for all $s\in[0,T]$, $t\in [s,T]$,
 $x,y,\tilde{x},\tilde{y}\in\R^d$ that
\begin{align}
&\left\lvert \bigl[
f(
u(s,x))-f(u(s,y))\bigr]-\bigl[f(u(t,\tilde{x}))-f(u(t,\tilde{y}))\bigr]\right\rvert \nonumber\\
&
\leq\xeqref{l02} c \left\lvert 
(u(s,x)-u(s,y))-(u(t,\tilde{x})-u(t,\tilde{y}))\right\rvert 
+\tfrac{c\left[
\lvert u(s,x)-u(s,y)\rvert +\lvert u(t,\tilde{x})-u(t,\tilde{y})\rvert
\right]\lvert u(s,x)-u(t,\tilde{x})\rvert}{2}\nonumber\\
&\leq c \left\lvert \E\bigl[ 
\bigl(u(s,x)-u(s,y)\bigr) -\bigl( u(t,X_{s,t}^x) -u(t,X_{s,t}^y)\bigr)\bigr]\right\rvert \nonumber\\
& 
+c\left\lvert \E\bigl[\bigl( u(t,X_{s,t}^x) -u(t,X_{s,t}^y)\bigr)- 
\bigl(u(t,\tilde{x})-u(t,\tilde{y}) \bigr)\bigr]\right\rvert +\tfrac{c\left[
\lvert u(s,x)-u(s,y)\rvert+\lvert u(t,\tilde{x})-u(t,\tilde{y})\rvert 
\right]\lvert u(s,x)-u(t,\tilde{x})\rvert}{2}
\nonumber\\
&\leq\xeqref{k10} 4c^2Te^{2cT}2^3
\left(\tfrac{V(s,x)+V(s,y)+V(t,\tilde{x})+V(t,\tilde{y})}{4}\right)^3
\tfrac{\lVert x-y\rVert}{\sqrt{T}}\tfrac{\lvert t-s\rvert^{\nicefrac{1}{2}}}{\sqrt{T}}+
\xeqref{k10b}
768 ce^{6cT}
\left(\tfrac{V(s,x)+V(s,y)+V(t,\tilde{x})+V(t,\tilde{y})}{4}\right)^7\nonumber \\
&\quad \cdot 
\left(
\tfrac{\lVert x-y-(\tilde{x}-\tilde{y})\rVert}{ \sqrt{T} }+
\tfrac{\lVert x-y\rVert+\lVert\tilde{x}-\tilde{y}\rVert}{\sqrt{T}}
\left[\tfrac{3}{2}
\tfrac{V(s,x)+V(s,y)}{2}\tfrac{\lvert t-s\rvert^{\nicefrac{1}{2}}}{\sqrt{T}}+\tfrac{\lVert x-\tilde{x}\rVert}{\sqrt{T}}\right]\right)\nonumber \\
&+\xeqref{k23}
2^7 ce^{4cT}\!
\left(\tfrac{V(s,x)+V(s,y)+V(t,\tilde{x})+V(t,\tilde{y})}{4}\right)^4
\tfrac{\lVert x-y\rVert+\lVert \tilde{x}-\tilde{y}\rVert}{\sqrt{T}}
\left(2\tfrac{V(s,x)+V(s,y)}{2}\tfrac{\lvert t-s\rvert^{\nicefrac{1}{2}}}{\sqrt{T}}+
\tfrac{\lVert x-\tilde{x}\rVert}{\sqrt{T}}\right)\nonumber \\
&\leq 896 ce^{6cT}
\left(\tfrac{V(s,x)+V(s,y)+V(t,\tilde{x})+V(t,\tilde{y})}{4}\right)^7\nonumber \\
&\quad \cdot 
\left(
\tfrac{\lVert x-y-(\tilde{x}-\tilde{y})\rVert}{ \sqrt{T} }+
\tfrac{\lVert x-y\rVert+\lVert\tilde{x}-\tilde{y}\rVert}{\sqrt{T}}
\left[2
\tfrac{V(s,x)+V(s,y)}{2}\tfrac{\lvert t-s\rvert^{\nicefrac{1}{2}}}{\sqrt{T}}+\tfrac{\lVert x-\tilde{x}\rVert}{\sqrt{T}}\right]\right).
\end{align}
This proves \eqref{k15}.
The proof of \cref{b02} is thus completed.
\end{proof}

\section{Error estimates for MLP approximations in $L^p$-norms}
\label{sec3}
In this section we prove strong convergence rates of MLP approximations
in $L^p$-norms; see \Cref{a02} below.
We note that we rescale the $L^p$-distance with a suitable Lyapunov-type
function and take then the supremum over time and space; see
the definition \cref{a14b} of our semi-norms.
The assumptions of \Cref{a02} are collected in the following \Cref{a01}
and include global Lipschitz continuity of the nonlinearity $f$,
local Lipschitz continuity of the terminal condition $g$
whose local Lipschitz constant grows at most like the Lyapunov-type function $V$,
and strong regularity estimates \cref{k01} and \cref{k04b} for the forward diffusion.
\Cref{a02} extends the analysis of \cite{hutzenthaler2021strong}
where the foward diffusion is Brownian motion.
Moreover, \Cref{c01} shows that MLP method approximates
semilinear PDEs with a computatinal effort which is of order $2+$ in the
reciprocal accuracy $\nicefrac{1}{\epsilon}$ and at most of polynomial order in the
dimension. \Cref{c01} improves \cite{HJKN20} which derived computational order
$4+$ in the reciprocal accuracy under slightly weaker assumptions on the coefficient
functions.

\begin{setting}\label{a01}
Let $d\in \N$, 
$\exponentV,\exponentX\in [1,\infty)$,
$c,T \in (0,\infty)$, 
$f \in C( \R,\R)$, 
$g\in C(\R^d,\R)$,
$V\in C([0,T]\times\R^d,[1,\infty))$,
let $\lVert\cdot\rVert\colon\R^d\to[0,\infty)$ be a norm,
let  $  \Theta = \bigcup_{ n \in \N } \Z^n$,
let
$(\Omega, \mathcal{F}, \P)$
be a probability space,
for every random variable 
$\mathfrak{X}\colon\Omega\to\R\cup\{-\infty,\infty\}$
 let $\lVert\mathfrak{X}\rVert_{r} \in [0,\infty]$, $r\in[1,\infty)$,
satisfy for all $r\in[1,\infty)$ that $\lVert\mathfrak{X}\rVert_{r}^r= \E[\lvert\mathfrak{X}\rvert^r]$,
let
$\unif^\theta\colon \Omega\to[0,1]$,
$\theta\in \Theta$, be i.i.d.\ random variables which satisfy for all
$t\in [0,1]$
that
$\P(\unif^0 \leq t)= t$,
let
$(X^{x}_{s,t})_{s\in[0,T],t\in[s,T],x\in\R^d}
\colon \{(\mathfrak{s},\mathfrak{t})\in [0,T]^2\colon \mathfrak{s}\leq \mathfrak{t}\} \times\R^d\times \Omega\to\R^d
$ be measurable,
$
(\approximationX^{n,\theta,x}_{s,t})_{s\in[0,T],t\in[s,T],x\in\R^d}
\colon \{(\mathfrak{s},\mathfrak{t})\in [0,T]^2\colon \mathfrak{s}\leq \mathfrak{t}\} \times\R^d\times \Omega\to\R^d$, $\theta\in\Theta$,
$n\in\N$, be measurable,
assume 
for all $n,\tilde{n}\in\N $ that 
$
\bigl(\approximationX^{n,\theta,x}_{s,t}, \approximationX^{\tilde{n},\theta,\tilde{x}}_{\tilde{s},\tilde{t}}\bigr)_{s,\tilde{s}\in[0,T], t\in[s,T],\tilde{t}\in[\tilde{s},T],x,\tilde{x}\in\R^d}$,
$\theta\in\Theta$,
are i.i.d.\ random fields,
assume for all 
$s\in [0,T]$, $t\in[s,T]$,
$x,y\in \R^d$,
$w_1,w_2\in \R$, $n\in\N$, $A\in ( \mathcal{B}(\R^d))^{\otimes \R^d}$,
$B\in (\mathcal{B}(\R^d))^{\otimes ([t,T]\times\R^d)}
$ 
that
$((X^{x}_{s,t},\approximationX^{n,\theta,x}_{s,t}))_{\theta\in\Theta}$ and
$(\unif^\theta)_{\theta \in\Theta}$ are independent  
and that
\begin{equation}
\label{a14}
  \lvert g(x)\rvert \leq  V(T,x), \quad 
\lvert Tf(0)\rvert\leq V(s,x ), 
\end{equation}%
\begin{equation}\label{a14c}
\lvert g(x)-g(y)\rvert \leq 
\tfrac{V(T,x)+V(T,y)}{2}\tfrac{\lVert x-y\rVert}{\sqrt{T}},\quad 
\lvert  f(w_1)-f(w_2)\rvert\leq c \lvert w_1-w_2\rvert,
\end{equation}
\begin{equation}\label{d01c}\begin{split}
\P \!\left(
X_{ t,r}^{X_{ s,t }^{x} } = X_{s,r}^{x}\right)=1,
\quad 
\P \!\left(X_{ s,t }^{(\cdot)} \in A,
X_{ t,(\cdot)}^{(\cdot)} \in B\right)= 
\P \!\left(X_{ s,t }^{(\cdot)} \in A\right)\P \!\left(
X_{ t,(\cdot)}^{(\cdot)} \in B\right)
,\end{split}
\end{equation}
\begin{equation}\label{k01}
\left\lVert \left\lVert 
X_{s,t}^{x}-x\right\rVert
\right\rVert_{\exponentX}\leq V(s,x)\lvert t-s\rvert^{\nicefrac{1}{2}},\quad 
\left\lVert \left\lVert 
X_{s,t}^{x}-X_{s,t}^{y}\right\rVert
\right\rVert_{\exponentX}
\leq 
\tfrac{V(s,x)+V(s,y)}{2}\lVert x-y\rVert,
\end{equation}
\begin{equation}
\left\lVert\left\lVert
\approximationX_{s,t}^{n,0,x}-X_{s,t}^{x}
\right\rVert
\right\rVert_{\exponentX}\leq \tfrac{\sqrt{T}}{\sqrt{n}}V(s,x),\quad\text{and}\quad
 \bigl\lVert V(t,\approximationX_{s,t}^{n,0,x})\bigr\rVert_{\exponentV}
\leq V(s,x),
\label{k04b}
\end{equation}
and
let 
$ 
  {U}_{ n,m}^{\theta } \colon [0, T] \times \R^d \times \Omega \to \R
$,
$n,m\in\Z$, $\theta\in\Theta$, satisfy for all 
$n,m\in \N$, $\theta\in\Theta$, $t\in[0,T]$, $x\in\R^d$ that
$
{U}_{-1,m}^{\theta}(t,x)={U}_{0,m}^{\theta}(t,x)=0$ and
\begin{align}
\label{t27}
&  {U}_{n,m}^{\theta}(t,x)
=  \sum_{\ell=0}^{n-1}\Biggl[\frac{1}{m^{n-\ell}}
    \sum_{i=1}^{m^{n-\ell}}
\biggl(
      g \bigl(\approximationX^{m^\ell,(\theta,\ell,i),x}_{t,T}\bigr)-\1_{\N}(\ell)
g \bigl(\approximationX^{m^{\ell-1},(\theta,\ell,i),x}_{t,T}\bigr)\nonumber \\
 &+(T-t)
     \bigl( f\circ {U}_{\ell,m}^{(\theta,\ell,i)}\bigr)
\left(t+(T-t)\unif^{(\theta,\ell,i)},\approximationX_{t,t+(T-t)\unif^{(\theta,\ell,i)}}^{m^\ell,(\theta,\ell,i),x}\right)\nonumber \\
&-\1_{\N}(\ell)(T-t)
\bigl(f\circ {U}_{\ell-1,m}^{(\theta,\ell,-i)}\bigr)
    \left(t+(T-t)\unif^{(\theta,\ell,i)},\approximationX_{t,t+(T-t)\unif^{(\theta,\ell,i)}}^{m^{\ell-1},(\theta,\ell,i),x}   \right) \biggr)\Biggr].
\end{align}
\end{setting}

\begin{lemma}[Independence and distributional properties]\label{b11}
Assume \cref{a01}. Then
\begin{enumerate}[i)]\itemsep0pt
\item\label{b12} it holds for all 
$n\in\N_0$,
$m\in\N$,
 $\theta\in\Theta$ that
$U_{n,m}^\theta$ is 
measurable,
\item \label{b10b}it holds for all 
$n\in\N_0$,
$m\in\N$,
 $\theta\in\Theta$ that
\begin{equation}\begin{split}
&
\sigma\!\left(\left\{ U_{n,m}^\theta(t,x)\colon t\in[0,T],x\in \R^d\right\}\right)\\
&
\subseteq
\sigma\!\left(\left\{ \unif^{(\theta,\nu)}, \approximationX^{m^\ell,(\theta,\nu),x}_{s,t}\colon 
\nu\in\Theta, \ell\in [0,n-1]\cap\Z,
s\in[0,T], t\in[s,T] ,x\in\R^d\right\}\right),
\end{split}\end{equation}
\item \label{a18}
it holds
for all 
$\theta \in\Theta $, 
$m\in \N $ that
$
\bigl(U_{\ell,m}^{(\theta,\ell,i)}(t,x))\bigr)_{t\in[0,T],x\in\R^d}$,
$ \bigl(
U_{\ell-1,m}^{(\theta,\ell,-i)}\bigr)_{t\in[0,T],x\in\R^d}$,
$ \bigl(\bigl(
\approximationX^{m^\ell,(\theta,\ell,i),x}_{s,t}, \approximationX^{m^{\ell-1},(\theta,\ell,i),\tilde{x}}\bigr)\bigr)_{s,\tilde{s}\in[0,T],t\in [s,T], \tilde{t}\in [\tilde{s},T],x,\tilde{x}\in\R^d}$,
$  \unif^{(\theta,\ell,i)}$, $ i\in\N$, $\ell \in \N_0$,
are independent,
\item \label{b10}  it holds 
for all  $n\in\N_0$,
$m\in\N$ 
  that
$( U_{n,m}^\theta(t,x))_{t\in[0,T],x\in\R^d} $, $\theta\in\Theta$, are identically distributed, and
 \item\label{b10c} it holds 
for all  
$\theta\in\Theta$,
$\ell\in \N_0$, $m\in\N$, $t\in[0,T]$, $x\in\R^d$
  that
\begin{align}\begin{split}
&\Bigl[
(T-t)
     \bigl( f\circ {U}_{\ell,m}^{(\theta,\ell,i)}\bigr)\!
\left(t+(T-t)\unif^{(\theta,\ell,i)},\approximationX_{t,t+(T-t)\unif^{(\theta,\ell,i)}}^{m^\ell,(\theta,\ell,i),x}\right)\\
&\qquad\qquad-\1_{\N}(\ell)(T-t)
\bigl(f\circ {U}_{\ell-1,m}^{(\theta,\ell,-i)}\bigr)\!
    \left(t+(T-t)\unif^{(\theta,\ell,i)},\approximationX_{t,t+(T-t)\unif^{(\theta,\ell,i)}}^{m^{\ell-1},(\theta,\ell,i),x}   \right)\Bigr],\quad i\in\N,
\end{split}
\end{align}
are i.i.d.\ and have the same distribution as
\begin{align}\label{b13}\begin{split}
&\Bigl[
(T-t)
     \bigl( f\circ {U}_{\ell,m}^{0}\bigr)\!
\left(t+(T-t)\unif^{0},\approximationX_{t,t+(T-t)\unif^{0}}^{m^\ell,0,x}\right)\\
&\qquad\qquad-\1_{\N}(\ell)(T-t)
\bigl(f\circ {U}_{\ell-1,m}^{1}\bigr)\!
    \left(t+(T-t)\unif^{0},\approximationX_{t,t+(T-t)\unif^{0}}^{m^{\ell-1},0,x}   \right)\Bigr].
\end{split}
\end{align}
\end{enumerate}
\end{lemma}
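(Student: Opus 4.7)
My plan is to prove all five claims together by strong induction on $n \in \N_0$, using the recursive definition \eqref{t27} together with the tree-structured indexing provided by $\Theta = \bigcup_{n \in \N} \Z^n$. The base cases $n \in \{-1,0\}$ are trivial since $U_{n,m}^\theta \equiv 0$: the zero function is measurable, generates the trivial $\sigma$-algebra, and the distributional/independence claims for the inner summands of \eqref{t27} are vacuous.

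For the inductive step of \cref{b12}, I would note that the right-hand side of \eqref{t27} is a finite sum of compositions of the measurable maps $f, g$ with the measurable random fields $\approximationX^{m^\ell,(\theta,\ell,i),x}_{s,t}$, $\approximationX^{m^{\ell-1},(\theta,\ell,i),x}_{s,t}$, the random variables $\unif^{(\theta,\ell,i)}$, and, by the induction hypothesis applied at levels $\ell \leq n-1$, with the measurable fields $U_{\ell,m}^{(\theta,\ell,i)}$ and $U_{\ell-1,m}^{(\theta,\ell,-i)}$; hence $U_{n,m}^\theta$ is measurable. Inspecting \eqref{t27} and unwinding the recursion then gives \cref{b10b}: every elementary random variable entering $U_{n,m}^\theta$ carries a multi-index starting with $\theta$ and of level strictly less than $n$.

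For \cref{a18} I would argue via the following key bookkeeping observation: for fixed $\theta \in \Theta$, as $(\ell,i)$ ranges over $\N_0 \times \N$, the multi-indices $(\theta,\ell,i)$ and $(\theta,\ell,-i)$ are pairwise distinct, and the corresponding ``subtrees'' of indices of the form $(\theta,\ell,\pm i,\nu)$ for $\nu \in \Theta$ are pairwise disjoint (and disjoint from the singletons themselves). Combining this with \cref{b10b} and the hypothesis that $(\unif^\vartheta)_{\vartheta \in \Theta}$ and $(\approximationX^{n,\vartheta,x}_{s,t})_{n,\vartheta,s,t,x}$ are jointly independent i.i.d.\ families, one concludes the independence of the four objects in \cref{a18} by pushing forward independence through measurable maps (each random field listed is a measurable function of the elementary variables in its own subtree). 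Then \cref{b10} follows because for any $\theta \in \Theta$ the random field $U_{n,m}^\theta$ is constructed from i.i.d.\ copies of $(\unif,\approximationX)$ via the same deterministic recipe (this is a standard pushforward argument, using that relabeling $\nu \mapsto (\theta,\nu)$ preserves the joint law of the elementary variables). Finally, \cref{b10c} is an immediate consequence of \cref{a18} together with \cref{b10}: for fixed $\ell$, the $i$-indexed summands in \eqref{t27} are functions of disjoint independent collections of elementary variables, so they are independent; the identical distribution with \eqref{b13} follows from \cref{b10} applied to $U_{\ell,m}^{(\theta,\ell,i)}$ and $U_{\ell,m}^0$ (resp.\ $U_{\ell-1,m}^{(\theta,\ell,-i)}$ and $U_{\ell-1,m}^1$), combined with the identical joint distribution of the elementary variables.

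The only genuinely delicate step is the disjointness bookkeeping needed for \cref{a18}: one must be careful that in \eqref{t27} the forward-diffusion indices $(\theta,\ell,i)$ (appearing both in $\approximationX^{m^\ell,(\theta,\ell,i),\cdot}$ and $\approximationX^{m^{\ell-1},(\theta,\ell,i),\cdot}$) are the same, so the two $\approximationX$-fields must be grouped as a single object in the independence statement (as indeed they are), while the $U$-fields use the distinct indices $(\theta,\ell,i)$ and $(\theta,\ell,-i)$, whose descendant subtrees $\{(\theta,\ell,\pm i,\nu):\nu \in \Theta\}$ are disjoint from each other and from the three singletons $(\theta,\ell,i)$, $(\theta,\ell,-i)$, and their $(\ell',i')\neq(\ell,i)$ counterparts. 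Once this combinatorial structure is made explicit, the rest is a routine application of the independence-preservation lemma for measurable maps.
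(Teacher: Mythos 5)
Your proposal is correct and follows essentially the same route as the paper's (very terse) proof: induction on $n$ with the measurability assumptions for items (i)--(ii), disjointness of the index subtrees of $\Theta$ plus the independence assumptions for (iii), and the recursion together with the i.i.d./relabeling structure for (iv)--(v). The only cosmetic difference is that the paper channels the distributional claims (iv)--(v) through the disintegration theorem of \cite{HJKNW2018}, while you argue via the identical joint law of the elementary variables under the injective relabeling of indices; both are standard and your bookkeeping of the $(\theta,\ell,\pm i)$ subtrees, including grouping the two Euler fields with the common index $(\theta,\ell,i)$ as one object, matches the paper's intent.
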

\begin{proof}
[Proof of \cref{b11}]
The assumptions on measurability and distributions, basic properties of measurable functions,
and induction prove \eqref{b12} and \eqref{b10b}.
Next, \eqref{b10b} and the assumptions on independence
prove \eqref{a18}. Next,
\eqref{a18},
 the fact that
$\forall\,\theta\in\Theta,m\in\N\colon U_{0,m}^\theta=0$, 
\eqref{t27}, the disintegration theorem (see, e.g., \cite[Lemma 2.2]{HJKNW2018}), 
the assumptions on distributions,
and induction 
show \eqref{b10} and  \eqref{b10c}.
\end{proof}

\begin{proposition}[Error analysis by semi-norms]\label{a02}
Assume \cref{a01}, 
let $\exponentLP\in [2,\infty)$, $\expFirstNorm\in[3,\infty)$, assume that
$ \tfrac{6}{\exponentV}+\tfrac{2}{\exponentX} \leq 1$,
$
  \exponentLP \expFirstNorm\leq \exponentV$ 
 and  
$ \tfrac{2}{\exponentV}+\tfrac{1}{\exponentX}\leq \tfrac{1}{p} $, and
for every random field $H\colon [0,T]\times\R^d \times\Omega\to \R$ 
let 
$\tnorm{H}{1,s}\in[0,\infty]$, $s\in [0,T]$, satisfy for all $s\in [0,T]$ that
\begin{equation}
\label{a14b}
\tnorm{H}{1,s}=  \sup_{t\in[s,T]}
\sup_{x\in\R^d}\frac{\lVert H(t,x)\rVert_{\exponentLP}}{ 
(V(t,x))^{\expFirstNorm}}.
\end{equation}
Then
\begin{enumerate}[(i)]\itemsep0pt
\item \label{a02a}
there exists a unique measurable  $u\colon [0,T]\times\R^d\to\R$ which satisfies for all $t\in[0,T]$, $x\in \R^d$ that
$\E\bigl[\lvert
g(X_{t,T}^{x})\rvert\bigr]+
\int_{t}^{T} \E\bigl[\lvert f( u(r,X_{t,r}^{x}))\rvert
\bigr]\,d r+\sup_{r\in[0,T],\xi\in\R^d}\frac{\lvert u(r,\xi)\rvert}{V(r,\xi)}<\infty$ and
$
u(t,x)=
\E\bigl[
g(X_{t,T}^{x})\bigr]+
\int_{t}^{T} \E\bigl[f( u(r,X_{t,r}^{x}))
\bigr]\,dr,
$
\item \label{a02g} it holds
for all  $n,m\in\N$, $t\in[0,T]$, $x\in\R^d$, $\theta\in\Theta$ that
$ {U}_{n,m}^{\theta}(t,x)
$,
$      g \bigl(\approximationX^{m^{n-1},\theta,x}_{t,T}\bigr)$,
and $
\bigl( f\circ {U}_{n-1,m}^{\theta}\bigr)
\bigl(t+(T-t)\unif^{\theta},\approximationX_{t,t+(T-t)\unif^{\theta}}^{m^{n-1},\theta,x}\bigr)$
are  integrable,
\item  \label{a02f}it holds
for all  $n,m\in\N$, $t\in[0,T]$, $x\in\R^d$ that
\begin{align*}
 & \E \bigl[ {U}_{n,m}^{0}(t,x)\bigr]=
\E\bigl[
      g \bigl(\approximationX^{m^{n-1},0,x}_{t,T}\bigr)\bigr]
+(T-t)
 \E\!\left[\bigl( f\circ {U}_{n-1,m}^{0}\bigr)\!
\left(t+(T-t)\unif^{0},\approximationX_{t,t+(T-t)\unif^{0}}^{m^{n-1},0,x}\right)\right]\\
&\quad\text{and}\quad
u(t,x)= \E\bigl[
      g \bigl(X^{x}_{t,T}\bigr)\bigr]
+(T-t)
 \E\!\left[( f\circ u)
\left(t+(T-t)\unif^{0},X_{t,t+(T-t)\unif^{0}}^{x}\right)\right],
\end{align*}
\item\label{a02d}  it holds for all $n,m\in\N$, $s\in[0,T]$ that
\begin{align}
&\tnorm{{U}_{n,m}^{0}-u}{1,s}
\leq \tfrac{16mne^{3cT}\sqrt{p-1}}{\sqrt{m^n}}
+
\sum_{\ell=0}^{n-1}\left[
\tfrac{4(T-s)^{1-\frac{1}{\exponentLP}} c\sqrt{p-1}}{\sqrt{m^{n-\ell-1}}}
\left[\int_{s}^{T} \tnorm{ {U}_{\ell,m}^{0}- u}{1,\zeta}^{\exponentLP} d\zeta\right]^{1/\exponentLP} 
\right]
,
\end{align}
 and
\item \label{a02e} it holds for all $n,m\in\N$, $s\in[0,T]$ that
 \begin{align}
\tnorm{{U}_{n,m}^{0}-u}{1,s} 
\leq  16 mn (p-1)^{n/2} 
e^{4c(T-s)n}
e^{m^{p/2}/p}m^{-n/2}.
\end{align}
\end{enumerate}
\end{proposition}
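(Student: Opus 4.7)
The plan is to prove \eqref{a02a}--\eqref{a02e} in order, relying on the distributional analysis of \Cref{b11} and the regularity framework of \Cref{sec2}. For \eqref{a02a} I would invoke \cite[Lemma~3.1]{hutzenthaler2021overcoming} directly: the assumptions \eqref{a14}, \eqref{a14c}, the flow and independence identities \eqref{d01c}, and the Lyapunov-type estimates \eqref{k01} together guarantee a unique measurable fixed-point $u$ of polynomial growth in $V$. For \eqref{a02g}, integrability propagates by induction on $n$: $U_{0,m}^\theta\equiv 0$ is trivially integrable, and at step $n$ each summand of \eqref{t27} is controlled in $L^1$ by the Lipschitz bound on $f$, the growth bound on $g$ from \eqref{a14}, and the moment estimate on $V(t,\approximationX_{s,t}^{n,0,x})$ from \eqref{k04b}. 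For \eqref{a02f}, taking expectations in \eqref{t27}, using the identical-distribution statement of \Cref{b11}, and telescoping over $\ell$ collapses the nested sum to a single-level formula; the corresponding formula for $u$ follows from \eqref{a02a}, the disintegration theorem, and the fact that $\unif^0$ is uniform on $[0,1]$.

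The core of the argument is \eqref{a02d}. I would subtract the expectation formula of \eqref{a02f} from the MLP recursion \eqref{t27} to write $U_{n,m}^0(t,x)-u(t,x)$ as a sum over levels $\ell\in\{0,\ldots,n-1\}$ of three ingredients: a Monte Carlo fluctuation of the $g$-part, a Monte Carlo fluctuation of the $f$-part, and a deterministic bias from replacing the exact diffusion $X$ by the Euler approximation $\approximationX$. By \Cref{b11} the $m^{n-\ell}$ summands at level $\ell$ are i.i.d.\ and, once their mean is subtracted, centered, so a Marcinkiewicz-Zygmund-type inequality yields a factor $\sqrt{p-1}/\sqrt{m^{n-\ell}}$ times the $L^\exponentLP$-norm of a single summand. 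At level $\ell\ge 1$ this single $f$-summand is $f\circ U_{\ell,m}-f\circ U_{\ell-1,m}$, which by the Lipschitz bound \eqref{a14c} is dominated by $(U_{\ell,m}-u)+(U_{\ell-1,m}-u)$, generating the integral-in-$\zeta$ term in \eqref{a02d}. The $g$-part and the Euler bias, combined with the Lipschitz estimate on $g$ from \eqref{a14c} and the Euler bound \eqref{k04b}, produce the $16mne^{3cT}\sqrt{p-1}/\sqrt{m^n}$ term. The exponent conditions $\tfrac{6}{\exponentV}+\tfrac{2}{\exponentX}\leq 1$, $\exponentLP\expFirstNorm\leq \exponentV$, and $\tfrac{2}{\exponentV}+\tfrac{1}{\exponentX}\leq \tfrac{1}{\exponentLP}$ are used via H\"older's inequality to reabsorb the powers of $V(t,\approximationX)$ that appear after Lipschitz continuity into $V(t,x)^{\expFirstNorm}$ through \eqref{k04b}, so that the resulting bound is compatible with the semi-norm \eqref{a14b}.

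Item \eqref{a02e} then follows from \eqref{a02d} by induction on $n$: feeding the level-$\ell$ bound into the recursion, integrating in $\zeta$, and collecting the exponential and combinatorial factors produces the stated closed form, where $(p-1)^{n/2}e^{4c(T-s)n}$ encodes the $n$-fold composition of the Monte Carlo and Lipschitz constants and $e^{m^{\exponentLP/2}/\exponentLP}$ is a convenient upper bound for the Taylor-type sum that arises. The principal obstacle I anticipate is the bookkeeping in \eqref{a02d}: the semi-norm \eqref{a14b} rescales by $V(t,x)^{\expFirstNorm}$, but the $L^\exponentLP$-norms of the individual Monte Carlo summands naturally produce powers of $V(t,\approximationX)$ together with a factor coming from the Lipschitz estimate on $g$ or $f\circ u$, and aligning the exponent budget so that all of these fit into a single $V^{\expFirstNorm}$-scaled bound while still giving the desired Monte Carlo rate $1/\sqrt{m^{n-\ell}}$ and preserving the integrability over $\zeta$ on the right-hand side is the delicate part of the calculation.
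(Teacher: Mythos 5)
Your plan follows the paper's proof essentially step for step: item (i) via \cite[Lemma~3.1]{hutzenthaler2021overcoming} (the paper additionally transfers the Lyapunov bound from the Euler scheme to $X$ by Fatou's lemma before applying that lemma), (ii) by induction on $n$, (iii) by telescoping and disintegration, (iv) by the bias--variance split with the Marcinkiewicz--Zygmund inequality, H\"older's inequality with exactly the stated exponent budget, and the substitution $t+(T-t)\unif^0\mapsto\zeta$ that turns the randomized time into the integral over $[s,T]$, and (v) by the Gronwall-type treatment of the resulting recursion, which the paper delegates to \cite[Lemma~3.11]{HJKN20} rather than redoing the induction by hand. This is the same approach as the paper's, so nothing further needs to be compared.
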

\begin{proof}
[Proof of \cref{a02}]\sloppy
Throughout this proof for every 
random variable $\mathfrak{X}\colon \Omega\to \R$ with $\E [\lvert \mathfrak{X}\rvert]<\infty$ let $\var_{\exponentLP}(\mathfrak{X})\in[0,\infty]$ satisfy that
$
\var_{\exponentLP}(\mathfrak{X})=\lVert\mathfrak{X}-\E[\mathfrak{X}]\rVert_{\exponentLP}^2.
$
First, \eqref{k04b} and Fatou's lemma yield for all 
$s\in[0,T]$,
$t\in [s,T]$, $x\in \R^d$ that $(\approximationX_{s,t}^{n,0,x})_{n\in\N}$ converges to $X_{s,t}^{x}$ in probability and
\begin{align}
\left \lVert V(t,X_{s,t}^{x})\right\rVert_{\exponentV}
= 
\left \lVert\operatornamewithlimits{\P\text{-}\lim}_{n\to\infty} V(t,\approximationX_{s,t}^{n,0,x})\right\rVert_{\exponentV}\leq 
 \liminf_{n\to\infty}
\left \lVert V(t,\approximationX_{s,t}^{n,0,x})\right\rVert_{\exponentV}\leq\xeqref{k04b} V(s,x).
\label{t21}
\end{align}
This,
\cite[Lemma~3.1]{hutzenthaler2021overcoming} (applied with 
$L\gets c$,
$p_3\gets\exponentV  $, 
$f\gets ([0,T]\times \R^d\times \R \ni (t,x,w) \mapsto f(w)\in\R)$,
$\phi\gets V$, $\psi \gets V$
in the notation of \cite[Lemma~3.1]{hutzenthaler2021overcoming}), the fact that
$\frac{2}{\exponentV}+\frac{1}{\exponentX}\leq 1$, 
the assumptions on measurability and distributional properties, 
\eqref{a14}, \eqref{a14c}, \eqref{d01c}, \eqref{k01},   \eqref{a14b}, 
and the fact that
$1\leq V\leq V^{\expFirstNorm}$
 prove \eqref{a02a} and show for all
$t\in[0,T]$,
$x,y\in\R^d$ 
that
 \begin{equation}\begin{split}
&\lvert u(t,x)\rvert\leq 2e^{c(T-t)}V(t,x),\quad
\tnorm{u}{1,t}\leq 2e^{c(T-t)},\\
&\quad\text{and}\quad
\lvert u(t,x)-u(t,y)\rvert\leq 4 e^{2 c T}\left(\tfrac{V(t,x)+V(t,y)}{2}\right)^2
\tfrac{\lVert x-y\rVert}{\sqrt{T}}.\label{t04}
\end{split}\end{equation}
Next,  
\eqref{a14},  Jensen' inequality,  the fact that $ \exponentLP\leq \exponentV$, \eqref{k04b}, and the fact that $V\leq V^{\expFirstNorm}$ show for all
$n\in\N$,  $t\in[0,T]$,
 $x\in\R^d$ that
\begin{equation}\begin{split}
&\bigl\lVert
 g \bigl(\approximationX^{n,0,x}_{t,T}\bigr)\bigr\rVert_{\exponentLP}
\leq \xeqref{a14}
\bigl\lVert
V \bigl(T,\approximationX^{n,0,x}_{t,T}\bigr)\bigr\rVert_{\exponentLP}
\leq 
\bigl\lVert
V \bigl(T,\approximationX^{n,0,x}_{t,T}\bigr)\bigr\rVert_{\exponentV}\leq\xeqref{k04b} 
V(t,x)\leq (V(t,x))^{\expFirstNorm}.
\end{split}\label{a16}\end{equation}
In addition, \eqref{a14c},
H\"older's inequality, the fact that $\tfrac{1}{\exponentV}+\tfrac{1}{\exponentX}\leq \tfrac{1}{\exponentLP}$,
\eqref{k04b}, 
\eqref{t21},
and the fact that $V^2\leq V^{\expFirstNorm}$ prove for all 
$t\in[0,T]$,
  $n\in\N$, $x\in\R^d$ that
\begin{align}
&
\bigl\lVert
g\bigl(\approximationX_{t,T}^{n,0,x}\bigr) 
-g\bigl(X_{t,T}^{x}\bigr)\bigr\rVert_{\exponentLP}\nonumber \\
&
\leq\xeqref{a14c} \left\lVert 
\tfrac{V( T,\approximationX_{t, T}^{n,0,x})+
V(X_{t, T}^{x})
}{2}
\tfrac{\left\lVert \approximationX_{t, T}^{n,0,x}-X_{t, T}^{x}\right\rVert}{\sqrt{T}}\right\rVert_{\exponentLP}\leq 
\left\lVert 
\tfrac{V( T,\approximationX_{t, T}^{n,0,x})+
V(X_{t, T}^{x})
}{2}\right\rVert_{\exponentV}\left\lVert 
\tfrac{\left\lVert \approximationX_{t, T}^{n,0,x}-X_{t, T}^{x}\right\rVert}{\sqrt{T}}\right\rVert_{\exponentX}\nonumber \\
&\leq \xeqref{k04b}V(t,x)\cdot\xeqref{t21} \tfrac{V(t,x)}{\sqrt{n}}
 \leq 
\tfrac{  (V(t,x))^{\expFirstNorm}  }{\sqrt{n}}
.\label{a50}  
\end{align}
This and the triangle inequality 
show for all  $t\in[0,T]$,
$\ell,m\in\N$, $x\in\R^d$ that
\begin{align}\label{a30}
 &\left\lVert g \bigl(\approximationX^{m^\ell,0,x}_{t,T}\bigr)-
g \bigl(\approximationX^{m^{\ell-1},0,x}_{t,T}\bigr)\right\rVert_{\exponentLP}
\leq {\sum_{j=\ell-1}^{\ell}}
\left\lVert g \bigl(\approximationX^{m^j,0,x}_{t,T}\bigr)-
g \bigl(X^{x}_{t,T}\bigr)\right\rVert_{\exponentLP}
\leq 
{\sum\limits_{j=\ell-1}^{\ell}}
\tfrac{ (V(t,x))^{\expFirstNorm} }{\sqrt{m^j}}.
\end{align}
Next, the disintegration theorem (see, e.g., \cite[Lemma 2.2]{HJKNW2018}),
the assumptions on measurability and independence,  Jensen's inequality,  the fact that
$\exponentLP \expFirstNorm\leq \exponentV$, and \eqref{k04b}
 prove for all 
$t\in[0,T]$, $\ell\in\N_0$, $m,n\in\N$, $x\in\R^d$,
$\nu\in\Z$,
$H \in \mathrm{span}_{\R}( \{ f\circ {U}_{\ell,m}^{(0,\nu)}, f\circ u\})$
 that
\begin{equation}\begin{split}
&\left\lVert (T-t)H
\bigl(t+(T-t)\unif^{0},\approximationX_{t,t+(T-t)\unif^{0}}^{n,0,x}\bigr)\right\rVert_{\exponentLP}= (T-t)\left\lVert \left\lVert \left\lVert H
\bigl(r,y\bigr)\right\rVert_{\exponentLP}
\bigr|_{ y=\approximationX_{t,r}^{n,0,x} }\right\rVert
\bigr|_{r=t+(T-t)\unif^{0}}
\right\rVert_{\exponentLP} 
\\
&\leq (T-t)
\left\lVert 
\biggl[
\tnorm{H}{1,r}\left\lVert 
\bigl(V\bigl(r,\approximationX_{t,r}^{n,0,x} \bigr)\bigr)^{ \expFirstNorm }\right\rVert_{\exponentLP}\biggr]
\bigr|_{r=t+(T-t)\unif^{0}} \right\rVert_{\exponentLP}\\
&\leq (T-t)
\left\lVert 
\tnorm{H}{1,t+(T-t)\unif^{0}}
\right\rVert_{\exponentLP}\xeqref{k04b}(V(t,x))^{\expFirstNorm} 
.\label{a15}\end{split}
\end{equation}
Moreover, \eqref{a14c}  and \eqref{a14b} show for all $t\in [0,T]$ and all random fields $H,K\colon [0,T]\times\R^d\times\Omega\to\R $ that 
$\tnorm{f\circ H-f\circ K}{1,t}\leq c \tnorm{ H-K}{1,t} $. 
This, \eqref{a15}, and the independence and distributional properties
show for all
$t\in[0,T]$, $\nu,\ell\in\N_0$, $m,n\in\N$, $x\in\R^d$
that
\begin{align}\begin{split}
&
\frac{1}{(V(t,x))^{\expFirstNorm} }
\left\lVert (T-t)\bigl((f\circ {U}_{\ell,m}^{\nu})- (f\circ u)\bigr)
\bigl(t+(T-t)\unif^{0},\approximationX_{t,t+(T-t)\unif^{0}}^{n,0,x}\bigr)\right\rVert_{\exponentLP} 
\\
&\leq \xeqref{a15}(T-t)\Bigl\lVert\tnorm{(f\circ {U}_{\ell,m}^{\nu})-(f\circ u)}{1,t+(T-t)\unif^{0}}\Bigr\rVert_{\exponentLP}
\leq (T-t)c\Bigl\lVert\tnorm{ {U}_{\ell,m}^{\nu}- u}{1,t+(T-t)\unif^{0}}\Bigr\rVert_{\exponentLP}\\
&=(T-t)c\Bigl\lVert\tnorm{ {U}_{\ell,m}^{0}- u}{1,t+(T-t)\unif^{0}}\Bigr\rVert_{\exponentLP}
.\label{a15b}  \end{split}
\end{align}%
Next, 
\eqref{a14c},
\eqref{t04},
H\"older's inequality, the fact that 
$\tfrac{2}{\exponentV}+\tfrac{1}{\exponentX}\leq \tfrac{1}{p} $,
 \eqref{k04b}, 
\eqref{t21},
and
the fact that $ V^3\leq V^{\expFirstNorm}$
 prove for all $t\in [0,T]$,
$r\in[t,T]$, $x,y\in\R^d$, 
$n\in\N$ that 
\begin{align}\begin{split}
&\left\lVert 
(f\circ u)\bigl(r,\approximationX_{t,r}^{n,0,x}\bigr) 
-(f\circ u)\bigl(r,X_{t,r}^{x}\bigr)\right\rVert_{\exponentLP}\leq
\xeqref{a14c}
c
\left\lVert 
u\bigl(r,\approximationX_{t,r}^{n,0,x}\bigr) 
-u\bigl(r,X_{t,r}^{x}\bigr)\right\rVert_{\exponentLP} 
\\
&\leq \xeqref{t04}
4c e^{2 c T}
\left\lVert \left(\tfrac{
V(r,
\approximationX_{t,r}^{n,0,x})+
V(r,X_{t,r}^{x})}{{2}}\right)^2
\tfrac{\left\lVert 
\approximationX_{t,r}^{n,0,x}
-X_{t,r}^{x}\right\rVert}{\sqrt{T}}\right\rVert_{\exponentLP}
\\
&\leq 
4c e^{2 c T}
\left\lVert \tfrac{
V(r,
\approximationX_{t,r}^{n,0,x})+
V(r,X_{t,r}^{x})}{{2}}\right\rVert_{\exponentV}^2
\tfrac{\bigl \lVert\approximationX_{t,r}^{n,0,x}-X_{t,r}^{x}\bigr\rVert_{\exponentX}}{\sqrt{T}}
\\
&\leq  4c e^{2 c T}\xeqref{k04b}\xeqref{t21}
(V(t,x))^2 \cdot \xeqref{k04b}\tfrac{V(t,x)}{\sqrt{n}}\leq   
 4c e^{2 c T} \tfrac{(V(t,x))^{\expFirstNorm}}{\sqrt{n}}.
\end{split}
\label{a15c}  
\end{align}
 This, the triangle inequality, and
 \eqref{a15b} show for all
 $\ell,m\in\N$,
$t\in[0,T]$,
$x\in\R^d$
that 
\begin{align}
&  \left\lVert  (T-t)\left[ \bigl( f\circ {U}_{\ell,m}^{0}\bigr)\!
\left(t+(T-t)\unif^{0},\approximationX_{t,t+(T-t)\unif^{0}}^{m^\ell,0,x}\right) 
-
\bigl(f\circ  {U}_{\ell-1,m}^{1}\bigr)\!
    \left(t+(T-t)\unif^{0},\approximationX_{t,t+(T-t)\unif^{0}}^{m^{\ell-1},0,x}   \right)\right]\right\rVert_{\exponentLP}\nonumber 
\\
&\leq
\biggl[
(T-t)\left\lVert \bigl( (f\circ {U}_{\ell,m}^{0})-(f\circ u)\bigr)\!
\left(t+(T-t)\unif^{0},\approximationX_{t,t+(T-t)\unif^{0}}^{m^\ell,0,x}\right)\right\rVert _{\exponentLP} \nonumber \\
&\qquad+ 
(T-t)
\left\lVert \bigl( (f\circ {U}_{\ell-1,m}^{1})-(f\circ u)\bigr)\!
\left(t+(T-t)\unif^{0},\approximationX_{t,t+(T-t)\unif^{0}}^{m^{\ell-1},0,x}\right)\right\rVert _{\exponentLP}\nonumber 
\\
&\qquad+(T-t)
\left\lVert 
(f\circ u)\left(t+(T-t)\unif^{0},\approximationX_{t,t+(T-t)\unif^{0}}^{m^\ell,0,x}\right) 
-(f\circ u)\left(t+(T-t)\unif^{0},X_{t,t+(T-t)\unif^{0}}^{x}\right)
\right\rVert_{\exponentLP}\biggr]\nonumber \\
&\qquad+(T-t)
\left\lVert 
(f\circ u)\left(t+(T-t)\unif^{0},\approximationX_{t,t+(T-t)\unif^{0}}^{m^{\ell-1},0,x}\right) 
-(f\circ u)\left(t+(T-t)\unif^{0},X_{t,t+(T-t)\unif^{0}}^{x}\right)
\right\rVert_{\exponentLP}\biggr]\nonumber 
\\
&\leq \left[\sum_{j=\ell-1}^{\ell}\left[ \xeqref{a15b}(T-t) c \left\lVert \tnorm{ {U}_{j,m}^{0}- u}{1,t+(T-t)\unif^{0}}\right\rVert_{\exponentLP}
+\xeqref{a15c}
 \tfrac{4  c(T-t) e^{2 c T}}{\sqrt{m^j}}
 \right]\right](V(t,x))^{\expFirstNorm}.\label{a22}
\end{align}
This, 
\eqref{t27}, the triangle inequality,
the fact that $\forall\,m\in \N\colon U_{0,m}^0=0$,
the independence and distributional properties,
\eqref{a16}, \eqref{a30}, \eqref{t04}, and induction prove for all $n\in\N_0$, $m\in\N$, $x\in\R^d$,
$t\in[0,T]$, $\theta\in\Theta$ that
$
\tnorm{U^{\theta}_{n,m}}{1,t}+\bigl\lVert(T-t)(f\circ {U}_{n,m}^{\theta})
\bigl(t+(T-t)\unif^{\theta},\approximationX_{t,t+(T-t)\unif^{\theta}}^{m^n,\theta,x}\bigr)\bigr\rVert_{\exponentLP} <\infty.
$
This, \eqref{a14b}, and \eqref{a16} establish  \eqref{a02g}.

Next, \eqref{t27}, 
linearity,  the independence and distributional properties, and a telescoping sum argument  prove for all  $n,m\in\N$, $t\in[0,T]$, $x\in\R^d$ that
\begin{align}
& \E \bigl[ {U}_{n,m}^{0}(t,x)\bigr]=
  \sum_{\ell=0}^{n-1}\Biggl[\frac{1}{m^{n-\ell}}
    \sum_{i=1}^{m^{n-\ell}}
\biggl(
    \E\!\left[  g \bigl(\approximationX^{m^\ell,(0,\ell,i),x}_{t,T}\bigr)\right]
-\1_{\N}(\ell)\E\!\left[
g \bigl(\approximationX^{m^{\ell-1},(0,\ell,i),x}_{t,T}\bigr)\right]\nonumber \\
&\qquad\qquad+(T-t)\E\!\left[
     \bigl( f\circ {U}_{\ell,m}^{(0,\ell,i)}\bigr)\!
\left(t+(T-t)\unif^{(0,\ell,i)},\approximationX_{t,t+(T-t)\unif^{(0,\ell,i)}}^{m^\ell,(0,\ell,i),x}\right)\right]\nonumber \\
&\qquad\qquad-\1_{\N}(\ell)(T-t)\E\!\left[
\bigl(f\circ {U}_{\ell-1,m}^{(0,\ell,-i)}\bigr)\!
    \left(t+(T-t)\unif^{(0,\ell,i)},\approximationX_{t,t+(T-t)\unif^{(0,\ell,i)}}^{m^{\ell-1},(0,\ell,i),x}   \right) \right]\biggr)\Biggr]\nonumber 
\\
&=
  \sum_{\ell=0}^{n-1}\Biggl[\frac{1}{m^{n-\ell}}
    \sum_{i=1}^{m^{n-\ell}}
\biggl(
    \E\!\left[  g \bigl(\approximationX^{m^\ell,0,x}_{t,T}\bigr)\right]
-\1_{\N}(\ell)\E\!\left[
g \bigl(\approximationX^{m^{\ell-1},0,x}_{t,T}\bigr)\right]\nonumber \\
&\qquad\qquad +(T-t)\E\!\left[
     \bigl( f\circ {U}_{\ell,m}^{0}\bigr)\!
\left(t+(T-t)\unif^{0},\approximationX_{t,t+(T-t)\unif^{0}}^{m^\ell,0,x}\right)\right]\nonumber \\
&\qquad\qquad-\1_{\N}(\ell)(T-t)\E\!\left[
\bigl(f\circ  {U}_{\ell-1,m}^{1}\bigr)\!
    \left(t+(T-t)\unif^{0},\approximationX_{t,t+(T-t)\unif^{0}}^{m^{\ell-1},0,x}   \right) \right]\biggr)\Biggr]\nonumber 
\\
&=\E\bigl[
      g \bigl(\approximationX^{m^{n-1},0,x}_{t,T}\bigr)\bigr]
+(T-t)
 \E\!\left[\bigl( f\circ {U}_{n-1,m}^{0}\bigr)\!
\left(t+(T-t)\unif^{0},\approximationX_{t,t+(T-t)\unif^{0}}^{m^{n-1},0,x}\right)\right]
  .\label{a25} 
\end{align}
Moreover, \eqref{a02a},  the disintegration theorem (see, e.g., \cite[Lemma 2.2]{HJKNW2018}), and the independence and distributional properties show for all $t\in[0,T]$, $x\in\R^d$ that
\begin{equation}
u(t,x)= \E\bigl[
      g \bigl(X^{x}_{t,T}\bigr)\bigr]
+(T-t)
 \E\bigl[( f\circ u)
(t+(T-t)\unif^{0},X_{t,t+(T-t)\unif^{0}}^{x})\bigr].\label{a25c}
\end{equation}
This and \eqref{a25} show \eqref{a02f}.

Next, \eqref{a25} and \eqref{a25c} prove for all $n,m\in\N$, $t\in[0,T]$, $x\in\R^d$
 that
\begin{align}
&\E \bigl[  {U}_{n,m}^{0}(t,x)\bigr]-u(t,x)\nonumber 
=\E\bigl[
      g \bigl(\approximationX^{m^{n-1},0,x}_{t,T}\bigr)-g \bigl(X^{x}_{t,T}\bigr)\bigr]\\
&
+(T-t)
 \E\!\left[\bigl((f\circ {U}_{n-1,m}^{0})-(f\circ u)\bigr)\!
\left(t+(T-t)\unif^{0},\approximationX_{t,t+(T-t)\unif^{0}}^{m^{n-1},0,x}\right)\right]
\nonumber \\
&+(T-t)
 \E\!\left[(f\circ u)\!
\left(t+(T-t)\unif^{0},\approximationX_{t,t+(T-t)\unif^{0}}^{m^{n-1},0,x}\right)
-
( f\circ u)\!
\left(t+(T-t)\unif^{0},X_{t,t+(T-t)\unif^{0}}^{x}\right)\right].\label{a25b}
\end{align}
This, the triangle inequality, Jensen's inequality,
\eqref{a50},
\eqref{a15b}, 
\eqref{a15c}, and  the fact that
$ \forall\,t\in [0,T]\colon 4c(T-t)e^{2c T}+1\leq 4e^{2c T}(1+c(T-t))\leq 4e^{3cT}$
 prove for all  $n, m\in\N$, $t\in[0,T]$, $x\in\R^d$ that
\begin{align}\label{a23}
&\frac{\bigl\lvert\E \bigl[  {U}_{n, m}^{0}(t,x)\bigr]-u(t,x)\bigr\rvert}{(V(t,x))^{\expFirstNorm}}
\leq 
 (T-t)c\left\lVert \tnorm{ {U}_{n-1,m}^{0}- u}{1,t+(T-t)\unif^{0}}\right\rVert_{\exponentLP}
+\tfrac{4e^{3cT}}{\sqrt{m^{n-1}}}
.
\end{align}
Moreover, 
the Marcinkiewicz-Zygmund inequality (see \cite[Theorem~2.1]{Rio09}), the fact that $\exponentLP\in [2,\infty)$, the triangle inequality, and  Jensen's inequality show that for all $n\in\N$ and all i.i.d.\ random variables $\mathfrak{X}_k$, $k\in [1,n]\cap\Z$, with $\E [\lvert\mathfrak{X}_1\rvert]<\infty$ it holds that
$
(\var_{\exponentLP}(\sum_{k=1}^{n}\mathfrak{X}_k))^{1/2}=
\left\lVert \sum_{k=1}^{n}(\mathfrak{X}_k-\E[\mathfrak{X}_k])\right\rVert_{\exponentLP}\leq (\exponentLP-1)^{1/2}
(\sum_{k=1}^{n}\left\lVert \mathfrak{X}_k-\E[\mathfrak{X}_k]\right\rVert_p^2)^{1/2}\leq  2(\exponentLP-1)^{1/2}\lVert \mathfrak{X}_1\rVert_{\exponentLP}/\sqrt{n}
$.
This,
\eqref{t27},
the triangle inequality,  the properties on independence and distributions,  \eqref{a16},
\eqref{a30}, 
\eqref{a22}, and the fact that
$\forall\,t\in[0,T]\colon 4c(T-t)e^{2c T}+1\leq 4e^{2cT}(1+c(T-t))\leq 4e^{3cT}$
 prove
 for all  $n,m\in\N$, $t\in[0,T]$, $x\in\R^d$
that
\begin{align}
&  \tfrac{1}{(V(t,x))^{\expFirstNorm}}
\left \lVert {U}_{n,m}^{0}(t,x) -\E \bigl[{U}_{n,m}^{0}(t,x)\bigr]\right\rVert_{\exponentLP}
=\tfrac{1}{(V(t,x))^{\expFirstNorm}}\left(\var_{\exponentLP}({U}_{n,m}^{0}(t,x))\right)^{\nicefrac{1}{2}}
\nonumber \\
&
\leq\xeqref{t27} \tfrac{1}{(V(t,x))^{\expFirstNorm}} \sum_{\ell=0}^{n-1}\left(\var_{\exponentLP}\! \left[\tfrac{1}{m^{n-\ell}}
    \sum_{i=1}^{m^{n-\ell}}
g \bigl(\approximationX^{m^{\ell},(0,\ell,i),x}_{t,T}\bigr)-\1_{\N}(\ell)
g \bigl(\approximationX^{m^{\ell-1},(0,\ell,i),x}_{t,T}\bigr)\right]\right)^{\nicefrac{1}{2}}
\nonumber \\
&  \quad 
+\tfrac{1}{(V(t,x))^{\expFirstNorm}} \sum_{\ell=0}^{n-1}\Biggl(\var_{\exponentLP} \Biggl[
\tfrac{1}{m^{n-\ell}}
    \sum_{i=1}^{m^{n-\ell}}
\biggl[    
 (T-t)
     \bigl(  f\circ {U}_{\ell,m}^{(0,\ell,i)}\bigr)
\left(t+(T-t)\unif^{(0,\ell,i)},\approximationX_{t,t+(T-t)\unif^{(0,\ell,i)}}^{m^{\ell},(0,\ell,i),x}\right)\nonumber \\
&\qquad\qquad\qquad\qquad-\1_{\N}(\ell)(T-t)
\bigl( f\circ  {U}_{\ell-1,m}^{(0,\ell,-i)}\bigr)
    \left(t+(T-t)\unif^{(0,\ell,i)},\approximationX_{t,t+(T-t)\unif^{(0,\ell,i)}}^{m^{\ell-1},(0,\ell,i),x}   \right) \biggr]\Biggr]
\Biggr)^{\nicefrac{1}{2}}
  \nonumber 
\\
&\leq \tfrac{1}{(V(t,x))^{\expFirstNorm}}
\tfrac{2\sqrt{\exponentLP-1}}{\sqrt{m^n}}
\left\lVert
g \bigl(\approximationX^{1,0,x}_{t,T}\bigr)\right\rVert_{\exponentLP}+\tfrac{1}{(V(t,x))^{\expFirstNorm}}
\sum_{\ell=1}^{n-1}\left[\tfrac{2\sqrt{\exponentLP-1}}{\sqrt{m^{n-\ell}}}
\left\lVert
g \bigl(\approximationX^{m^{\ell},0,x}_{t,T}\bigr)-
g \bigl(\approximationX^{m^{\ell-1},0,x}_{t,T}\bigr)
\right\rVert_{\exponentLP}\right]\nonumber \\
&\quad+\tfrac{1}{(V(t,x))^{\expFirstNorm}}
\sum_{\ell=1}^{n-1}\Biggl[
\tfrac{2\sqrt{\exponentLP-1}}{\sqrt{m^{n-\ell}}}
\Bigl\lVert
 (T-t)
     \bigl(  f\circ {U}_{\ell,m}^{0}\bigr)
\left(t+(T-t)\unif^{0},\approximationX_{t,t+(T-t)\unif^{0}}^{m^{\ell},0,x}\right)\nonumber \\
&\qquad\qquad\qquad\qquad\qquad\qquad \qquad
-(T-t)
\bigl( f\circ  {U}_{\ell-1,m}^{1}\bigr)
    \left(t+(T-t)\unif^{0},\approximationX_{t,t+(T-t)\unif^{0}}^{m^{\ell-1},0,x}   \right) \Bigr\rVert_{\exponentLP}
\Biggr]
  \nonumber  
\\
&\leq\xeqref{a16}
\tfrac{2\sqrt{\exponentLP-1}}{\sqrt{m^n}}
+\sum_{\ell=1}^{n-1}\xeqref{a30}\sum_{j=\ell-1}^{\ell}\tfrac{2\sqrt{p-1}}{\sqrt{m^j}}
\nonumber\\
&\qquad\qquad\qquad+
\sum_{\ell=1}^{n-1}\sum_{j=\ell-1}^{\ell}\left[
\tfrac{2\sqrt{\exponentLP-1}}{\sqrt{m^{n-\ell}}}
\left(
\xeqref{a22} (T-t)c\left\lVert\tnorm{ {U}_{j,m}^{0}- u}{1,t+(T-t)\unif^{0}}\right\rVert_{\exponentLP}+
\tfrac{4c(T-t)e^{2cT}}{\sqrt{m^j}}
\right)
 \right]
\nonumber\\
&\leq 
\tfrac{2\sqrt{\exponentLP-1}}{\sqrt{m^n}}
+
\sum_{\ell=1}^{n-1}\sum_{j=\ell-1}^{\ell}\left[
\tfrac{2\sqrt{\exponentLP-1}}{\sqrt{m^{n-\ell}}}
\left(
 (T-t)c\left\lVert\tnorm{ {U}_{j,m}^{0}- u}{1,t+(T-t)\unif^{0}}\right\rVert_{\exponentLP}
+\tfrac{ 4e^{3cT} }{\sqrt{m^j}} 
 \right)\right].
\label{a28}
\end{align}
In addition,
the fact that $\unif^0$ is uniformly distributed on $[0,1]$ and  the substitution rule imply for all
$s\in[0,T]$, $t\in [0,T]$, and all measurable $h\colon [0,T]\to \R$ that
\begin{align}\label{b04}
&\textstyle(T-t)\left\lVert h(t+(T-t)\unif^0)\right\rVert_{\exponentLP}
= (T-t)^{1-\frac{1}{\exponentLP}}\left[\int_{0}^{1}(T-t)
\lvert h(t+(T-t)\lambda)\rvert^{\exponentLP}\,d\lambda\right]^{\frac{1}{\exponentLP}}\nonumber \\
&= \textstyle
 (T-t)^{1-\frac{1}{\exponentLP}}
\left[\int_{t}^{T}
\lvert h(\zeta)\rvert^{\exponentLP}\,d\zeta\right]^{\frac{1}{\exponentLP}}\leq 
 (T-s)^{1-\frac{1}{\exponentLP}}
\left[\int_{s}^{T}
\lvert h(\zeta)\rvert^{\exponentLP}\,d\zeta\right]^{\frac{1}{\exponentLP}} 
.
\end{align}
This, \eqref{a14b}, the triangle inequality, \eqref{a28},
\eqref{a23}, 
the fact that $\forall\,n,m\in \N$, $a_0,a_1,\ldots,a_{n-1}\in[0,\infty)\colon 
(
\sum_{\ell=1}^{n-1}\sum_{j=\ell-1}^\ell\frac{ a_j}{\sqrt{ m^{n-\ell}}}
)
+a_{n-1}
\leq \sum_{\ell=0}^{n-1}\frac{2a_\ell}{\sqrt{ m^{n-\ell-1}}}
$,
and the fact that
$ \forall\,m,n\in \N, t\in [0,T]\colon \tfrac{2}{\sqrt{m^n}} +(n-1)\frac{4\cdot 4e^{3cT}}{\sqrt{m^{n-1}}}= \frac{2+16e^{3cT}m(n-1)}{\sqrt{m^n}}\leq \frac{16mne^{3cT}}{\sqrt{m^n}}$
 show for all $s\in [0,T]$, $m,n\in\N$ that
\begin{align}
&\tnorm{{U}_{n,m}^{0}-u}{1,s}
=\xeqref{a14b}
\sup_{ t\in[s,T]}\left[
(V(t,x))^{-{\expFirstNorm}}
\left\lVert {U}_{n,m}^{0}(t,x)-u(t,x)\right\rVert_{\exponentLP}\right]\nonumber 
\\
&\leq \sup_{ t\in[s,T]}\left[(V(t,x))^{-{\expFirstNorm}}\left(\left\lVert 
{U}_{n,m}^{0}(t,x)-
\E [{U}_{n,m}^{0}(t,x)]
\right\rVert_{\exponentLP}+
\bigl\lvert\E \bigl[  {U}_{n,m}^{0}(t,x)\bigr]-u(t,x)\bigr\rvert
\right)\right]\nonumber 
\\
&\leq \sup_{t\in [s,T]}\Biggl[\xeqref{a28}
\tfrac{2\sqrt{\exponentLP-1}}{\sqrt{m^n}}+
\sum_{\ell=1}^{n-1}\sum_{j=\ell-1}^{\ell}\left[
\tfrac{2\sqrt{p-1}}{\sqrt{m^{n-\ell}}}\left(
(T-t) c\left\lVert \tnorm{ {U}_{j,m}^{0}- u}{1,t+(T-t)\unif^{0}}\right\rVert_{\exponentLP}
+\tfrac{4e^{3cT}}{\sqrt{m^j}}\right)\right]\nonumber 
\\
&\qquad\qquad\qquad+\xeqref{a23}
 (T-t)c\left\lVert \tnorm{ {U}_{n-1,m}^{0}- u}{1,t+(T-t)\unif^{0}}\right\rVert_{\exponentLP}
+\tfrac{4e^{3c T}}{\sqrt{m^{n-1}}}
\Biggr]\nonumber \\
&\leq 
\sup_{t\in [s,T]}\left[
\tfrac{2\sqrt{\exponentLP-1}}{\sqrt{m^n}}+
\sum_{\ell=0}^{n-1}\left[
\tfrac{4\sqrt{p-1}}{\sqrt{m^{n-\ell-1}}}\left(
(T-t) c\left\lVert \tnorm{ {U}_{\ell,m}^{0}- u}{1,t+(T-t)\unif^{0}}\right\rVert_{\exponentLP} 
+\tfrac{4e^{3cT}}{\sqrt{m^\ell}}\right)\right]\right]\nonumber \\
&\leq 
\tfrac{2\sqrt{\exponentLP-1}}{\sqrt{m^n}}+
\sum_{\ell=0}^{n-1}\left[
\tfrac{4\sqrt{p-1}}{\sqrt{m^{n-\ell-1}}}\left(
(T-s)^{1-\tfrac{1}{\exponentLP}} c\left[\int_{s}^{T} \tnorm{ {U}_{\ell,m}^{0}- u}{1,\zeta}^{\exponentLP} d\zeta\right]^{1/\exponentLP} 
+\tfrac{4e^{3cT}}{\sqrt{m^\ell}}\right)\right]\nonumber \\
&\leq \tfrac{16mne^{3cT}\sqrt{p-1}}{\sqrt{m^n}}
+
\sum_{\ell=0}^{n-1}\left[
\tfrac{4(T-s)^{1-\tfrac{1}{\exponentLP}} c\sqrt{p-1}}{\sqrt{m^{n-\ell-1}}}
\left[\int_{s}^{T} \tnorm{ {U}_{\ell,m}^{0}- u}{1,\zeta}^{\exponentLP} d\zeta\right]^{1/\exponentLP} 
\right]
.
\end{align}
This shows \eqref{a02d}. 

Next, 
\cite[Lemma~3.11]{HJKN20} (applied
for every $s\in[0,T] $, $n,m\in\N $ with
$M\gets m$,
$N\gets n$, $\tau\gets s$, 
$a\gets 16mne^{3c(T-s)}\sqrt{p-1}$,
$b\gets 4(T-s)^{1-\frac{1}{\exponentLP}} c\sqrt{p-1}$,  
$(f_j)_{j\in\N_0}\gets 
([s,T]\ni t\mapsto \tnorm{{U}_{j,m}^{0}-u}{1,t}\in[0,\infty])_{j\in\N_0}
$ 
in the notation of \cite[Lemma~3.11]{HJKN20}), \eqref{a02d}, 
\eqref{t04}, 
the fact that $\forall\,m\in \N\colon U_{0,m}^0=0$,
and
the fact that
$\forall\,s\in[0,T]\colon 16mne^{3c(T-s)}+8c(T-s) e^{c(T-s)}\leq 16 mne^{3c(T-s)} (1+c(T-s))\leq 16 mne^{4c(T-s)} $  prove for all $m,n\in\N$, $s\in[0,T]$ that
\begin{align}
&\tnorm{{U}_{n,m}^{0}-u}{1,s} \leq  
\left(
16mne^{3c(T-s)}\sqrt{p-1}+
4(T-s)^{1-\frac{1}{\exponentLP}} c\sqrt{p-1}\cdot (T-s)^{\frac{1}{p}}\cdot \sup_{t\in [s,T]}\tnorm{u}{1,t}
\right)\nonumber \\&\qquad\qquad\qquad\qquad\cdot 
e^{m^{p/2}/p}m^{-n/2}
 \left(1+4(T-s)^{1-\frac{1}{\exponentLP}} c\sqrt{p-1}\cdot (T-s)^{\frac{1}{p}}\right)^{n-1}\nonumber \\
&\leq \sqrt{p-1}\left(16mne^{3c(T-s)}+8c(T-s)e^{c(T-s)}\right)e^{m^{p/2}/p}m^{-n/2} \left(\sqrt{p-1}(1+4c(T-s))\right)^{n-1}\nonumber \\
&\leq  16 mn(p-1)^{n/2}
e^{4c(T-s)n}
e^{m^{p/2}/p}m^{-n/2}.
\end{align}
This proves \eqref{a02e}. The proof of \cref{a02} is thus completed.
\end{proof}
The following \cref{c01} estimates the computational effort of the
MLP approximations in \cref{t27b}.
Since the exact computational effort is difficult  to define, we instead
assume that the computational effort satisfies the recursive inequality \cref{c02a},
which we now motivate.
We think of the parameter $a_1$ as an upper bound
for the effort to compute $f(w)$ or $g(x)$ for any $w\in\R$, $x\in\R^d$, 
we think of $a_2m^\ell$ as an upper bound for the  effort
to compute one realisation of $(\approximationX_{s,t}^{\ell,m,\theta,x},\approximationX_{s,t}^{\ell-1,m,\theta,x})$,  
and 
we think of
$
\FEU_{\ell,m}$ as an upper bound for the  effort
to compute one realisation $U_{\ell,m}^0(t,x)$.
We note that we approximate the foward diffusion by the Euler approximations
in \cref{c24}.

\begin{corollary}[Analysis of the computational effort]\label{c01}
Let $\lVert \cdot\rVert\colon \bigcup_{k,\ell\in\N}\R^{k\times \ell}\to[0,\infty)$ satisfy for all $k,\ell\in\N$, $s=(s_{ij})_{i\in[1,k]\cap\N,j\in [1,\ell]\cap\N}\in\R^{k\times \ell}$ that
$\lVert s\rVert^2=\sum_{i=1}^{k}\sum_{j=1}^{\ell}\lvert s_{ij}\rvert^2$,
let  $T,\delta \in (0,\infty)$, 
$c,\beta\in [1,\infty)$,
$p\in[2,\infty)$,
$d,a_1,a_2\in \N$,
  $  \Theta = \bigcup_{ n \in \N } \Z^n$,
$f \in C( \R,\R)$, 
$g\in C(\R^d,\R)$,
$u\in C^{1,2}([0,T]\times\R^d,\R)$, 
$\mu=(\mu_{i})_{i\in[1,d]\cap\Z}\in C^2(\R^d,\R^d)$,
$\sigma =(\sigma_{i,j})_{i,j\in[1,d]\cap\Z}\in C^2(\R^d,\R^{d\times d})$, 
assume for all
 $x,y\in\R^d$, 
 $w_1,w_2\in\R$, $t\in[0,T]$ that
\begin{align}
\lVert \mu(0)\rVert+\lVert\sigma(0)\rVert\leq b,\quad 
\max\{
\lVert \mu(x)-\mu(y)\rVert,
\lVert \sigma(x)-\sigma(y)\rVert
\}\leq c\lVert x-y\rVert,\label{c20}
\end{align}
\begin{align}\label{c21}
\lvert Tf(0)\rvert+
\lvert u(t,x)\rvert +
\lvert g(x)\rvert\leq \bigl[b^2+c^2\lVert x\rVert^2\bigr]^{\beta},
\end{align}
\begin{align}
\lvert
g(x)-g(y)\rvert\leq
\max\{b^2,\lVert x\rVert^{2\beta},\lVert y\rVert^{2\beta}\}\lVert x-y\rVert,\quad \lvert f(w_1)-f(w_2)\rvert\leq c\lvert w_1-w_2\rvert,\label{c23}
\end{align}
\begin{align}\begin{split}
&
(\tfrac{\partial }{\partial t}u)(t,x)+
\sum_{i=1}^{d}\left[\mu_{i}(x)(\tfrac{\partial}{\partial x_i}u)(t,x)\right] +
\tfrac{1}{2}\sum_{i,j,k=1}^{d}\left[(\tfrac{\partial^2}{\partial x_i\partial x_j}u)(t,x)
\sigma_{i,k}(t,x)
\sigma_{j,k}(t,x)\right]\\
&=-f(u(t,x)),\quad\text{and}\quad u(T,x)=g(x) ,
\end{split}\label{c25}\end{align}
let $(\Omega,\mathcal{F},\P, (\F_t)_{t\in [0,T]})$ be a filtered probability space which satisfies the usual conditions,
for every random variable 
$\mathfrak{X}\colon\Omega\to\R\cup\{-\infty,\infty\}$
 let $\lVert\mathfrak{X}\rVert_{r} \in [0,\infty]$, $r\in[1,\infty)$,
satisfy for all $r\in[1,\infty)$ that $\lVert\mathfrak{X}\rVert_{r}^r= \E[\lvert\mathfrak{X}\rvert^r]$,
let
$\unif^\theta\colon \Omega\to[0,1]$,
$\theta\in \Theta$, be i.i.d.\ random variables which satisfy for all
$t\in [0,1]$
that
$\P(\unif^0 \leq t)= t$,
let $W^{\theta} \colon [0,T]\to \R^d$,  $\theta\in\Theta$, be independent $(\F_t)_{t\in [0,T]}$-Brownian motions, assume that
$(\unif^{\theta})_{\theta\in\Theta} $ and
$(W^{\theta})_{\theta\in\Theta}$ are independent,
for every $n\in\N$,
$\theta\in\Theta$,
$s\in[0,T]$,
$x\in\R^d$
let
$(\approximationX^{n,\theta,x}_{s,t})_{t\in[s,T]}
\colon [s,T] \times\R^d\times \Omega\to\R^d$ satisfy for all 
$k\in [0,n-1]\cap\Z$,
$t\in\bigl[\max\{s,\frac{kT}{n}\},\max\{s,\frac{(k+1)T}{n}\}\bigr]$
 that
$\approximationX^{n,\theta,x}_{s,s}=x$ and
\begin{align}\label{c24}\begin{split}
\approximationX^{n,\theta,x}_{s,t}&=
\approximationX^{n,\theta,x}_{s,\max\{s,\frac{kT}{n}\}}+ \mu\bigl(
\approximationX^{n,\theta,x}_{s,\max\{s,\frac{kT}{n}\}} \bigr)\bigl(t-\max\{s,\tfrac{kT}{n}\}\bigr)
+
\sigma\bigl(
\approximationX^{n,\theta,x}_{s,\frac{kT}{n}} \bigr)\bigl(W^{\theta}_t-W^{\theta}_{\max\{s,\frac{kT}{n}\}}\bigr),\end{split}
\end{align}
let 
$ 
  {U}_{ n,m}^{\theta } \colon [0, T] \times \R^d \times \Omega \to \R
$,
$n,m\in\Z$, $\theta\in\Theta$, satisfy for all 
$n,m\in \N$, $\theta\in\Theta$, $t\in[0,T]$, $x\in\R^d$ that
$
{U}_{-1,m}^{\theta}(t,x)={U}_{0,m}^{\theta}(t,x)=0$ and
\begin{align}
\label{t27b}
&  {U}_{n,m}^{\theta}(t,x)
=  \sum_{\ell=0}^{n-1}\Biggl[\frac{1}{m^{n-\ell}}
    \sum_{i=1}^{m^{n-\ell}}
\biggl(
      g \bigl(\approximationX^{m^\ell,(\theta,\ell,i),x}_{t,T}\bigr)-\1_{\N}(\ell)
g \bigl(\approximationX^{m^{\ell-1},(\theta,\ell,i),x}_{t,T}\bigr)\nonumber \\
 &+(T-t)
     \bigl( f\circ {U}_{\ell,m}^{(\theta,\ell,i)}\bigr)
\left(t+(T-t)\unif^{(\theta,\ell,i)},\approximationX_{t,t+(T-t)\unif^{(\theta,\ell,i)}}^{m^\ell,(\theta,\ell,i),x}\right)\nonumber \\
&-\1_{\N}(\ell)(T-t)
\bigl(f\circ {U}_{\ell-1,m}^{(\theta,\ell,-i)}\bigr)
    \left(t+(T-t)\unif^{(\theta,\ell,i)},\approximationX_{t,t+(T-t)\unif^{(\theta,\ell,i)}}^{m^{\ell-1},(\theta,\ell,i),x}   \right) \biggr)\Biggr].
\end{align}
let   $
 ( {\FEU}_{ n,m})_{n,m\in\Z}\subseteq  \N_0
$
satisfy for all $ n,m\in\N$ that
\begin{align}
{\FEU}_{0,m}=0\quad\text{and}\quad {\FEU}_{n,m}\leq  \sum_{\ell=0}^{n-1}\left[
m^{n-\ell}\left[
4a_1+a_2 m^\ell+
     {\FEU}_{\ell,m}
+\1_{\N}(\ell){\FEU}_{\ell-1,m}\right]\right],\label{c02a}
\end{align}
and
let
$M\colon \N\to\N$, $C\in  [0,\infty]$  satisfy  that
\begin{equation}
\label{c02b}
\limsup_{n\to\infty}\tfrac{1}{M(n)}=0
,\quad \sup_{n\in\N}\left[\tfrac{M(n+1)}{M(n)}+\tfrac{(M(n))^{p/2}}{n}\right]<\infty,
\end{equation} and
\begin{equation}
 C=
\sup_{n\in\N}\tfrac{
(6  (M(n))^2 )^{2+\delta}
\left[7pe^{4cT} \left(\sup\limits_{k\in \N}\tfrac{M(k+1)}{M(k)}\right) 
\right]^{(n+1)(2+\delta)}
e^{(2+\delta)(M(n))^{p/2}/p}}{
(M(n))^{n\delta/2}}.\label{c05}
\end{equation}
Then
\begin{enumerate}[i)]\itemsep0pt
\item\label{c01d} it holds that $C<\infty$,
\item \label{c01e}
there exists  an up to indistinguishability unique continuous random field
$(X^{x}_{s,t})_{s\in[0,T],t\in[s,T],x\in\R^d}
\colon \{(\mathfrak{s},\mathfrak{t})\in [0,T]^2\colon \mathfrak{s}\leq \mathfrak{t}\} \times\R^d\times \Omega\to\R^d
$  
such that 
for all
$s\in[0,T]$, 
$x\in\R^d$ it holds that
$(X^{x}_{s,t})_{t\in [s,T]}$ is $ (\F_t)_{t\in [s,T]}$-adapted
and such that
for all 
$s\in[0,T]$,
$t\in[s,T]$, $x\in\R^d$ it holds a.s.\ that
\begin{align}
X^{x}_{s,t}= x+\int_{s}^{t} \mu(X^{x}_{s,r})\,dr+\int_{s}^{t} \sigma(X^{x}_{s,r})\,dW^{0}_r,
\end{align}
\item \label{c01b}
 for all $t\in[0,T]$, $x\in \R^d$ it holds that
$\E\bigl[\lvert
g(X_{t,T}^x)\rvert\bigr]+
\int_{t}^{T} \E\bigl[\lvert f( u(r,X_{t,r}^x))\rvert
\bigr]\,d r+\sup_{r\in[0,T],\xi\in\R^d}\frac{\lvert u(r,\xi)\rvert}{V(r,\xi)}<\infty$ and
$
u(t,x)=
\E\bigl[
g(X_{t,T}^x)\bigr]+
\int_{t}^{T} \E\bigl[f( u(r,X_{t,r}^x))
\bigr]\,dr,
$
\item \label{c03} it holds  for all $n,m\in\N$ that
$
{\FEU}_{n,m}\leq \max\{4 a_1,a_2n\}(5m)^n
$,
and
\item \label{c07}
there exists $\mathsf{n}\colon (0,1)\to\N$ such that for all $\epsilon,\delta\in(0,1)$ 
 it holds that
\begin{align}
\sup_{t\in [0,T],x\in[-1,1]^d}
\left\lVert U_{\mathsf{n}(\epsilon),M(\mathsf{n}(\epsilon))}^0(t,x)-u(t,x)\right\rVert_{\exponentLP}\leq \epsilon
\end{align} and
\begin{align}\small\begin{split}
\epsilon^{2+\delta}
{\FEU}_{\mathsf{n}(\epsilon),M(\mathsf{n}(\epsilon))}\leq  (a_1+a_2)C
\left( {5}c e^{c^2\left[
\sqrt{T}+16\beta p
\right]^2T}
\left[
\sqrt{T}+16\beta p
\right]^3
e^{288\beta p c^2T}4^\beta (b^2+c^2d)^\beta\right)^{3(\delta+2)}
.\end{split}
\end{align}
\end{enumerate}
\end{corollary}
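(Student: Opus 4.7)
The overall plan is to apply \Cref{a02} to the present setting by choosing the Lyapunov-type function $V(t,x):=[b^2+c^2\lVert x\rVert^2]^{\beta}$ together with exponents $\exponentLP=p$, $\expFirstNorm=3$, $\exponentV=3p$, and $\exponentX$ sufficiently large so that $\frac{6}{\exponentV}+\frac{2}{\exponentX}\le 1$ and $\frac{2}{\exponentV}+\frac{1}{\exponentX}\le\frac{1}{p}$ hold. Under this choice, the Lipschitz/growth hypotheses \cref{a14}, \cref{a14c} follow directly from \cref{c21}--\cref{c23}; the flow/independence property \cref{d01c} follows from strong uniqueness for \cref{c24} together with the Markov structure of the Euler scheme; and the SDE/Euler moment bounds \cref{k01}, \cref{k04b} are standard consequences of the global Lipschitz assumption \cref{c20} combined with Burkholder--Davis--Gundy and the Euler-in-H\"older/$L^p$ result recalled in \cite{HN21}.

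Item \eqref{c01e} is then the classical strong existence/uniqueness theorem for It\^o SDEs with globally Lipschitz coefficients. For item \eqref{c01b}, I would apply \Cref{a02}\eqref{a02a} to obtain a unique measurable function $\tilde{u}$ satisfying the stochastic fixed-point equation, and then verify by It\^o's formula applied to $(u(s,X_{t,s}^x))_{s\in[t,T]}$ (together with \cref{c21}, \cref{c25}, and the integrability furnished by the Lyapunov bound) that the $u$ of the hypothesis also solves this fixed-point equation, whence $u=\tilde u$. Item \eqref{c03} is proved by strong induction on $n$: for $n=1$ \cref{c02a} gives $\FEU_{1,m}\le m[4a_1+a_2m]$, and assuming $\FEU_{\ell,m}\le\max\{4a_1,a_2\ell\}(5m)^\ell$ for all $\ell<n$, substituting into \cref{c02a} and using $\sum_{\ell=0}^{n-1}2\cdot 5^{\ell-n}\le 1$ yields the claim.

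For \eqref{c07}, \Cref{a02}\eqref{a02e} (applied with $m=M(n)$) gives $\tnorm{U^{0}_{n,M(n)}-u}{1,0}\le 16M(n)n(p-1)^{n/2}e^{4cTn}e^{M(n)^{p/2}/p}(M(n))^{-n/2}$. Rescaling by the uniform bound $V(t,x)^{\expFirstNorm}\le 4^{\beta\expFirstNorm}(b^2+c^2d)^{\beta\expFirstNorm}$ on $[-1,1]^d$ converts this into a supremum $L^{p}$-estimate. I would then define $\mathsf{n}(\epsilon)$ as the smallest $n$ for which the resulting bound is $\le\epsilon$. Combining with \eqref{c03} gives
\begin{equation*}
\epsilon^{2+\delta}\FEU_{\mathsf{n}(\epsilon),M(\mathsf{n}(\epsilon))}
\le \max\{4a_1,a_2\mathsf{n}(\epsilon)\}(5M(\mathsf{n}(\epsilon)))^{\mathsf{n}(\epsilon)}\cdot\epsilon^{2+\delta},
\end{equation*}
and inserting the $\epsilon$-defining error bound (raised to the $2+\delta$) on the right produces the quantity in the numerator of \cref{c05} up to the explicit prefactor $(5ce^{c^2(\sqrt T+16\beta p)^2T}(\sqrt T+16\beta p)^3e^{288\beta p c^2 T}4^\beta(b^2+c^2d)^\beta)^{3(\delta+2)}$ coming from the Lyapunov bound, the constants $16$, $4cT$, and $\sqrt{p-1}$, and the factor $(a_1+a_2)$.

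For \eqref{c01d}, the boundedness assumption $\sup_n[M(n+1)/M(n)+M(n)^{p/2}/n]<\infty$ in \cref{c02b} implies that the numerator in \cref{c05} grows at most exponentially in $n$, while the denominator $M(n)^{n\delta/2}$ grows super-exponentially since $\limsup_{n\to\infty}1/M(n)=0$, so $C<\infty$. The main obstacle is Step 1: rigorously verifying \cref{k04b} (strong convergence of the Euler scheme in the rescaled $\exponentX$-norm with the Lyapunov weight $V$) under only \cref{c20}, together with the book-keeping needed to match the explicit constants in the final bound of \eqref{c07}; everything else is either induction or an application of \Cref{a02}.
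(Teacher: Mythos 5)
Your high-level strategy (apply \cref{a02} with $\expFirstNorm=3$, standard SDE theory for \eqref{c01e}, It\^o/Feynman--Kac for \eqref{c01b}, an induction for \eqref{c03}, and the bookkeeping with \eqref{c03} and \eqref{c02b} for \eqref{c01d} and \eqref{c07}) is the same as the paper's. However, the step you yourself flag as the ``main obstacle'' is precisely where your proposal breaks down, and it is not a routine verification. With your choice $V(t,x)=[b^2+c^2\lVert x\rVert^2]^\beta$ the hypotheses \cref{k01} and \cref{k04b} of \cref{a01} are false in general. First, \cref{k04b} demands $\lVert V(t,\approximationX^{n,0,x}_{s,t})\rVert_{\exponentV}\le V(s,x)$ with constant exactly $1$; since the drift may push the process outward (e.g.\ $\mu(x)=cx$ is admissible under \cref{c20}), the moments of the Euler scheme grow exponentially in $t-s$, so a \emph{time-independent} $V$ cannot dominate them. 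Second, \cref{k01} demands $\lVert\lVert X^x_{s,t}-x\rVert\rVert_{\exponentX}\le V(s,x)\lvert t-s\rvert^{\nicefrac12}$ and $\lVert\lVert X^x_{s,t}-X^y_{s,t}\rVert\rVert_{\exponentX}\le \tfrac{V(s,x)+V(s,y)}{2}\lVert x-y\rVert$, again with constant $1$; the actual flow constants from the quantitative estimates (cf.\ \cite[Theorem~3.2]{HN21}) are of size $e^{c^2[\sqrt T+2q]^2T}[\sqrt T+2q]^3$ and need not be bounded by $V(s,0)=b^{2\beta}$. The paper resolves both issues by \emph{constructing} the Lyapunov function: with $q=8\beta p$, $\bar c=16q^2c^2$, $\varphi(x)=2^{2q}(b^2+c^2\lVert x\rVert^2)^q$, it sets $V(t,x)=\bigl[5c\,e^{c^2[\sqrt T+2q]^2T}[\sqrt T+2q]^3e^{1.5\bar cT/(2q)}\bigr]e^{1.5\bar c\beta(T-t)/q}(\varphi(x))^{\beta/q}$, so that the decaying exponential in $t$ absorbs the moment growth (via \cite[Lemma~3.1, Theorem~3.2]{HN21}) and the prefactor absorbs the flow constants; it then applies \cref{a02} with $\exponentV=\exponentX=q/\beta=8p$.

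Two consequences of this gap propagate into your later steps. Your claimed uniform bound $V(t,x)^{\expFirstNorm}\le 4^{\beta\expFirstNorm}(b^2+c^2d)^{\beta\expFirstNorm}$ on $[0,T]\times[-1,1]^d$ would not produce the stated constant in \eqref{c07}: that constant, $\bigl(5c\,e^{c^2[\sqrt T+16\beta p]^2T}[\sqrt T+16\beta p]^3e^{288\beta pc^2T}4^\beta(b^2+c^2d)^\beta\bigr)^{3(\delta+2)}$, is exactly $\sup_{t\in[0,T],x\in[-1,1]^d}(V(t,x))^{3(2+\delta)}$ for the paper's $V$ (note $2q=16\beta p$ and $e^{4.5\bar cT/(2q)}=e^{288\beta pc^2T}$), so matching it requires the correct $V$, not yours. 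Also, in the complexity comparison you should bound $\epsilon^{2+\delta}\FEU_{\mathsf n(\epsilon),M(\mathsf n(\epsilon))}$ by pairing the error at level $n=\mathsf n(\epsilon)-1$ (which exceeds $\epsilon$ by the minimality of $\mathsf n(\epsilon)$) with the cost at level $n+1=\mathsf n(\epsilon)$, and treat the case $\mathsf n(\epsilon)=1$ separately; your sketch pairs the cost and the error at the same level $\mathsf n(\epsilon)$, for which the error is already $\le\epsilon$ and no useful lower bound is available. Items \eqref{c01e}, \eqref{c03} (your induction is the content of \cite[Lemma~3.6]{HJKNW2018}) and \eqref{c01d} are fine as sketched.
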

\begin{proof}[Proof of \cref{c01}]
 First, \eqref{c05} and \eqref{c02b} 
prove \eqref{c01d}. 
Next,
a standard result on stochastic differential equations with Lipschitz continuous coefficients
(see, e.g., 
\cite[Theorem~4.5.1]{Kunita1990}) 
and \eqref{c20} show \eqref{c01e}.

Throughout the rest of this proof 
let $q,\bar{c}\in \R$ satisfy that $q=8\beta p$ and $\bar{c}= 
16q^2c^2$, let $\varphi\colon \R^d\to[1,\infty) $,
$V\colon [0,T]\times\R^d\to[1,\infty)  $
 satisfy 
for all 
$t\in[0,T]$,
$x\in\R^d$ that
\begin{align}
\varphi(x)= 2^{2q}\Bigl( b^2+ c^2\lVert x \rVert^2\Bigr)^{q}\label{c22}
\end{align}
and
\begin{align}
V(t,x)=\left[{5}c e^{c^2\left[
\sqrt{T}+2q
\right]^2T}
\left[
\sqrt{T}+2q
\right]^3
e^{\frac{1.5 \bar{c} T}{2q}}\right]
e^{\frac{1.5\bar{c}\beta(T-t)}{ q}}
(\varphi(x))^{\frac{\beta}{ q}}.\label{p24}
\end{align}
Observe that \eqref{c21}, \eqref{c22}, and \eqref{p24}
show for all $t\in[0,T]$, $x\in\R^d$ that
\begin{align}
\lvert Tf(0)\rvert+\lvert u(t,x)\rvert+
\lvert g(x)\rvert\leq\xeqref{c21} \bigl[b^2+c^2\lVert x\rVert^2\bigr]^{\beta}\leq\xeqref{c22} (\varphi(x))^{\frac{\beta}{q}}\leq\xeqref{p24} V(t,x).\label{c12}
\end{align}
Next, \eqref{c23}, the fact that $\beta,c\in [1,\infty)$, \eqref{c22}, and \eqref{p24} show for all $x\in\R^d$, $y\in \R^d\setminus\{x\}$ that
\begin{align}\begin{split}
&
\tfrac{\lVert g(x)-g(y)\rVert}{\lVert x-y\rVert}\leq \xeqref{c23}
\max\{b^2,\lVert x\rVert^{2\beta},\lVert y\rVert^{2\beta}\}\\
&\leq \tfrac{2(b^2+c^2\lVert x\rVert^2)^\beta
+2(b^2+c^2\lVert y\rVert^2)^\beta}{2}\leq\xeqref{c22} \tfrac{(\varphi(x))^{\frac{\beta}{q}}+(\varphi(y))^{\frac{\beta}{q}}}{2}\leq \xeqref{p24}
\tfrac{V(T,x)+V(T,y)}{2\sqrt{T}}.
\end{split}\label{c13}
\end{align}
Next, \eqref{c20}, the fact that $\forall\,A,B\in [0,\infty)\colon A+B\leq 2\sqrt{A^2+B^2}$, and \eqref{p24} show 
 for all
$x\in\R^d$  that 
\begin{align}
\lVert\mu(0)\rVert+\lVert\sigma(0)\rVert+c\lVert x\rVert\leq  \xeqref{c20}
b+c\lVert x\rVert
\leq
2\left[b^2+ c^2\lVert x \rVert^2\right]^{\frac{1}{2}}=  (\varphi(x))^{\frac{1}{2q}}.\label{p27}
\end{align}
Moreover, 
the fact that
$\forall\, x\in\R^d\colon 
\varphi(x)= \bigl(4b^2+ 4c^2\lVert x \rVert^2\bigr)^{q}
$ (see \eqref{p24}), the fact that 
$q\geq 3$, 
  \cite[Lemma~3.1]{HN21}
 (applied with
$p\gets q$, $a\gets 4 b^2$, $c\gets 2c$, $V\gets\varphi$
in the notation of 
\cite[Lemma~3.1]{HN21}), and the fact that
$\bar{c}=16q^2c^2$
show for all 
$x,y\in\R^d$ that
$
\bigl\lvert((\totalD \varphi)(x))(y)\bigr\rvert
\leq {4q}{c} (\varphi(x))^{\frac{2q-1}{2q}}\lVert y\rVert
\leq \bar{c} (\varphi(x))^{\frac{2q-1}{2q}}\lVert y\rVert
$ and $
\bigl\lvert ((\totalD^2 \varphi)(x))(y,y)\bigr\rvert
\leq 16q^2c^2(\varphi (x))^{\frac{2q-2}{2q}}\lVert y\rVert^2
=\bar{c}(\varphi (x))^{\frac{2q-2}{2q}}\lVert y\rVert^2
.
$
This, 
\eqref{p27},  \eqref{c20}, 
\eqref{s01},
and
\cite[Theorem~3.2]{HN21}
(applied with
$m\gets d$, $b\gets \infty$,
$p\gets 2q$,  $V\gets \varphi$
 in the notation of \cite[Theorem~3.2]{HN21}), 
the fact that $q\geq 4$,
Jensen's inequality,  the fact that
$1\leq  q/\beta \leq q $, and the fact that
$\forall\,t,s\in [0,T]\colon \lvert t-s\rvert^{\nicefrac{1}{2}}\leq \sqrt{T}$
show that
\begin{enumerate}[(I)]\itemsep0pt
\item it holds 
for all $n\in\N$,
 $s\in[0,T]$, $t\in [s,T]$,
 $x\in\R^d$
  that 
\begin{align}\label{r10}
\E\bigl[ \varphi(\approximationX_{s,t}^{n,0,x})\bigr]\leq e^{1.5\bar{c}\lvert t-s\rvert}\varphi(x)
,
\end{align}
\item  it holds for all
 $n\in\N$,
$s\in[0,T]$, 
$t\in [s,T]$,
$x\in\R^d$ that
\begin{align}\label{r11}
\left\lVert \left\lVert 
\approximationX_{s,t}^{n,0,x}
-X_{s,t}^{x}\right\rVert\right\rVert_{\frac{q}{\beta}}
\leq 
 \sqrt{2}
c\left[
\sqrt{T}+2q
\right]^3
e^{ c^2\left[
\sqrt{T}+2q
\right]^2 T}
(e^{1.5\bar{c} T}\varphi(x))^{\frac{1}{2q} }
 \tfrac{\sqrt{T}}{\sqrt{n}}
,
\end{align}
and
\item 
 it holds for all $n\in\N$,
$ s,\tilde{s}\in [0,T]$,
$t\in[s,T]$,
$\tilde{t}\in[\tilde{s},T]$,
 $x,\tilde{x}\in\R^d$ that
\begin{align}\label{r12}\begin{split}
&
\left \lVert \left \lVert X^{x}_{s,t}
-
X^{\tilde{x}}_{\tilde{s},\tilde{t}}\right\rVert\right\rVert_{\frac{q}{\beta}}\leq \sqrt{2}\lVert x-\tilde{x} \rVert 
e^{c^2\left[
\sqrt{T}+2q
\right]^2 T}\\
&+
{5} e^{c^2\left[
\sqrt{T}+2q
\right]^2T}
\left[
\sqrt{T}+2q
\right]
e^{\frac{1.5 \bar{c} T}{2q}}
\frac{(\varphi(x))^{\frac{1}{2q}}
+(\varphi(\tilde{x}))^{\frac{1}{2q} }}{2}
\left[
\lvert s-\tilde{s}\rvert^{\nicefrac{1}{2}} +
\lvert t-\tilde{t}\rvert^{\nicefrac{1}{2}}\right]
.
\end{split}\end{align}
\end{enumerate}
This, the fact that $\forall\,s\in[0,T],x\in\R^d\colon\P(X_{s,s}^x=x)=1$, \eqref{c22},
and \eqref{p24} show for all $s\in[0,T]$, $t\in[s,T]$, $x,y\in \R^d$ that
\begin{align}\begin{split}
&
\left \lVert\left \lVert X^{x}_{s,t}-x\right\rVert\right\rVert_{\frac{q}{\beta}}\leq 
\left \lVert \left \lVert X^{x}_{s,t}
-
X^{x}_{{s},{s}}\right\rVert\right\rVert_{\frac{q}{\beta}}\\
&
\leq 
{5} e^{c^2\left[
\sqrt{T}+2q
\right]^2T}
\left[
\sqrt{T}+2q
\right]
e^{\frac{1.5 \bar{c} T}{2q}}
(\varphi(x))^{\frac{1}{2q}}
\lvert t-s\rvert^{\nicefrac{1}{2}}\leq\xeqref{c22}\xeqref{p24} V (s,x)\lvert t-s\rvert^{\nicefrac{1}{2}}
,
\end{split}\label{m12}\end{align}
\begin{align}\begin{split}
&
\left \lVert X^{x}_{s,t}
-
X^{y}_{{s},{t}}\right\rVert_{\frac{q}{\beta}}\leq \sqrt{2}\lVert x-y \rVert 
e^{c^2\left[
\sqrt{T}+2q
\right]^2 T}\leq\xeqref{c22}\xeqref{p24}  \tfrac{V(s,x)+V(s,y)}{2}\lVert x-y\rVert,
\end{split}\label{r12d}\end{align}
and
\begin{align}
\left\lVert\left\lVert
\approximationX_{s,t}^{n,0,x}-X_{s,t}^{x}
\right\rVert
\right\rVert_{\frac{q}{\beta}}\leq V(s,x)\tfrac{\sqrt{T}}{\sqrt{n}}.\label{r13}
\end{align}
Next, 
\eqref{r10} shows
for all $n\in\N$, $ x\in\R^d$, $s\in[0,T]$, $t\in[s,T]$ that
\begin{align}\begin{split}
&
\left
\lVert
e^{\frac{1.5\bar{c}\beta(T-t)}{ q}}
(\varphi (\approximationX_{s,t}^{ n,0,x}))^{\frac{\beta}{q}}\right\rVert_{\frac{q}{\beta}}=e^{\frac{1.5\bar{c}\beta(T-t)}{ q}} \bigl(\E\bigl[ \varphi (\approximationX_{s,t}^{ n,0,x})\bigr]\bigr)^{\frac{\beta}{q}}\\
&\leq e^{\frac{1.5\bar{c}\beta(T-t)}{ q}} e^{\frac{1.5\beta\bar{c}\lvert t-s\rvert}{q}}(\varphi (x))^{\frac{\beta}{q}}=e^{\frac{1.5\bar{c}\beta(T-s)}{ q}} (\varphi (x))^{\frac{\beta}{q}}.
\end{split}\end{align}
This,  \eqref{c22}, and \eqref{p24} show
for all $n\in\N$, $ x\in\R^d$, $s\in[0,T]$, $t\in[s,T]$ that
$
\bigl\lVert V(t,\approximationX_{s,t}^{n,0,x})\bigr\rVert_{\frac{q}{\beta}}
\leq V(s,x).
$
This, \eqref{r11},  continuity of $V$, and Fatou's lemma show for all
$n\in\N$, $ x\in\R^d$, $s\in[0,T]$, $t\in[s,T]$ that
$
X_{s,t}^{x}=
\operatornamewithlimits{\P\text{-}\lim}_{n\to\infty}
\approximationX_{s,t}^{n,0,x}
$
and
$
\bigl\lVert V\bigl(t,X_{s,t}^{x}\bigr)\bigr\rVert_{\frac{q}{\beta}}
=
\left\lVert
\operatornamewithlimits{\P\text{-}\lim}_{n\to\infty} V\bigl(t,\approximationX_{s,t}^{n,0,x}\bigr)\right\rVert_{\frac{q}{\beta}}
\leq \liminf_{n\to\infty}
\bigl\lVert V\bigl(t,\approximationX_{s,t}^{n,0,x}\bigr)\bigr\rVert_{\frac{q}{\beta}}\leq V(s,x).$ This, \eqref{c12}, and \eqref{c23} show for all $t\in[0,T] $, $x\in\R^d$ that
$\E\bigl[\lvert
g(X_{t,T}^{x})\rvert\bigr]+
\int_{t}^{T} \E\bigl[\lvert f( u(r,X_{t,r}^{x}))\rvert
\bigr]\,d r<\infty$. This, \eqref{c25}, \eqref{c01e}, and the Feynman-Kac formula imply~\eqref{c01b}.

Now,
\cref{a02} (applied with $\exponentV\gets \frac{q}{\beta}$, $\exponentX\gets \frac{q}{\beta}$, $\expFirstNorm\gets 3$), 
the fact that $\frac{6}{q/\beta}+\frac{2}{q/\beta}= \frac{8\beta}{q}= 
\frac{8\beta}{8\beta \exponentLP}
\leq \frac{1}{\exponentLP}$,
the fact that
$3\exponentLP \leq \frac{8p\beta}{\beta}= \frac{q}{\beta}$, the fact that $\frac{2}{q/\beta}+\frac{1}{q/\beta}=
\frac{3\beta}{q}= \frac{3\beta}{8\beta p}\leq 
\frac{1}{\exponentLP} $
(recall that
$q=3\beta p$), \eqref{c12}, \eqref{c13}, \eqref{c23},
 the Markov property,
\eqref{m12}, \eqref{r12d}, \eqref{r13},  the assumptions of \cref{c01},
 the fact that 
$1\leq (p-1)^{\nicefrac{1}{2}}e^{4cT}\leq pe^{4cT}$, and
 the fact that $\forall\, n\in\N\colon 4n\leq 2^{n+1}$ imply
for all $n,m\in\N$, $t\in[0,T]$, $x\in\R^d$  that
\begin{align}\label{c02}
&\tfrac{\left\lVert U_{n,m}^0(t,x)-u(t,x)\right\rVert_{\exponentLP}}{(V(t,x))^{3}}\leq 
\tfrac{ 16 mn(p-1)^{n/2}
e^{4cTn}
e^{m^{p/2}/p}}{m^{n/2}}\leq 
\tfrac{4 m \left(2 p e^{4cT}\right)^{n+1} e^{m^{p/2}/p}}{m^{n/2}}.
\end{align}
Next,
\eqref{c02a} shows for all $ n,m\in\N$ that
\begin{align}\begin{split}
{\FEU}_{n,m}&\leq 
 \sum_{\ell=0}^{n-1}\left[
m^{n-\ell}\left[
4a_1+a_2 m^\ell+
     {\FEU}_{\ell,m}
+\1_{\N}(\ell){\FEU}_{\ell-1,m}\right]\right] \\
&= 
a_2 nm^n+
 \sum_{\ell=0}^{n-1} \left[
m^{n-\ell}\left[4a_1+
     {\FEU}_{\ell,m}
+\1_{\N}(\ell){\FEU}_{\ell-1,m}\right]\right] \\
&\leq 
\max\{4 a_1,a_2n\}m^n+
 \sum_{\ell=0}^{n-1} \left[
m^{n-\ell}\left[\max\{4 a_1,a_2n\}+1+
     {\FEU}_{\ell,m}
+\1_{\N}(\ell){\FEU}_{\ell-1,m}\right]\right]
. 
\end{split}\end{align}
This and \cite[Lemma~3.6]{HJKNW2018} (applied for every $n\in \N$ with $d\gets \max\{4 a_1,a_2n\}$, $ (\mathrm{RV}_{i,M})_{i,M\in\Z}\gets 
({\FEU}_{\min\{i,n\},M})_{i,M\in\Z}
$ in the notation of \cite[Lemma~3.6]{HJKNW2018}) show for all $n,m\in\N$ that
$
{\FEU}_{n,m}\leq \max\{4 a_1,a_2n\}(5m)^n.
$ This shows \eqref{c03}.

Throughout the rest of this proof
let
$(\mathsf{n}(\epsilon))_{ \epsilon\in(0,1)}\subseteq [0,\infty]$
 satisfy for all   $\epsilon\in (0,1)$ that
\begin{align}
\mathsf{n}(\epsilon)=\inf\left(\left \{ n\in \N \colon
\sup_{k\in [n,\infty)\cap\N}
 \sup_{ t\in[0,T],x\in[-1,1]^d } \left\lVert U_{k,M(k)}^0(t,x)-u(t,x)\right\rVert_{\exponentLP}\leq \epsilon \right\}\cup\{\infty\}\right).
\label{c02c}
\end{align}
Observe that \eqref{c02},  \eqref{c02b}, and continuity of $V$ prove for all 
 $\epsilon\in(0,1)$ that
\begin{align}\begin{split}
&
\limsup_{n\to\infty}\sup_{\substack{t\in[0,T], x\in[-1,1]^d}}
\left\lVert U_{n,M(n)}^0(t,x)-u(t,x)\right\rVert_{\exponentLP}\\
&\leq \xeqref{c02}
\limsup_{n\to\infty}\sup_{\substack{t\in [0,T],  x\in[-1,1]^d}}
\tfrac{4 (M(n)) \left(2 p e^{4cT}\right)^{n+1} e^{(M(n))^{p/2}/p}(V(t,x))^{3}}{(M(n))^{n/2}}=\xeqref{c02b}0.
\end{split}\end{align} This and \eqref{c02c} show that $ \mathsf{n}(\epsilon)\in\N 
$. Next, the definitions of $q, \bar{c}$,
 \eqref{c22}, and \eqref{p24} show that
$e^{\frac{1.5 \bar{c} T}{2q}}
e^{\frac{1.5\bar{c}\beta T}{ q}}=
e^{\frac{4.5\bar{c}T}{2q}}=e^{\frac{4.5\cdot 16q^2c^2T}{2q}}=e^{36qc^2T}= 
e^{36\cdot 8\beta p c^2T}= 
e^{288\beta p c^2T}
$,
$\sup_{x\in [-1,1]^d}
\varphi(x)= 2^{2q}\Bigl( b^2+ c^2\lVert x \rVert^2\Bigr)^{q}= 4^q(b+c^2d)^q
$, 
$\sup_{x\in [-1,1]^d}(\varphi(x))^\beta= 4^\beta(b+c^2d)^\beta$,
and
\begin{align}\begin{split}
&\sup_{t\in[0,T],x\in [-1,1]^d}
V(t,x)= {5}c e^{c^2\left[
\sqrt{T}+2q
\right]^2T}
\left[
\sqrt{T}+2q
\right]^3
e^{\frac{1.5 \bar{c} T}{2q}}
e^{\frac{1.5\bar{c}\beta T}{ q}}
(\varphi(x))^{\frac{\beta}{ q}}\\
&
=  {5}c e^{c^2\left[
\sqrt{T}+16\beta p
\right]^2T}
\left[
\sqrt{T}+16\beta p
\right]^3
e^{288\beta p c^2T}4^\beta (b^2+c^2d)^\beta.\end{split}\label{c26}
\end{align}
Moreover, \eqref{c03}, \eqref{c02b}, and the fact that
$\forall\,n\in\N\colon n+1\leq 2^n$ prove for all $n\in\N$ that
\begin{align}\begin{split}
{\FEU}_{n+1,M(n+1)}&\leq \max\{4 a_1,a_2(n+1)\}(5M(n+1))^{n+1} \\
&
\leq 
4(a_1+a_2)
2^n\left[5\left(\sup\limits_{k\in \N}\tfrac{M(k+1)}{M(k)}\right)  M(n)\right]^{n+1}
.\label{c03e}\end{split}
\end{align}
This, 
\eqref{c02},
 the fact that
$4\sqrt{2}\leq 6$,
 the fact that $2\sqrt{10}\leq 7 $, and \eqref{c05}
show for all  $n\in\N$,  $t\in[0,T]$, $x\in\R^d$ that
\begin{align}
&\left[
\tfrac{\bigl\lVert 
{U}_{n,M(n)}^{0}(t,x)-u(t,x)\bigr\rVert_2}{(V(t,x))^{3}}
\right] ^{2+\delta}\max\{1,{\FEU}_{n+1,M(n+1)}\}\nonumber \\
&\leq \left[\tfrac{4m \left(2pe^{4cT}\right)^{n+1} e^{m^{\exponentLP/2}/p}}{m^{n/2}}\Bigr|_{m=M(n)}\right]^{2+\delta}
2(a_1+a_2)
2^{n+1}\left[5 \left(\sup\limits_{k\in \N}\tfrac{M(k+1)}{M(k)}\right)  M(n)\right]^{n+1}\nonumber \\
&
\leq (a_1+a_2) \!\left[
\tfrac{
(4m)^{2+\delta}
\left(2p e^{4cT}\right)^{n+1)(2+\delta)}e^{(2+\delta)m^{p/2}/p}
}{
 m^{n(1+\tfrac{\delta}{2})}}2
\left[10 \left(\sup\limits_{k\in \N}\tfrac{M(k+1)}{M(k)}\right)  m\right]^{n+1}\right]\Bigr|_{m=M(n)}
\nonumber  \\
&\leq (a_1+a_2)\tfrac{
(6  m^2 )^{2+\delta}
\left[7p
e^{4cT} \left(\sup\limits_{k\in \N}\tfrac{M(k+1)}{M(k)}\right)  \right]^{(n+1)(2+\delta)}
e^{(2+\delta)m^{p/2}/p}}{
m^{n\delta/2}}\Bigr|_{m=M(n)}\leq (a_1+a_2)C.
\label{c04}
\end{align}
This and \eqref{c03}
 show for all $\epsilon\in(0,1)$ that 
 in the case $\mathsf{n}(\epsilon)=1 $ it holds that
$\epsilon^{2+\delta}
{\FEU}_{\mathsf{n}(\epsilon),M(\mathsf{n}(\epsilon))} \leq 
{\FEU}_{1,1}\leq 20(a_1+a_2)M(1)
$ and
in the case $\mathsf{n}(\epsilon)\in \N\cap [2,\infty)$ it holds that
\begin{align}\begin{split}
&
\epsilon^{2+\delta}
{\FEU}_{\mathsf{n}(\epsilon),M(\mathsf{n}(\epsilon))}\leq 
\sup_{t\in[0,T],x\in[-1,1]^d}\left[
\left\lVert 
{U}_{n,M(n)}^{0}(t,x)-u(t,x)\right\rVert_2 ^{2+\delta}{\FEU}_{n+1,M(n+1)}\right]\bigr|_{n=\mathsf{n}(\epsilon)-1} \\
&\leq  (a_1+a_2)C\sup_{t\in[0,T], x\in[-1,1]^d}(V(t,x))^{3(\delta+2)}
.
\end{split}\end{align}
This, the fact that $ 20M(1)\leq C $, and the fact that $1\leq V$ prove 
for all $\epsilon\in(0,1)$, $x\in\R^d$ that 
$
\epsilon^{2+\delta}
{\FEU}_{\mathsf{n}(\epsilon),M(\mathsf{n}(\epsilon))}\leq  (a_1+a_2)C\sup_{t\in[0,T], x\in[-1,1]^d}(V(t,x))^{\expFirstNorm(\delta+2)}
$. This, \eqref{c02c},  the fact that $\forall\, \epsilon\in (0,1)\colon \mathsf{n}(\epsilon)\in\N$, and \eqref{c26} 
imply \eqref{c07}.
The proof of \cref{c01} is thus completed.
\end{proof}

\section{Error estimates for MLP approximations in temporal-spatial H\"older-norms}
\label{sec4}
In this section we prove strong convergence rates of MLP approximations
in temporal-spatial H\"older norms; see \Cref{p05} below.
Our H\"older-norms are defined in \cref{h19} below.
It turned out that it is advantageous to weight differences of time points
and differences of space points with different monomials of the Lyapunov-type function;
see the denominator of \cref{h19}. With this choice of H\"older-norm
we succeded to derive the closed recusion \cref{h37}.

\begin{setting}\label{b01}

Assume \cref{s12}, let $\Theta=\cup_{n\in\N}\Z^n$,
let
$\unif^\theta\colon \Omega\to[0,1]$,
$\theta\in \Theta$, be i.i.d.\ random variables which satisfy for all
 $t\in [0,1]$
that
$\P(\unif^0 \leq t)= t$,
let
$(\approximationX^{n,\theta,x}_{s,t})_{s\in[0,T],t\in[s,T],x\in\R^d}
\colon \{(\mathfrak{s},\mathfrak{t})\in [0,T]^2\colon \mathfrak{s}\leq \mathfrak{t}\} \times\R^d\times \Omega\to\R^d$, 
$\theta\in\Theta$,
$n\in\N$, be measurable,
assume  that 
$
\bigl(\approximationX^{n,\theta,x}_{s,t}, \approximationX^{\tilde{n},\theta,\tilde{x}}_{\tilde{s},\tilde{t}}\bigr)_{s,\tilde{s}\in[0,T], t\in[s,T],\tilde{t}\in[\tilde{s},T],n,\tilde{n}\in\N,x,\tilde{x}\in\R^d}$,
$\theta\in\Theta$,
are i.i.d.\ random fields,
assume  for all 
$n\in\N$,
$s\in [0,T]$, $\tilde{s},t\in[s,T]$, 
$\tilde{t}\in [\tilde{s},T]$, 
$x,\tilde{x}\in \R^d$
%
that
$((X^{x}_{s,t},\approximationX^{n,\theta,x}_{s,t}))_{\theta\in\Theta}$ and
$(\unif^\theta)_{\theta \in\Theta}$ are independent 
and that
\begin{equation}\begin{split}
&
\left\lVert 
\left\lVert 
\approximationX_{s,t}^{n,0,x}-
\approximationX_{\tilde{s},\tilde{t}}^{n,0,\tilde{x}}\right\rVert\right\rVert_{\exponentX}\leq
\tfrac{V(s,x)+V(\tilde{s},\tilde{x})}{2}
\tfrac{\lvert s-\tilde{s}\rvert^{\nicefrac{1}{2}}+\lvert t-\tilde{t}\rvert^{\nicefrac{1}{2}}}{2}
+
e^{cT}
\lVert x-\tilde{x}\rVert,\label{h21}
\end{split}\end{equation}
\begin{equation}
\begin{split}\label{h21b}
&\left\lVert \left\lVert 
 \bigl( \approximationX^{n,0,x}_{s,t} -
X^{x}_{s,t}\bigr)
-   \bigl(\approximationX^{n,0,\tilde{x}}_{\tilde{s},\tilde{t}} - X^{\tilde{x}}_{\tilde{s},\tilde{t}}\bigr)\right\rVert\right\rVert_{\exponentX}
\leq 
\tfrac{V(s,x)+V(\tilde{s},\tilde{x})}{2}\left[\tfrac{\lvert s-\tilde{s}\rvert^{\nicefrac{1}{2}} 
+\lvert t-\tilde{t}\rvert^{\nicefrac{1}{2}}}{2}+
 \lVert x-\tilde{x}\rVert\right]\tfrac{1}{\sqrt{n}},
\end{split}
\end{equation}
\begin{equation}\label{s10}
\P(X^x_{s,s}=x)=
\P\!\left(\approximationX^{n,0,x}_{s,s}=x\right)=1,
\quad\text{and}\quad
e^{cT}\leq 
\bigl\lVert V\bigl(t,\approximationX_{s,t}^{n,0,x}\bigr)\bigr\rVert_{\exponentV}
\leq V(s,x),
\end{equation}
and
let 
$ 
  {U}_{ n,m}^{\theta } \colon [0, T] \times \R^d \times \Omega \to \R
$,
$n,m\in\Z$, $\theta\in\Theta$, satisfy for all 
$n,m\in \N$, $\theta\in\Theta$, $t\in[0,T]$, $x\in\R^d$ that
$
{U}_{-1,m}^{\theta}(t,x)={U}_{0,m}^{\theta}(t,x)=0$ and
\begin{align}
\label{t27c}
  {U}_{n,m}^{\theta}(t,x)
=  \sum_{\ell=0}^{n-1}\frac{1}{m^{n-\ell}}
    &\sum_{i=1}^{m^{n-\ell}}
\Biggl[
      g \bigl(\approximationX^{m^\ell,(\theta,\ell,i),x}_{t,T}\bigr)-\1_{\N}(\ell)
g \bigl(\approximationX^{m^{\ell-1},(\theta,\ell,i),x}_{t,T}\bigr)\nonumber \\
 &+(T-t)
     \bigl( f\circ {U}_{\ell,m}^{(\theta,\ell,i)}\bigr)
\left(t+(T-t)\unif^{(\theta,\ell,i)},\approximationX_{t,t+(T-t)\unif^{(\theta,\ell,i)}}^{m^\ell,(\theta,\ell,i),x}\right)\nonumber \\
&-\1_{\N}(\ell)(T-t)
\bigl(f\circ {U}_{\ell-1,m}^{(\theta,\ell,-i)}\bigr)
    \left(t+(T-t)\unif^{(\theta,\ell,i)},\approximationX_{t,t+(T-t)\unif^{(\theta,\ell,i)}}^{m^{\ell-1},(\theta,\ell,i),x}   \right) \Biggr].
\end{align}
\end{setting}

\begin{theorem}\label{p05}Assume~\cref{b01},
let $\exponentLP\in[2,\infty)$,
$\expFirstNorm\in[3,\infty)$, $\expSecondNorm\in [9,\infty)$
satisfy that 
$\frac{\expSecondNorm+1}{\exponentV}+\frac{2}{\exponentX}\leq \frac{1}{\exponentLP} $
and
$\expFirstNorm+2\leq \expSecondNorm$,
and
for every $s\in[0,T]$  and  
every
random field $H\colon [0,T]\times\R^d \times\Omega\to \R$ 
let 
$\tnorm{H}{s},\tnorm{H}{1,s},\tnorm{H}{2,s}\in[0,\infty]$  
$ $
 satisfy that
\begin{align}\begin{split}
\tnorm{H}{s}&=\max \left\{\tnorm{H}{1,s},\tnorm{H}{2,s}\right\},\quad 
\tnorm{H}{1,s}=  \sup_{t\in[s,T]}
\sup_{x\in\R^d}\frac{\lVert H(t,x)\rVert_{\exponentLP}}{ 
(V(t,x))^{\expFirstNorm}}, \quad\text{and} 
\\
\tnorm{H}{2,s}&=\sup_{
\substack{
t_1,t_2\in[s,T],
x_1,x_2\in\R^d\colon \\(t_1,x_1)\neq (t_2,x_2)
} } \frac{\left\lVert H(t_1,x_1)-H(t_2,x_2)\right\rVert_{\exponentLP}}{
\tfrac{(V(t_1,x_1))^{\expSecondNorm}+(V(t_2,x_2))^{\expSecondNorm}}{2}
\left[
\tfrac{V(t_1,x_1)+V(t_2,x_2)}{2}
\tfrac{ \lvert t_1-t_2\rvert^{\nicefrac{1}{2}}}{\sqrt{T}}
+\tfrac{\lVert x_1-x_2\rVert}{\sqrt{T}}
\right]}.
\label{h19}\end{split}
\end{align}
Then
\begin{enumerate}[(i)]\itemsep0pt
\item \label{p01}there exists a unique measurable  $u\colon [0,T]\times\R^d\to\R$ which satisfies for all $t\in[0,T]$, $x\in \R^d$ that
$\E\bigl[|
g(X_{t,T}^{x})|\bigr]+
\int_{t}^{T} \E\bigl[|f( u(r,X_{t,r}^{x}))|
\bigr]\,d r+\sup_{r\in[0,T],\xi\in\R^d}\frac{|u(r,\xi)|}{V(r,\xi)}<\infty$ and
$
u(t,x)=
\E\bigl[
g(X_{t,T}^{x})\bigr]+
\int_{t}^{T} \E\bigl[f( u(r,X_{t,r}^{x}))
\bigr]\,dr
$, and
\item \label{p01a}
 it holds for all $s\in[0,T]$ that
$\tnorm{u}{1,s}\leq 2e^{cT}$ and $\tnorm{u}{2,s}\leq 8 e^{2cT}$,
\item\label{p04} 
it holds
for all $n,m\in\N$, 
$t\in[0,T]$ that
\begin{align}\label{h19a}
\tnorm{U^0_{n,m}-u}{t}\leq  1776 m n e^{6cT}
 e^{m^{p/2}/p}m^{-n/2}37^n e^{4 n cT}
(\exponentLP-1)^{\frac{n}{2}}.
\end{align}
\end{enumerate}
\end{theorem}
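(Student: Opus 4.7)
I would split the proof into establishing (i) and (ii) first, and then the quantitative bound (iii). For (i), the hypotheses of \cref{b01} subsume those of \cref{s12}; in particular, \cref{s10} together with \cref{h21b} and Fatou's lemma gives the Lyapunov estimate $\|V(t,X^x_{s,t})\|_{q_1}\leq V(s,x)$ needed to invoke \cref{b02}, and \cref{b02}~\eqref{k04} then yields the unique measurable $u$ satisfying the fixed-point equation. For the bound $\tnorm{u}{1,s}\leq 2e^{cT}$, \cref{b02}~\eqref{k03} supplies the pointwise estimate $|u(t,x)|\leq 2e^{c(T-t)}V(t,x)\leq 2e^{cT}(V(t,x))^{q_1}$ since $V\geq 1$ and $q_1\geq 3$. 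For $\tnorm{u}{2,s}\leq 8e^{2cT}$, \cref{b02}~\eqref{k03} provides $|u(s,x)-u(t,y)|\leq 4e^{2cT}\bigl(\frac{V(s,x)+V(t,y)}{2}\bigr)^2\frac{V(s,x)|t-s|^{1/2}+\|x-y\|}{\sqrt{T}}$; matching this against the denominator of \cref{h19} and absorbing the additional factors of $V$ into $V^{q_2}$ (valid since $q_2\geq 9$) produces the stated constant.

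For (iii), I would bound the two seminorms $\tnorm{U^0_{n,m}-u}{1,s}$ and $\tnorm{U^0_{n,m}-u}{2,s}$ separately and take the maximum. The first is delivered directly by \cref{a02}~\eqref{a02e} and is dominated by the right-hand side of \cref{h19a}, so the main task is the H\"older seminorm $\tnorm{U^0_{n,m}-u}{2,s}$.

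For this H\"older seminorm the plan is to mimic the proof of \cref{a02}~\eqref{a02e} but with the denominator of \cref{h19} in place of the supremum denominator. Fix $(t_1,x_1)\neq(t_2,x_2)$ in $[s,T]\times\R^d$. Using \cref{t27c} together with the fixed-point identity for $u$ proved in \cref{a02}~\eqref{a02f}, I would write
\[
[U^0_{n,m}(t_1,x_1)-u(t_1,x_1)]-[U^0_{n,m}(t_2,x_2)-u(t_2,x_2)]
\]
as a telescoping sum over the levels $\ell\in\{0,\ldots,n-1\}$, and for each $\ell$ split the Monte Carlo average into a centered part and a bias part. The centered part is estimated by the Marcinkiewicz--Zygmund inequality exactly as in the proof of \cref{a02}; the crucial point is that both evaluations at $(t_1,x_1)$ and $(t_2,x_2)$ use the \emph{same} realizations of $\mathcal{X}^{m^\ell,(0,\ell,i),\cdot}$ and $\unif^{(0,\ell,i)}$, so the variance of each centered increment is controlled by the coupled H\"older estimate \cref{h21}. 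The bias part is handled by H\"older's inequality, \cref{a14c}, the H\"older regularity of $u$ from (ii), and the strong Euler error \cref{h21b}. The assumption $\frac{q_2+1}{q_1}+\frac{2}{q_3}\leq\frac{1}{p}$ is precisely what is needed for these H\"older applications to pair the $L^{q_1}$-moments of $V$ along the Euler flow (\cref{s10}) with the $L^{q_3}$-differences of \cref{h21}, \cref{h21b}, producing an $L^p$-bound with exactly the $V^{q_2}$-weighted denominator structure of \cref{h19} so that the recursion closes.

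Collecting the centered and bias estimates I expect to reach a closed recursion analogous to \cref{a02}~\eqref{a02d}, of the form
\[
\tnorm{U^0_{n,m}-u}{2,s}\leq A(n,m,T)\,m^{-n/2}+B(T,c,p)\sum_{\ell=0}^{n-1}\frac{1}{\sqrt{m^{n-\ell-1}}}\left[\int_s^T\tnorm{U^0_{\ell,m}-u}{\zeta}^p\,d\zeta\right]^{1/p}.
\]
Feeding this into \cite[Lemma~3.11]{HJKN20} as in the proof of \cref{a02}~\eqref{a02e}, and then taking the maximum with the $\tnorm{\cdot}{1,s}$-bound, yields \cref{h19a} after book-keeping of the constants (the $e^{6cT}$ from the H\"older constant of $u$, the $e^{4cTn}$ from the Gronwall-type iteration, and the $37^n$ from the per-level multiplicative constant). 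The principal obstacle is the delicate book-keeping that keeps the H\"older denominator structure of \cref{h19} intact through every step, which is what forces the stringent $q_2\geq 9$ together with the exponent condition $\frac{q_2+1}{q_1}+\frac{2}{q_3}\leq\frac{1}{p}$; a secondary subtlety is the shifted time argument $t_i+(T-t_i)\unif^{(0,\ell,i)}$ inside $f\circ U^{(0,\ell,i)}_{\ell,m}$, which mixes temporal and spatial H\"older increments through the Euler flow regularity \cref{h21}.
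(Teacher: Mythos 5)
Your overall architecture for (iii) coincides with the paper's: items (i)--(ii) via \cref{b02}, then a level-wise telescoping of $U^0_{n,m}-u$ into centered and bias parts, the Marcinkiewicz--Zygmund inequality for the centered part exploiting that both evaluation points $(t_1,x_1)$, $(t_2,x_2)$ share the same randomness, a closed recursion in the weighted norms, and finally \cite[Lemma~3.11]{HJKN20}. The genuine gap is in your treatment of the bias part (and of the $g$- and $(f\circ u)$-increments inside the variance part). You propose to handle it with ``H\"older's inequality, \cref{a14c}, the H\"older regularity of $u$ from (ii), and the strong Euler error \cref{h21b}''. These are first-order ingredients: they bound single increments such as $\lvert g(\approximationX^{m^\ell,0,x_i}_{t_i,T})-g(X^{x_i}_{t_i,T})\rvert$ or $\lvert u(r_i,\approximationX^{m^\ell,0,x_i}_{t_i,r_i})-u(r_i,X^{x_i}_{t_i,r_i})\rvert$ by a quantity of order $m^{-\ell/2}$, but without the factor $\bigl[\tfrac{V(t_1,x_1)+V(t_2,x_2)}{2}\tfrac{\lvert t_1-t_2\rvert^{\nicefrac{1}{2}}}{\sqrt T}+\tfrac{\lVert x_1-x_2\rVert}{\sqrt T}\bigr]$ that sits in the denominator of \cref{h19}; consequently the quotient defining $\tnorm{\cdot}{2,s}$ blows up as $(t_1,x_1)\to(t_2,x_2)$ and the recursion for the H\"older seminorm does not close. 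What is needed are estimates on \emph{differences of differences}: for the $g$-terms this is exactly the second-order hypothesis \cref{l01} combined with \cref{h21b} and the coupled bound on $X^{x_1}_{t_1,T}-X^{x_2}_{t_2,T}$ (the paper's \cref{h32b}), and for the terms $(f\circ u)(r_i,\approximationX^{m^\ell,0,x_i}_{t_i,r_i})-(f\circ u)(r_i,X^{x_i}_{t_i,r_i})$ it is the mixed second-difference estimate \cref{hoelder.regularity} for $f\circ u$ from \cref{b02}~\eqref{k15} -- the main output of Section~2 -- again paired with \cref{h21b} (the paper's \cref{t15} and \cref{h33}). Your proposal never invokes \cref{b02}~\eqref{k15}, and it cannot be replaced by Lipschitz continuity of $f$ plus the first-order H\"older regularity of $u$ from (ii), since that combination destroys precisely the product structure (smallness $m^{-\ell/2}$ \emph{times} H\"older increment in the base point) that the $\tnorm{\cdot}{2,s}$-seminorm requires. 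A similar remark applies to the variance part: controlling $(T-t_1)\bigl((f\circ U^0_{\ell,m})-(f\circ u)\bigr)(r_1,\approximationX^{m^\ell,0,x_1}_{t_1,r_1})-(T-t_2)(\cdots)(r_2,\approximationX^{m^\ell,0,x_2}_{t_2,r_2})$ needs, besides \cref{h21}, the transfer estimate \cref{h29} and the Lipschitz property of $H\mapsto f\circ H$ in the \emph{combined} norm (\cref{t01}), whose quadratic term from \cref{l02} is absorbed using the $\tnorm{\cdot}{1,s}$-part; your sketch is silent on both.

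A secondary, fixable imprecision: because the right-hand side of your displayed recursion contains the combined norm $\tnorm{U^0_{\ell,m}-u}{\zeta}$ under the integral, you cannot feed the $\tnorm{\cdot}{2,s}$-recursion into \cite[Lemma~3.11]{HJKN20} by itself and only afterwards take the maximum with the $\tnorm{\cdot}{1,s}$-bound from \cref{a02}~\eqref{a02e}. You must first take the maximum of your recursion with \cref{a02}~\eqref{a02d} to obtain a closed recursion for $\tnorm{U^0_{n,m}-u}{s}$ (the paper's \cref{h37}) and then apply the Gronwall-type lemma once, seeded with $\sup_{t\in[0,T]}\tnorm{u}{t}\le 8e^{2cT}$ from (ii); this ordering is what produces the constants in \cref{h19a}.
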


\begin{proof}[Proof of \cref{p05}]
Throughout this proof  for every random variable $\mathfrak{X}\colon \Omega\to\R$ with $\E[|\mathfrak{X}|]<\infty$ let 
$\var_{\exponentLP}(\mathfrak{X})\in[0,\infty]$ satisfy that
$\var_{\exponentLP}(\mathfrak{X})= \E [|\mathfrak{X}-\E [\mathfrak{X}]|^{\exponentLP}]$. 
First,
\eqref{l01}, \eqref{l02},  
\eqref{h21b}, and \eqref{s10}
show for all 
$s\in [0,T]$, $t\in [s,T]$,
$x,y\in\R^d$, $v,w\in\R$, $n\in\N$ that
\begin{equation}\begin{split}
&
\lvert g(x)-g(y)\rvert=
\tfrac{\left\lvert (g(x)-g(y)) -(g(x)-g(x))
\right\rvert}{2}
+
\tfrac{\left\lvert (g(y)-g(x)) -(g(y)-g(y))
\right\rvert}{2}\\
&\leq\xeqref{l01}\tfrac{1}{2} \tfrac{3V(T,x)+V(T,y)}{4}
\tfrac{\lVert x-y\rVert}{\sqrt{T}}
+ \tfrac{1}{2}
\tfrac{3V(T,y)+V(T,x)}{4}
\tfrac{\lVert y-x\rVert}{\sqrt{T}}= \tfrac{V(T,x)+V(T,y)}{2}\tfrac{\lVert x-y\rVert}{\sqrt{T}},
\end{split}\label{s01c}\end{equation}
\begin{equation}
\lvert f(v)-f(w)\rvert =\xeqref{l02}
\lvert (f(v)-f(w))-(f(v)-f(v))\rvert\leq c \lvert v-w\rvert,\label{s02b}
\end{equation}
and
\begin{equation}
\begin{split}
&
\left\lVert \left\lVert 
 \approximationX^{n,0,x}_{s,t} -
X^{x}_{s,t}\right\rVert\right\rVert_{\exponentX}
=
\left\lVert \left\lVert 
 \bigl( \approximationX^{n,0,x}_{s,t} -
X^{x}_{s,t}\bigr)
-   \bigl(\approximationX^{n,0,x}_{s,s} - X^{x}_{s,s}\bigr)\right\rVert\right\rVert_{\exponentX}\\
&
\leq \xeqref{h21b}
V(s,x)\tfrac{\lvert t-s\rvert^{\nicefrac{1}{2}} 
}{2}
\tfrac{1}{\sqrt{n}}\leq \tfrac{\sqrt{T}V(s,x)}{2\sqrt{n}}.
\end{split}\label{s11}
\end{equation}
%
This,  
the triangle inequality,
\eqref{h21}, 
and the fact that $e^{cT}\leq V$ (see \eqref{s10})
prove for all $s_1,s_2\in [0,T]$, 
$t_1\in [s_1,T]$,
$t_2\in [s_2,T]$, $x_1,x_2\in \R^d$ that
\begin{align}\begin{split}
&\left\lVert \left\lVert 
X_{s_1,t_1}^{x_1}-
X_{s_2,t_2}^{x_2}\right\rVert\right\rVert_{\exponentX}\\
&
\leq
 \limsup_{ n\to\infty}\left[
\left\lVert \left\lVert 
X_{s_1,t_1}^{x_1}-
\approximationX_{s_1,t_1}^{n,0,x_1}
\right\rVert
\right\rVert_{\exponentX}
+\left\lVert \left\lVert 
\approximationX_{s_1,t_1}^{n,0,x_1}-
\approximationX_{s_2,t_2}^{n,0,x_2}\right\rVert\right\rVert_{\exponentX}
+\left\lVert\left\lVert \approximationX_{s_2,t_2}^{n,0,x_2}-
X_{s_2,t_2}^{x_2}\right\rVert\right\rVert_{\exponentX}\right] \\
&\leq \limsup_{n\to\infty}\left[\xeqref{s11}
\tfrac{\sqrt{T}V(s_1,x_1)}{2\sqrt{n}}+
\xeqref{h21}
\tfrac{V(s_1,x_1)+V(s_2,x_2)}{2}
\tfrac{\lvert s_1-s_2\rvert^{\nicefrac{1}{2}}+\lvert t_1-t_2\rvert^{\nicefrac{1}{2}}}{2}+ e^{cT}
\lVert x_1-x_2\rVert +\xeqref{s11}\tfrac{\sqrt{T}V(s_2,x_2)}{2\sqrt{n}}\right]\\
&\leq \xeqref{s10}
\tfrac{V(s_1,x_1)+V(s_2,x_2)}{2}
\left[
\tfrac{\lvert s_1-s_2\rvert^{\nicefrac{1}{2}}+\lvert t_1-t_2\rvert^{\nicefrac{1}{2}}}{2}
+\lVert x_1-x_2\rVert
\right]
.\label{h21c}\end{split}
\end{align}
This and \eqref{s10} show
for all 
$s\in[0,T]$,
$t\in [s,T]$, $x\in \R^d$ that 
\begin{align}
\left\lVert \left\lVert 
X_{s,t}^{x}-
x\right\rVert\right\rVert_{\exponentX}=
\left\lVert \left\lVert 
X_{s,t}^{x}-
X_{s,s}^{x}\right\rVert\right\rVert_{\exponentX}
\leq V(s,x)\lvert t-s\rvert^{\nicefrac{1}{2}}.\label{h21d}
\end{align}
Next, \eqref{s11},  continuity of $V$,
 Fatou's lemma, 
 and \eqref{s10} yield for all 
$s\in[0,T]$,
$t\in [s,T]$, $x\in \R^d$ that 
$ \operatornamewithlimits{\P\text{-}\lim_{n\to\infty}}\approximationX_{s,t}^{n,0,x}=\xeqref{s11}X_{s,t}^{x}$
 and
\begin{align}
\left \lVert V(t,X_{s,t}^{x})\right\rVert_{\exponentV}
= 
\left \lVert\operatornamewithlimits{\P\text{-}\lim}_{n\to\infty} V(t,\approximationX_{s,t}^{n,0,x})\right\rVert_{\exponentV}\leq 
 \liminf_{n\to\infty}
\left \lVert V(t,\approximationX_{s,t}^{n,0,x})\right\rVert_{\exponentV}\leq V(s,x).
\label{t21b}
\end{align}
This, \eqref{h21d}, the assumptions of \cref{p05},
and
\cref{b02}
 prove that
\begin{enumerate}[(a)]\itemsep0pt
 \item \label{p01b}there exists a unique measurable  $u\colon [0,T]\times\R^d\to\R$ which satisfies for all $t\in[0,T]$, $x\in \R^d$ that
$\E\bigl[|
g(X_{t,T}^{x})|\bigr]+
\int_{t}^{T} \E\bigl[|f( u(r,X_{t,r}^{x}))|
\bigr]\,d r+\sup_{r\in[0,T],\xi\in\R^d}\frac{|u(r,\xi)|}{V(r,\xi)}<\infty$ and
$
u(t,x)=
\E\bigl[
g(X_{t,T}^{x})\bigr]+
\int_{t}^{T} \E\bigl[f( u(r,X_{t,r}^{x}))
\bigr]\,dr
$, 
\item it holds for all $s\in[0,T]$,
$t\in[s,T]$, $x,y\in\R^d$  that
\begin{align}\label{p02b}\begin{split}
&\lvert u(s,x)-u(t,y)\rvert\leq 4 e^{2 c T}
\left(\tfrac{V(s,x)+V(t,y)}{2}\right)^2
\tfrac{V(s,x)\lvert t-s\rvert ^{\nicefrac{1}{2}}+
\lVert x-y\rVert}{\sqrt{T}}\quad\text{and}\quad\\&
\lvert u(t,x)\rvert\leq 2e^{c(T-t)}V(t,x)
,\end{split}
\end{align}
and
\item it holds for all $s\in[0,T]$,
$t\in[s,T]$, $x,y,\tilde{x},\tilde{y}\in\R^d$ that
\begin{align}\label{p03}
&\left\lvert \bigl[
f(
u(s,x))-f(u(s,y))\bigr]-\bigl[f(u(t,\tilde{x}))-f(u(t,\tilde{y}))\bigr]
\right\rvert\leq 896 ce^{6cT}
\left(\tfrac{V(s,x)+V(s,y)+V(t,\tilde{x})+V(t,\tilde{y})}{4}\right)^7 \nonumber \\
&\quad \cdot 
\left(
\tfrac{\lVert x-y-(\tilde{x}-\tilde{y})\rVert}{ \sqrt{T} }+
\tfrac{\lVert x-y\rVert+\lVert\tilde{x}-\tilde{y}\rVert}{\sqrt{T}}
\left[2
\tfrac{V(s,x)+V(s,y)}{2}\tfrac{\lvert t-s\rvert^{\nicefrac{1}{2}}}{\sqrt{T}}+\tfrac{\lVert x-\tilde{x}\rVert}{\sqrt{T}}\right]\right).
\end{align}
\end{enumerate}
This implies \eqref{p01}. 

Next, 
\eqref{p02b} show for all $t\in [0,T]$, $x\in\R^d$ that
$\lvert u(t,x)\rvert\leq 2e^{cT}V(t,x)\leq 2e^{cT}(V(t,x))^{\expFirstNorm}$. This and \eqref{h19} imply for all $t\in[0,T]$ that $\tnorm{u}{1,t}\leq 2e^{cT}$.
Next, \eqref{p02b},   Jensen's inequality,
and the fact that
$V^2\leq V^{\expSecondNorm}$
 imply for all 
$s\in[0,T]$,
$t_1\in[s,T]$, $t_2\in[t_1,T]$,
$x_1,x_2\in\R^d$ that 
\begin{align}\begin{split}
&
\lvert u(t_1,x_1)-u(t_2,x_2)\rvert\leq \xeqref{p02b}
4e^{2cT} \left(\tfrac{V(t_1,x_1)+V(t_2,x_2)}{2}\right)^2\left[
V(t_1,x_1)\tfrac{\lvert t_1-t_2\rvert^{\nicefrac{1}{2}}}{\sqrt{T}}+\tfrac{\lVert x_1-x_2\rVert}{\sqrt{T}}\right]\\
&\leq 
8e^{2cT} \left(\tfrac{V(t_1,x_1)+V(t_2,x_2)}{2}\right)^2\left[
\tfrac{V(t_1,x_1)+V(t_2,x_2)}{2}\tfrac{\lvert t_1-t_2\rvert^{\nicefrac{1}{2}}}{\sqrt{T}}+\tfrac{\lVert x_1-x_2\rVert}{\sqrt{T}}\right]\\
&\leq 8e^{2cT} \tfrac{(V(t_1,x_1))^{\expSecondNorm}+(V(t_2,x_2))^{\expSecondNorm}}{2}\left[
\tfrac{V(t_1,x_1)+V(t_2,x_2)}{2}\tfrac{\lvert t_1-t_2\rvert^{\nicefrac{1}{2}}}{\sqrt{T}}+\tfrac{\lVert x_1-x_2\rVert}{\sqrt{T}}\right].\end{split}\label{p02}
\end{align}
This 
 and \eqref{h19} show that $\sup_{t\in[0,T]}\tnorm{u}{2,t}\leq 8e^{2cT}$. This 
the fact that $\sup_{t\in[0,T]}\tnorm{u}{1,t}\leq 2e^{cT}$
imply 
that $\sup_{t\in[0,T]}\tnorm{u}{t}\leq 8e^{2cT}$. This shows
\eqref{p01a}.

Next, \eqref{s02b}, the fact that
$\forall\,s,t\in[0,T]\colon \lvert s-t\rvert\leq \lvert s-t\rvert^{\nicefrac{1}{2}}\sqrt{T}$,
\eqref{p02} and   prove 
for all $s\in[0,T]$,
$t\in[s,T]$,  $x,y\in\R^d$ that
\begin{align}
&
\left\lvert (t-s)
\left[f(
u(s,x))-f(u(s,y))\right]\right\rvert \leq
\xeqref{s02b}
 c\lvert t-s\rvert^{\nicefrac{1}{2}}\sqrt{T}\lvert u(s,x)-u(s,y)\rvert\nonumber \\
&\leq 
cT\tfrac{\lvert t-s\rvert^{\nicefrac{1}{2}}}{\sqrt{T}}\xeqref{p02}
4e^{2cT}
\left( \tfrac{V(s,x)+V(s,y)}{2}\right)^2\tfrac{\lVert x-y\rVert}{\sqrt{T}}
= 4 c T e^{2cT}
\left( \tfrac{V(s,x)+V(s,y)}{2}\right)^2\tfrac{\lVert x-y\rVert}{\sqrt{T}}
\tfrac{|t-s|^{\nicefrac{1}{2}}}{\sqrt{T}}.\label{p06}
\end{align}
This, the triangle inequality,  \eqref{p03}, and the fact that
$1\leq V$ show
for all $s\in[0,T]$,
$t\in[s,T]$,  $x,y\in\R^d$ that
\begin{align}\begin{split}
&\left\lvert (T-s)\bigl[
f(
u(s,x))-f(u(s,y))\bigr]-(T-t)\bigl[f(u(t,\tilde{x}))-f(u(t,\tilde{y}))\bigr]\right\rvert \\
&\leq 
\left\lvert (T-t)\bigl[\bigl(
f(
u(s,x))-f(u(s,y))\bigr)-\bigl(f(u(t,\tilde{x}))-f(u(t,\tilde{y}))\bigr)\bigr]\right\rvert \\&\quad +\left\lvert (t-s)
f(
u(s,x))-f(u(s,y))\right\rvert \\
&\leq T\cdot \xeqref{p03}
896 c e^{6cT}
\left(\tfrac{V(s,x)+V(s,y)+V(t,\tilde{x})+V(t,\tilde{y})}{4}\right)^7 \\
&\quad \cdot 
\left(
\tfrac{\lVert x-y-(\tilde{x}-\tilde{y})\rVert}{ \sqrt{T} }+
\tfrac{\lVert x-y\rVert+\lVert\tilde{x}-\tilde{y}\rVert}{\sqrt{T}}
\left[2
\tfrac{V(s,x)+V(s,y)}{2}\tfrac{\lvert t-s\rvert^{\nicefrac{1}{2}}}{\sqrt{T}}+\tfrac{\lVert x-\tilde{x}\rVert}{\sqrt{T}}\right]\right)\\
&\quad + \xeqref{p06}
 4 c T e^{2cT}\left( 
\tfrac{V(s,x)+V(s,y)}{2}\right)^2\tfrac{\|x-y\|}{\sqrt{T}}\tfrac{|t-s|^{\nicefrac{1}{2}}}{\sqrt{T}}\\
&\leq 900 cT e^{6cT}
\left(\tfrac{V(s,x)+V(s,y)+V(t,\tilde{x})+V(t,\tilde{y})}{4}\right)^7 \\
&\quad \cdot 
\left(
\tfrac{\lVert x-y-(\tilde{x}-\tilde{y})\rVert}{ \sqrt{T} }+
\tfrac{\lVert x-y\rVert+\lVert\tilde{x}-\tilde{y}\rVert}{\sqrt{T}}
\left[2
\tfrac{V(s,x)+V(s,y)}{2}\tfrac{\lvert t-s\rvert^{\nicefrac{1}{2}}}{\sqrt{T}}+\tfrac{\lVert x-\tilde{x}\rVert}{\sqrt{T}}\right]\right).
\end{split}\label{t15}\end{align}
This, 
the fact that
$\forall\,x\in [0,\infty)\colon x\leq e^{-1}e^{x} $,
 the triangle inequality,
H\"older's inequality, the fact that
$
\max\{
\tfrac{7}{\exponentV}+\tfrac{2}{\exponentX}, \tfrac{8}{\exponentV}+\tfrac{1}{\exponentX}\}
\leq
\tfrac{8}{\exponentV}+\tfrac{2}{\exponentX}
\leq \tfrac{1}{\exponentLP}
$, 
\eqref{s10}, 
\eqref{t21b}, 
\eqref{h21b}, 
\eqref{s11}, 
\eqref{h21c}, 
the fact that $1\leq  V$,  the fact that
$9\leq \expSecondNorm$, and Jensen's inequality
show that for all $t_1\in[0,T] $, 
$t_2,r_1\in [t_1,T]$,
$r_2\in [t_2,T]\cap[r_1,T]$,
$x_1,x_2\in\R^d$,
$n\in\N$
with $ \lvert  r_1-r_2\rvert\leq \lvert t_1-t_2\rvert$
it holds
that $
900 cT e^{6cT}\cdot 6\leq 5400 e^{-1} e^{cT }e^{6cT} \leq 1987 e^{7cT}
$ and
\begin{align}
\begin{split}
&\biggl\lVert (T-t_1)\left[
(f\circ u)\bigl(r_1,\approximationX_{t_1,r_1}^{n,0,x_1}\bigr) 
-(f\circ u)\bigl(r_1,X_{t_1,r_1}^{x_1}\bigr)\right]\nonumber \\
&\qquad\qquad\qquad\qquad 
-(T-t_2)
\left[
(f\circ u)\bigl(r_2,\approximationX_{t_2,r_2}^{n,0,x_2}\bigr) 
-(f\circ u)\bigl(r_2,X_{t_2,r_2}^{x_2}\bigr)\right]\biggr\rVert_{\exponentLP} \nonumber \\\end{split}\nonumber\\
&\leq\xeqref{t15} 900 cT e^{6cT}\Biggl\lVert 
\left(
\tfrac{\sum_{i=1}^{2}\left[V\left(r_i,\approximationX_{t_i,r_i}^{n,0,x_i}\right)+V\left(r_i,X_{t_i,r_i}^{x_i}\right) \right]}{4}\right)^7 \Biggl[
\tfrac{\left\lVert 
\left(
\approximationX_{t_1,r_1}^{n,0,x_1}-X_{t_1,r_1}^{x_1}\right)-
\left(
\approximationX_{t_2,r_2}^{n,0,x_2}-X_{t_2,r_2}^{x_2}\right)\right\rVert}{\sqrt{T}}\nonumber \\
&
 +
\tfrac{\sum_{i=1}^{2}\left\lVert 
\approximationX_{t_i,r_i}^{n,0,x_i}-X_{t_i,r_i}^{x_i}\right\rVert}{\sqrt{T}} \Biggl(2\tfrac{\left[V\left(r_1,\approximationX_{t_1,r_1}^{n,0,x_1}\right)+V\left(r_1,X_{t_1,r_1}^{x_1}\right)\right] }{2} \tfrac{\lvert r_1-r_2\rvert^{\nicefrac{1}{2}}}{\sqrt{T}} +\tfrac{\left\lVert X_{t_1,r_1}^{x_1}-X_{t_2,r_2}^{x_2}\right\rVert}{\sqrt{T}}\Biggr)\Biggr]\Biggr\rVert_{\exponentLP}\nonumber \\
&\leq 900 cT e^{6cT}\Biggl\lVert 
\left(
\tfrac{\sum_{i=1}^{2}\left[V\left(r_i,\approximationX_{t_i,r_i}^{n,0,x_i}\right)+V\left(r_i,X_{t_i,r_i}^{x_i}\right)\right] }{4}\right)^7\Biggr\rVert_{ \frac{\exponentV}{7}}\Biggl[
\tfrac{\left\lVert \left\lVert 
\left(
\approximationX_{t_1,r_1}^{n,0,x_1}-X_{t_1,r_1}^{x_1}\right)-
\left(
\approximationX_{t_2,r_2}^{n,0,x_2}-X_{t_2,r_2}^{x_2}\right)\right\rVert\right\rVert_{\exponentX}}{\sqrt{T}}\nonumber \\
& +
\tfrac{\sum_{i=1}^{2}\left\lVert \left\lVert 
\approximationX_{t_i,r_i}^{n,0,x_i}-X_{t_i,r_i}^{x_i}\right\rVert\right\rVert_{\exponentX}}{\sqrt{T}} \Biggl(
2\left\lVert \tfrac{\left[V\left(r_1,\approximationX_{t_1,r_1}^{n,0,x_1}\right)+V\left(r_1,X_{t_1,r_1}^{x_1}\right)\right] }{2}\right\rVert_{\exponentV} \tfrac{\lvert t_1-t_2\rvert^{\nicefrac{1}{2}}}{\sqrt{T}} +\tfrac{\left\lVert \left\lVert X_{t_1,r_1}^{x_1}-X_{t_2,r_2}^{x_2}\right\rVert\right\rVert_{\exponentX}}{\sqrt{T}}\Biggr)\Biggr]\nonumber \\
&\leq
900 cT e^{6cT}
\xeqref{s10}
\xeqref{t21b}
\left(\tfrac{V(t_1,x_1)+V(t_2,x_2)}{2}\right)^7
\Biggl[\xeqref{h21b}
\tfrac{V(t_1,x_1)+V(t_2,x_2)}{2}
\left[
\tfrac{\lvert t_1-t_2\rvert^{\nicefrac{1}{2}} 
+\lvert r_1-r_2\rvert^{\nicefrac{1}{2}}}{2\sqrt{T}}+
\tfrac{ \lVert x_1-x_2\rVert}{\sqrt{T}}
\right]\tfrac{1}{\sqrt{n}}
\nonumber \\
&+\xeqref{s11}\tfrac{V(t_1,x_1)+V(t_2,x_2)}{2\sqrt{n}}\biggl(
2\xeqref{s10}
\xeqref{t21b}
V(t_1,x_1)\tfrac{\lvert t_1-t_2\rvert^{\nicefrac{1}{2}}}{\sqrt{T}}\
+\xeqref{h21c}\tfrac{V(t_1,x_1)+V(t_2,x_2)}{2}\left[
\tfrac{\lvert t_1-t_2\rvert^{\nicefrac{1}{2}}+\lvert r_1-r_2\rvert^{\nicefrac{1}{2}}}{2\sqrt{T}}+\tfrac{\lVert x_1-x_2\rVert}{\sqrt{T}}\right]
\biggr)\Biggr]\nonumber \\
&\leq
900 cT e^{6cT}
\left(\tfrac{V(t_1,x_1)+V(t_2,x_2)}{2}\right)^8
\Biggl[
\left[
\tfrac{\lvert t_1-t_2\rvert^{\nicefrac{1}{2}} 
}{\sqrt{T}}+
\tfrac{ \lVert x_1-x_2\rVert}{\sqrt{T}}
\right]\tfrac{1}{\sqrt{n}}
\nonumber \\
&+\tfrac{1}{\sqrt{n}}\biggl(
4
\tfrac{V(t_1,x_1)+V(t_2,x_2)}{2}\tfrac{\lvert t_1-t_2\rvert^{\nicefrac{1}{2}}}{\sqrt{T}}\
+\tfrac{V(t_1,x_1)+V(t_2,x_2)}{2}\left[
\tfrac{\lvert t_1-t_2\rvert^{\nicefrac{1}{2}} }{\sqrt{T}}+\tfrac{\lVert x_1-x_2\rVert}{\sqrt{T}}\right]
\biggr)\Biggr]\nonumber \\
&\leq 
900 cT e^{6cT}\tfrac{1}{\sqrt{n}}\left(\tfrac{V(t_1,x_1)+V(t_2,x_2)}{2}\right)^8
\left(6
\tfrac{V(t_1,x_1)+V(t_2,x_2)}{2}
\tfrac{\lvert t_1-t_2\rvert^{\nicefrac{1}{2}}}{\sqrt{T}}
+2\tfrac{V(t_1,x_1)+V(t_2,x_2)}{2}\tfrac{\lVert x_1-x_2\rVert}{\sqrt{T}}
\right)\nonumber \\
&\leq  1987 e^{7cT}
\tfrac{1}{\sqrt{n}}\tfrac{(V(t_1,x_1))^{\expSecondNorm}+(V(t_2,x_2))^{\expSecondNorm}}{2}
\left(
\tfrac{V(t_1,x_1)+V(t_2,x_2)}{2}
\tfrac{\lvert t_1-t_2\rvert^{\nicefrac{1}{2}}}{\sqrt{T}}
+\tfrac{\lVert x_1-x_2\rVert}{\sqrt{T}}
\right)
.\label{h33}
\end{align}
Next, \eqref{l02}, the triangle inequality, \eqref{h19}, \eqref{p02},
H\"older's inequality,
the fact that $\expFirstNorm+2\leq \expSecondNorm$, and the fact that
$1\leq  V$ 
 prove for all
$s\in[0,T] $, $t_1\in[s,T]$, 
$t_2\in [t_1,T]$,
$x_1,x_2\in\R^d$,   $U\in \{U_{n,m}^\theta \colon n,m\in\Z,\theta\in\Theta\}$ 
that 
$
\tnorm{(f\circ U)-(f\circ u)}{1,s}\leq c\tnorm{U-u}{1,s}
$ and
\begin{align}
&
\left\lVert 
(f(u(t_1,x_1))-f(U(t_1,x_1)))-( f(u(t_2,x_2))-f(U(t_2,x_2)))
\right\rVert_2\nonumber \\
&\quad
- c\left\lVert (u(t_1,x_1)-U(t_1,x_1))-(u(t_2,x_2)-U(t_2,x_2))\right\rVert_2 \nonumber \\
&\leq \xeqref{l02}  c\tfrac{\left\lVert u(t_1,x_1)-U(t_1,x_1)\right\rVert_2+\left\lVert u(t_2,x_2)-U(t_2,x_2)\right\rVert_2}{2}
\left\lvert u(t_1,x_1)-u(t_2,x_2)\right\rvert \nonumber \\
&\leq \xeqref{h19} c
\tfrac{[ ( V(t_1,x_1))^{\expFirstNorm}+(V(t_2,x_2))^{\expFirstNorm}]
\tnorm{U-u}{1,s}
}{2}\xeqref{p02}
8e^{2cT} \left(\tfrac{V(t_1,x_1)+V(t_2,x_2)}{2}\right)^2\left[
\tfrac{V(t_1,x_1)+V(t_2,x_2)}{2}\tfrac{\lvert t_1-t_2\rvert^{\nicefrac{1}{2}}}{\sqrt{T}}+\tfrac{\lVert x_1-x_2\rVert}{\sqrt{T}}\right]\nonumber \\
&\leq 8 c e^{2 c T} \tfrac{ (V(t_1,x_1))^{\expSecondNorm}+(V(t_2,x_2))^{\expSecondNorm}}{2}
\left[
\tfrac{V(t_1,x_1)+V(t_2,x_2)}{2}\tfrac{\lvert t_1-t_2\rvert^{\nicefrac{1}{2}}}{\sqrt{T}}+\tfrac{\lVert x_1-x_2\rVert}{\sqrt{T}}\right]\tnorm{u-U}{1,s}.\label{h33b}
\end{align}
Moreover, the fact that
$\forall\,x\in[0,\infty)\colon x\leq e^{-1}e^x $ shows that
$
 c+8c e^{2cT}\leq 9c e^{2cT}= \tfrac{1}{T} 9cT \cdot e^{2cT}\leq 
\tfrac{1}{T} 9 e^{-1} e^{cT}\cdot e^{2cT}\leq \tfrac{3 e^{3cT}}{T}.
$
This, \eqref{h33b}, 
 \eqref{h19}, and symmetry  prove for all
$s\in[0,T] $, 
 $U\in \{U_{n,m}^\theta \colon n,m\in\Z,\theta\in\Theta\}$ 
that 
$
\tnorm{(f\circ u)-(f\circ U)}{2,s}\leq 
c\tnorm{u-U}{2,s}+
8c e^{2cT}\tnorm{u-U}{1,s}
\leq \tfrac{3 e^{3cT}}{T}\tnorm{u-U}{s}
$
and  
\begin{align}\begin{split}
\tnorm{(f\circ U)-(f\circ u)}{s}
\leq \tfrac{3 e^{3cT}}{T}\tnorm{U-u}{s}
.\label{t01}
\end{split}\end{align}
Next, the triangle inequality, \eqref{h19}, and the fact that $1\leq V^{\expSecondNorm}$ show that for all
$s\in [0,T]$,
$t_1,t_2\in[s,T]$,
$r_1\in [t_1,T]$,
$r_2\in [t_2,T]$, 
$r\in [0,\min\{r_1,r_2\}]$,
$\xi_1,\xi_2\in\R^d$, random fields $H\colon [0,T]\times\R^d\to\R$
with $\lvert r_1-r_2\rvert\leq \lvert t_1-t_2\rvert$  and $V(r_1,\xi_1)\leq V(r_2,\xi_2)$
it holds that
\begin{equation}\textstyle
\lvert t_2-t_1\rvert\leq 
T\left(\frac{T-s}{T}\right)^{\frac{1}{2}} \frac{\lvert t_2-t_1\rvert^{\nicefrac{1}{2}}}{\sqrt{T}}
\leq T\left(\frac{T-s}{T}\right)^{\frac{1}{\exponentLP}} \frac{\lvert t_2-t_1\rvert^{\nicefrac{1}{2}}}{\sqrt{T}}
= T^{\frac{\exponentLP-1}{\exponentLP}}(T-s)^{\frac{1}{\exponentLP}} \frac{\lvert t_2-t_1\rvert^{\nicefrac{1}{2}}}{\sqrt{T}}\label{t22}
\end{equation}
and
\begin{align}
&
\left\lVert (T-t_1)H(r_1,\xi_1)-
(T-t_2)H(r_2,\xi_2)\right\rVert_{\exponentLP}\nonumber 
\\
&\leq (T-t_2) \left\lVert  H(r_1,\xi_1)-H(r_2,\xi_2)\right\rVert_{\exponentLP}
+\lVert (t_2-t_1) H(r_1,\xi_1)\rVert_{\exponentLP}\nonumber \\
&\leq T^{\frac{\exponentLP-1}{\exponentLP}}(T-s)^{\frac{1}{\exponentLP}}\xeqref{h19} 
\tfrac{(V(r_1,\xi_1))^{\expSecondNorm}+(V(r_2,\xi_2))^{\expSecondNorm}}{2}
\left[
\tfrac{V(r_1,\xi_1)+V(r_2,\xi_2)}{2}
\tfrac{\ \lvert r_1-r_2\rvert^{\nicefrac{1}{2}}}{\sqrt{T}}+\tfrac{\lVert \xi_1-\xi_2\rVert}{\sqrt{T}}
\right]\tnorm{H}{2,r}\nonumber \\
&\quad +\xeqref{t22} T^{\frac{\exponentLP-1}{\exponentLP}}(T-s)^{\frac{1}{\exponentLP}} \tfrac{\lvert t_2-t_1\rvert^{\nicefrac{1}{2}}}{\sqrt{T}}\xeqref{h19} V(r_1,\xi_1)
\tnorm{H}{1,r}\nonumber \\
&\leq   T^{\frac{\exponentLP-1}{\exponentLP}}(T-s)^{\frac{1}{\exponentLP}}\tnorm{H}{r}
\tfrac{(V(r_1,\xi_1))^{\expSecondNorm}+(V(r_2,\xi_2))^{\expSecondNorm}}{2}
\left[\tfrac{3V(r_1,\xi_1)+V(r_2,\xi_2)}{2}
\tfrac{ \lvert t_1-t_2\rvert^{\nicefrac{1}{2}}}{\sqrt{T}}+\tfrac{\lVert \xi_1-\xi_2\rVert}{\sqrt{T}}
\right]\nonumber
\\
&\leq   T^{\frac{\exponentLP-1}{\exponentLP}}(T-s)^{\frac{1}{\exponentLP}}\tnorm{H}{r}
\tfrac{(V(r_1,\xi_1))^{\expSecondNorm}+(V(r_2,\xi_2))^{\expSecondNorm}}{2}
\left[2
\tfrac{V(r_1,\xi_1)+V(r_2,\xi_2)}{2}
\tfrac{ \lvert t_1-t_2\rvert^{\nicefrac{1}{2}}}{\sqrt{T}}+\tfrac{\lVert \xi_1-\xi_2\rVert}{\sqrt{T}}
\right]
.\label{m11}
\end{align} 
This, symmetry,
the disintegration theorem (see, e.g., \cite[Lemma 2.2]{HJKNW2018}), the assumption on measurability and independence,
the triangle inequality, H\"older's inequality, the fact that
$\max\{\tfrac{\expSecondNorm}{\exponentV}+ \tfrac{1}{\exponentV} ,
\tfrac{\expSecondNorm}{\exponentV}+ \tfrac{1}{\exponentX} 
\}
\leq \frac{1}{\exponentLP}$,
\eqref{s10}, 
and \eqref{h21}
 show that for all
$s\in [0,T]$,
$t_1,t_2\in[s,T]$,
$x_1,x_2\in\R^d$,
$r_1\in [t_1,T]$,
$r_2\in [t_2,T]$,
$r\in [0,\min\{r_1,r_2\}]$,
 $m,n\in \N$, $\ell,\nu\in\Z $, 
$H\in \mathrm{span}_{\R}(\{f\circ U_{\ell,m}^{\nu},f\circ u\}) $ 
with  $|r_1-r_2|\leq |t_1-t_2|$ it holds 
that
\begin{align}
&
\left\lVert (T-t_1)
H
\bigl(r_1,\approximationX_{t_1,r_1}^{n,0,x_1}\bigr)
-(T-t_2)H
\bigl(r_2,\approximationX_{t_2,r_2}^{n,0,x_2}\bigr)
\right\rVert_{\exponentLP}\nonumber \\
&= \left\lVert \left\lVert (T-t_1)
H
\bigl(r_1,\xi_1\bigr)
-(T-t_2)H
\bigl(r_2,\xi_2\bigr)
\right\rVert_{\exponentLP}\big|_{\substack{\xi_1= \approximationX_{t_1,r_1}^{n,0,x_1}
,\xi_2= \approximationX_{t_2,r_2}^{n,0,x_2}
 }}
\right\rVert_{\exponentLP}\nonumber 
\\
&\leq\xeqref{m11} T^{\frac{\exponentLP-1}{\exponentLP}}(T-s)^{\frac{1}{\exponentLP}}\left\lVert \left[
 \tnorm{H}{r}
 \tfrac{(V(r_1,\xi_1))^{\expSecondNorm}+(V(r_2,\xi_2))^{\expSecondNorm}}{2}
\left(2 \tfrac{V(r_1,\xi_1)+V(r_2,\xi_2)}{2}
\tfrac{\left\lvert t_1-t_2\right\rvert ^{\nicefrac{1}{2}}}{\sqrt{T}}+\tfrac{\|\xi_1-\xi_2\|}{\sqrt{T}}\right)\right]
\biggr|_{\substack{\xi_1= \approximationX_{t_1,r_1}^{n,0,x_1}
\\\xi_2= \approximationX_{t_2,r_2}^{n,0,x_2}
 }}
\right\rVert_{\exponentLP}  \nonumber
\\
&\leq  T^{\frac{\exponentLP-1}{\exponentLP}}(T-s)^{\frac{1}{\exponentLP}}
 \tnorm{H}{r}
\left(\tfrac{1}{2}{\textstyle\sum\limits_{i=1}^{2}}\left\lVert \bigl(V\bigl(r_i, \approximationX_{t_i,r_i}^{n,0,x_i}\bigr)\bigr)^{\expSecondNorm}\right\rVert_{\frac{\exponentV}{\expSecondNorm}}\right)\cdot\nonumber 
\\
&\qquad\qquad\qquad\qquad  \cdot
\left[2\left(\tfrac{1}{2}{\textstyle\sum\limits_{i=1}^{2}}
\left\lVert V\bigl(r_i, \approximationX_{t_i,r_i}^{n,0,x_i}\bigr)\right\rVert_{\exponentV}\right)
\tfrac{|t_1-t_2|^{\nicefrac{1}{2}}}{\sqrt{T}}+ \tfrac{
\left\lVert \left\lVert \approximationX_{t_1,r_1}^{n,0,x_1}-
 \approximationX_{t_2,r_2}^{n,0,x_2}
\right\rVert\right\rVert_{\exponentX}}{\sqrt{T}}\right]
 \nonumber 
\\
&\leq  T^{\frac{\exponentLP-1}{\exponentLP}}(T-s)^{\frac{1}{\exponentLP}}
 \tnorm{H}{r}\xeqref{s10}
\tfrac{(V(t_1, x_1))^{\expSecondNorm}+(V(t_2, x_2))^{\expSecondNorm}}{2}\nonumber\\ 
&\qquad\cdot 
\biggl[2\xeqref{s10} \tfrac{V(t_1, x_1)+V(t_2,x_2)}{2}
\tfrac{\lvert t_1-t_2\rvert^{\nicefrac{1}{2}}}{\sqrt{T}}
+ \xeqref{h21}\tfrac{V(t_1, x_1)+V(t_2,x_2)}{2}
\tfrac{\lvert t_1-t_2\rvert^{\nicefrac{1}{2}}
+\lvert r_1- r_2\rvert^{\nicefrac{1}{2}}}{2\sqrt{T}}
+ \tfrac{e^{cT} \lVert x_1-x_2\rVert}{\sqrt{T}}\biggr]
\nonumber 
\\
&\leq 3e^{cT}
 T^{\frac{\exponentLP-1}{\exponentLP}}
\tnorm{(T-s)^{\frac{1}{\exponentLP}}H}{r}\tfrac{(V(t_1,x_1))^{\expSecondNorm}+(V(t_2,x_2))^{\expSecondNorm}}{2}
\left[
\tfrac{V(t_1,x_1)+V(t_2,x_2)}{2}
\tfrac{|t_1-t_2|^{\nicefrac{1}{2}}}{\sqrt{T}}+\tfrac{\lVert x_1-x_2\rVert}{\sqrt{T}}\right].\label{h29}
\end{align}
Next,  a telescoping sum argument  shows for all 
$\ell,m\in\N$, $t_1,t_2\in[0,T] $, 
$r_1\in[t_1,T]$,
$r_2\in[t_2,T]$,
$x_1,x_2\in\R^d$ 
that
\begin{align}
&  (T-t_1)\Bigl[   \bigl( f\circ {U}_{\ell,m}^{0}\bigr)\!
\left(r_1,\approximationX_{t_1,r_1}^{m^{\ell},0,x_1}\right)
-
\bigl(f\circ {U}_{\ell-1,m}^{1}\bigr)\!
    \left(r_1,\approximationX_{t_1,r_1}^{m^{\ell-1},0,x_1}   \right)\Bigr]
\nonumber \\
&\qquad -
(T-t_2)\Bigl[   \bigl( f\circ {U}_{\ell,m}^{0}\bigr)\!
\left(r_2,\approximationX_{t_2,r_2}^{m^{\ell},0,x_2}\right)
-
\bigl(f\circ {U}_{\ell-1,m}^{1}\bigr) \!
    \left(r_2,\approximationX_{t_2,r_2}^{m^{\ell-1},0,x_2}   \right)\Bigr]
\nonumber \\
&= \biggl\{(T-t_1)\bigl(\bigl(f \circ{U}_{\ell,m}^{0}\bigr)-(f\circ u)\bigr)\!
\left(r_1,\approximationX_{t_1,r_1}^{m^{\ell},0,x_1}\right)\nonumber \\
& \qquad\qquad\qquad\qquad\qquad 
-
(T-t_2)\bigl(\bigl(f \circ{U}_{\ell,m}^{0}\bigr)-(f\circ u)\bigr)\!
\left(r_2,\approximationX_{t_2,r_2}^{m^{\ell},0,x_2}\right)\biggr\}\xeqref{h29}
\nonumber \\
&\quad
- \biggl\{(T-t_1)\bigl(\bigl(f\circ {U}_{\ell-1,m}^{1}\bigr)-(f\circ u)\bigr)\!
    \left(r_1,\approximationX_{t_1,r_1}^{m^{\ell-1},0,x_1}   \right) 
\nonumber \\
& \qquad\qquad\qquad\qquad\qquad -
(T-t_2)\bigl(\bigl(f\circ {U}_{\ell-1,m}^{1}\bigr)-(f\circ u)\bigr)\!
    \left(r_2,\approximationX_{t_2,r_2}^{m^{\ell-1},0,x_2}   \right) \biggr\}\xeqref{h29}\nonumber 
\\
&\quad
+\biggl\{
(T-t_1)
\Bigl[
(f\circ u)\!\left(r_1,\approximationX_{t_1,r_1}^{m^{\ell},0,x_1}\right) 
-(f\circ u)\!\left(r_1,X_{t_1,r_1}^{x_1}\right)\Bigr]\nonumber \\
& \qquad\qquad\qquad\qquad\qquad 
-
(T-t_2)
\Bigl[
(f\circ u)\!\left(r_2,\approximationX_{t_2,r_2}^{m^{\ell},0,x_2}\right) 
-(f\circ u)\!\left(r_2,X_{t_2,r_2}^{x_2}\right)\Bigr]\biggr\}\xeqref{h33}
 \nonumber \\
&\quad -\biggl\{(T-t_1)\Bigl[
(f\circ u)\left(r_1,\approximationX_{t_1,r_1}^{m^{\ell-1},0,x_1}   \right)
-(f\circ u)\left(r_1,X_{t_1,r_1}^{x_1}\right)
\Bigr]\nonumber \\
& \qquad\qquad\qquad\qquad\qquad 
-(T-t_2)\Bigl[
(f\circ u)\!\left(r_2,\approximationX_{t_2,r_2}^{m^{\ell-1},0,x_2}   \right)
-(f\circ u)\!\left(r_2,X_{t_2,r_2}^{x_2}\right)
\Bigr]\biggr\}\xeqref{h33}.\label{h29b}
\end{align}
This, the triangle inequality,  
\eqref{h29}, 
\eqref{h33}, 
\eqref{t01}, 
and the fact that
$3e^{cT}
 T^{\frac{\exponentLP-1}{\exponentLP}}\tfrac{3 e^{3cT}}{T}
=9 e^{4cT}T^{-\nicefrac{1}{p}}
$
prove that for all 
$\ell,m\in\N$, $t_1,t_2\in[0,T] $, 
$r_1\in[t_1,T]$,
$r_2\in[t_2,T]$, $r\in [0, \min \{r_1,r_2\}]$,
$x_1,x_2\in\R^d$ 
with  $\lvert r_1-r_2\rvert\leq \lvert t_1-t_2\rvert$  it holds 
that 
\begin{align}
&\tfrac{\footnotesize\begin{aligned}
&\bigg\lVert 
(T-t_1)\Bigl[   \bigl( f\circ {U}_{\ell,m}^{0}\bigr)\!
\left(r_1,\approximationX_{t_1,r_1}^{m^{\ell},0,x_1}\right) 
-
\bigl(f\circ {U}_{\ell-1,m}^{1}\bigr)\!
    \left(r_1,\approximationX_{t_1,r_1}^{m^{\ell-1},0,x_1}   \right)\Bigr]
\nonumber \\[-6pt]
&\quad  -
(T-t_2)\Bigl[   \bigl( f\circ {U}_{\ell,m}^{0}\bigr)\!
\left(r_2,\approximationX_{t_2,r_2}^{m^{\ell},0,x_2}\right)
-
\bigl(f\circ {U}_{\ell-1,m}^{1}\bigr)\!
    \left(r_2,\approximationX_{t_2,r_2}^{m^{\ell-1},0,x_2}   \right)\Bigr]\biggr\rVert_{\exponentLP}\nonumber \end{aligned}}{
\tfrac{(V(t_1,x_1))^{\expSecondNorm}+(V(t_2,x_2))^{\expSecondNorm}}{2}
\left[
\tfrac{V(t_1,x_1)+V(t_2,x_2)}{2}
\tfrac{\lvert t_1-t_2\rvert^{\nicefrac{1}{2}}}{\sqrt{T}}+\tfrac{\lVert x_1-x_2\rVert}{\sqrt{T}}\right]}
\nonumber \\
&\leq \sum_{\nu=0}^{1}\left[\xeqref{h29}
3e^{cT}
 T^{\frac{\exponentLP-1}{\exponentLP}}
\tnorm{(T-s)^{1/\exponentLP}\left[(f\circ U_{\ell-\nu,m}^{\nu})-(f\circ u)\right]}{r}+\xeqref{h33}
\tfrac{1987 e^{7cT}}{\sqrt{m^{\ell-\nu}}}
\right]\nonumber 
\\
&\leq 
 \sum_{\nu=0}^{1}\left[
3e^{cT}
 T^{\frac{\exponentLP-1}{\exponentLP}}\xeqref{t01}\tfrac{3 e^{3cT}}{T}
\tnorm{(T-s)^{{1}/{\exponentLP}}\left[U_{\ell-\nu,m}^{\nu}-u\right]}{r}
+ 
\tfrac{1987 e^{7cT}}{\sqrt{m^{\ell-\nu}}}\right]\nonumber \\
&= 
 \sum_{\nu=0}^{1}\left[9 e^{4cT}T^{-\nicefrac{1}{p}}
\tnorm{(T-s)^{{1}/{\exponentLP}}\left[U_{\ell-\nu,m}^{\nu}-u\right]}{r}
+ 
\tfrac{1987 e^{7cT}}{\sqrt{m^{\ell-\nu}}}\right]
.\label{h35}
\end{align}
Next, \eqref{l01}, 
the triangle inequality, H\"older's inequality, 
the fact that
$\tfrac{1}{\exponentV}+\tfrac{2}{\exponentX}\leq \tfrac{1}{\exponentLP}$,
\eqref{s10}, \eqref{t21b}, 
\eqref{h21b}, 
\eqref{s11}, 
\eqref{h21c}, 
 the fact that
$1\leq  V$, the fact that $3\leq \expSecondNorm$, and Jensen's inequality
show for all   $n\in\N$,
$t_1,t_2\in[0,T]$, 
$x_1,x_2\in\R^d$ that
\begin{align}
&  \left\lVert \left[g \bigl(\approximationX^{n,0,x_1}_{t_1,T}\bigr)-
g \bigl(X^{x_1}_{t_1,T}\bigr)\right]
-\left[g \bigl(\approximationX^{n,0,x_2}_{t_2,T}\bigr)-
g \bigl(X^{x_2}_{t_2,T}\bigr)\right]\right\rVert_{\exponentLP}\nonumber \\
&\leq  \xeqref{l01} \Biggl\lVert 
 \tfrac{
V( T,\approximationX^{n,0,x_1}_{t_1,T}) 
+V(T,X^{x_1}_{t_1,T})
+V(T, \approximationX^{n,0,x_2}_{t_2,T}) +V(T,X^{x_2}_{t_2,T})}{4}
\Biggl[ 
  \tfrac{\left\lVert 
 \bigl(\approximationX^{n,0,x_1}_{t_1,T} -
X^{x_1}_{t_1,T}\bigr)
-   \bigl(\approximationX^{n,0,x_2}_{t_2,T} - X^{x_2}_{t_2,T}\bigr)\right\rVert}{\sqrt{T}}\nonumber \\
&\qquad\qquad\qquad\qquad
+\tfrac{\left(
\left\lVert 
\approximationX^{n,0,x_1}_{t_1,T} -
X^{x_1}_{t_1,T}\right\rVert
+\left\lVert 
\approximationX^{n,0,x_2}_{t_2,T} -
X^{x_2}_{t_2,T}\right\rVert
\right)
\left\lVert X^{x_1}_{t_1,T}-X^{x_2}_{t_2,T}\right\rVert}{2T}\Biggr] \Biggr\rVert_{\exponentLP}
 \nonumber \\
&\leq   \Biggl\lVert 
 \tfrac{
V( T,\approximationX^{n,0,x_1}_{t_1,T}) 
+V(T,X^{x_1}_{t_1,T})
+V(T, \approximationX^{n,0,x_2}_{t_2,T}) +V(T,X^{x_2}_{t_2,T})}{4}\Biggr\rVert_{\exponentV}
\Biggl[ 
  \tfrac{\left\lVert \left\lVert 
 \bigl( \approximationX^{n,0,x_1}_{t_1,T} -
X^{x_1}_{t_1,T}\bigr)
-   \bigl(\approximationX^{n,0,x_2}_{t_2,T} - X^{x_2}_{t_2,T}\bigr)\right\rVert\right\rVert_{\exponentX}}{\sqrt{T}}\nonumber \\
&\qquad\qquad\qquad\qquad
+\tfrac{\left(
\left\lVert 
\left\lVert 
\approximationX^{n,0,x_1}_{t_1,T} -
X^{x_1}_{t_1,T}\right\rVert\right\rVert_{\exponentX}
+
\left\lVert \left\lVert 
\approximationX^{n,0,x_2}_{t_2,T} -
X^{x_2}_{t_2,T}\right\rVert\right\rVert_{\exponentX}
\right)
\left\lVert
\left\lVert X^{x_1}_{t_1,T}-X^{x_2}_{t_2,T}
\right\rVert
\right\rVert_{\exponentX}}{2T}\Biggr] 
 \nonumber \\
&\leq\xeqref{s10}\xeqref{t21b} \tfrac{V(t_1,x_1)+V(t_2,x_2)}{2}\Biggl[
\xeqref{h21b}\tfrac{V(t_1,x_1)+V(t_2,x_2)}{2}
\left[
\tfrac{\lvert t_1-t_2\rvert^{\nicefrac{1}{2}}}{2\sqrt{T}} +\tfrac{ \lVert x_1-x_2\rVert}{\sqrt{T}}
\right]\tfrac{1}{\sqrt{n}}\nonumber \\
&\qquad\qquad\qquad \qquad\qquad+\tfrac{1}{2T}
\xeqref{s11}
\tfrac{\sqrt{T}(V(t_1,x_1)+V(t_2,x_2))}{2\sqrt{n}}\xeqref{h21c}
\tfrac{V(t_1,x_1)+V(t_2,x_2)}{2}
\left[
\tfrac{\lvert t_1-t_2\rvert^{\nicefrac{1}{2}}}{2}
+\lVert x_1-x_2\rVert
\right]\Biggr]\nonumber \\
&\leq\left( \tfrac{V(t_1,x_1)+V(t_2,x_2)}{2}\right)^2\Biggl[
\left[
\tfrac{\lvert t_1-t_2\rvert^{\nicefrac{1}{2}}}{2\sqrt{T}} +\tfrac{ \lVert x_1-x_2\rVert}{\sqrt{T}}
\right]\tfrac{1}{\sqrt{n}}+\tfrac{1}{2\sqrt{n}}
\tfrac{(V(t_1,x_1)+V(t_2,x_2))}{2}
\left[
\tfrac{\lvert t_1-t_2\rvert^{\nicefrac{1}{2}}}{2\sqrt{T}}
+\tfrac{\lVert x_1-x_2\rVert}{\sqrt{T}}
\right]\Biggr]\nonumber \\
&\leq 
\tfrac{(V(t_1,x_1))^{\expSecondNorm}+V(t_2,x_2))^{\expSecondNorm}}{2}
\left[\tfrac{V(t_1,x_1)+V(t_2,x_2)}{2}
\tfrac{\lvert t_1-t_2\rvert^{\nicefrac{1}{2}}}{2\sqrt{T}}+
\tfrac{\lVert x_1-x_2\rVert}{\sqrt{T}}
\right]\tfrac{3}{2\sqrt{n}}
.\label{h32b}
\end{align}
This and the triangle inequality show for all   
$\ell,m\in\N$,
$t_1,t_2\in[0,T]$, 
$x_1,x_2\in\R^d$ that
\begin{align}
&\left\lVert 
\left(
      g \bigl(\approximationX^{m^\ell,(0,\ell,i),x_1}_{t_1,T}\bigr)-
g \bigl(\approximationX^{m^{\ell-1},0,x_1}_{t_1,T}\bigr)\right)\nonumber - \left(g \bigl(\approximationX^{m^\ell,0,x_2}_{t_2,T}\bigr)-
g \bigl(\approximationX^{m^{\ell-1},0,x_2}_{t_2,T}\bigr)\right)\right\rVert_{\exponentLP}\nonumber \\
&\leq {\sum\limits_{j=\ell-1}^{\ell}}\left\lVert \left[g \bigl(\approximationX^{m^j,0,x_1}_{t_1,T}\bigr)-
g \bigl(X^{x_1}_{t_1,T}\bigr)\right]
-\left[g \bigl(\approximationX^{m^j,0,x_2}_{t_2,T}\bigr)-
g \bigl(X^{x_2}_{t_2,T}\bigr)\right]\right\rVert_{\exponentLP}\nonumber \\
&\leq 
\tfrac{(V(t_1,x_1))^{\expSecondNorm}+(V(t_2,x_2))^{\expSecondNorm} }{2}
\left[
\tfrac{V(t_1,x_1)+V(t_2,x_2)}{2}
\tfrac{\lvert t_1-t_2\rvert^{\nicefrac{1}{2}}}{2\sqrt{T}} +
\tfrac{\lVert x_1-x_2\rVert}{\sqrt{T}}\right]{\sum\limits_{j=\ell-1}^{\ell}}\tfrac{2}{\sqrt{m^j}}.\label{h32}
\end{align}
Next, \eqref{s01c}, H\"older's inequality, 
the fact that $\frac{1}{\exponentV}+\frac{1}{\exponentX}\leq \frac{1}{\exponentLP}$,
\eqref{s10}, \eqref{h21},   the fact that $2\leq \expSecondNorm$, and the fact that
$e^{cT}\leq V$ show for all $t_1\in[0,T]$, 
$t_2\in [t_1,T]$,
$x_1,x_2\in\R^d$ that
\begin{align}
&
\left\lVert 
 g \bigl(\approximationX^{1,0,x_1}_{t_1,T}\bigr) 
- g \bigl(\approximationX^{1,0,x_2}_{t_2,T}\bigr)\right\rVert_{\exponentLP}
\leq \xeqref{s01c}
\left\lVert
\tfrac{V(\approximationX^{1,0,x_1}_{t_1,T})+
V( \approximationX^{1,0,x_2}_{t_2,T})}{2}
\tfrac{ \left\lVert \approximationX^{1,0,x_1}_{t_1,T}- \approximationX^{1,0,x_2}_{t_2,T}\right\rVert
}{\sqrt{T}}
\right\rVert_{\exponentLP}
\nonumber \\
&\leq 
\left\lVert
\tfrac{V(\approximationX^{1,0,x_1}_{t_1,T})+
V( \approximationX^{1,0,x_2}_{t_2,T})}{2}\right\rVert_{\exponentV}
\left\lVert 
\tfrac{ \left\lVert \approximationX^{1,0,x_1}_{t_1,T}- \approximationX^{1,0,x_2}_{t_2,T}\right\rVert
}{\sqrt{T}}
\right\rVert_{\exponentX}\nonumber \\
&\leq \xeqref{s10}
\tfrac{V(t_1,x_1)+V(t_2,x_2)}{2}
\xeqref{h21}
\left[ 
\tfrac{V(t_1,x_1)+V(t_2,x_2)}{2}
\tfrac{\lvert t_1-t_2\rvert^{\nicefrac{1}{2}}}{2}+e^{cT}\lVert x_1-x_2\rVert\right]\tfrac{1}{\sqrt{T}}
\nonumber 
\\
&\leq 
\tfrac{(V(t_1,x_1))^{\expSecondNorm}+(V(t_2,x_2))^{\expSecondNorm}}{2}
\left[
\tfrac{V(t_1,x_1)+V(t_2,x_2)}{2}
\tfrac{\lvert t_1-t_2\rvert^{\nicefrac{1}{2}}}{2\sqrt{T}}+\tfrac{\|x_1-x_2\|}{\sqrt{T}}\right]
.\label{h34}
\end{align}
Next,  \cref{a02},
\eqref{l06}\xeqref{a14},
\eqref{s02b}\xeqref{a14c}, 
\eqref{s01c}\xeqref{a14c},
\eqref{d01}\xeqref{d01c},
\eqref{h21d}\xeqref{k01},
\eqref{h21c}\xeqref{k01}, 
\eqref{s11}\xeqref{k04b}, 
\eqref{s10}\xeqref{k04b},  and 
the assumptions of \cref{p05}
prove 
for all  $n,m\in\N$, $t\in[0,T]$, $x\in\R^d$  that
$ {U}_{n,m}^{0}(t,x)
$,
$      g \bigl(\approximationX^{m^{n-1},0,x}_{t,T}\bigr)$,
and $
\bigl( f\circ {U}_{n-1,m}^{0}\bigr)
\bigl(t+(T-t)\unif^{0},\approximationX_{t,t+(T-t)\unif^{0}}^{m^{n-1},0,x}\bigr)$
are  integrable,
\begin{equation}
 \E \bigl[ {U}_{n,m}^{0}(t,x)\bigr]=
\E\bigl[
      g \bigl(\approximationX^{m^{n-1},0,x}_{t,T}\bigr)\bigr]
+(T-t)
 \E \bigl[( f\circ {U}_{n-1,m}^{0})
\bigl(t+(T-t)\unif^{0},\approximationX_{t,t+(T-t)\unif^{0}}^{m^{n-1},0,x}\bigr)\bigr],\label{n03}
\end{equation} 
\begin{equation}\label{n02}
 u(t,x)=
\E\bigl[
      g \bigl(X^{x}_{t,T}\bigr)\bigr]
+(T-t)
 \E \bigl[( f\circ u)
\bigl(t+(T-t)\unif^{0},X_{t,t+(T-t)\unif^{0}}^{x}\bigr)\bigr].
\end{equation}
and
\begin{equation}\label{n01}
\tnorm{{U}_{n,m}^{0}-u}{1,t}
\leq \tfrac{16mne^{3cT}\sqrt{p-1}}{\sqrt{m^n}}
+
\sum_{\ell=0}^{n-1}\left[
\tfrac{4(T-t)^{1-\frac{1}{\exponentLP}} c\sqrt{p-1}}{\sqrt{m^{n-\ell-1}}}
\left[\int_{t}^{T} \tnorm{ {U}_{\ell,m}^{0}- u}{1,\zeta}^{\exponentLP} d\zeta\right]^{1/\exponentLP} 
\right]
.
\end{equation}
This, the disintegration theorem (see, e.g., \cite[Lemma 2.2]{HJKNW2018}), and the independence assumptions
show for all 
$n,m\in\N$, $t_1,t_2\in[0,T]$, $x_1,x_2\in \R^d$ that 
\begin{align}
&\left\lvert \E \!\left[  \left({U}_{n,m}^{0}(t_1,x_1)-u(t_1,x_1)\right) -  
\left({U}_{n,m}^{0}(t_2,x_2)-u(t_2,x_2)\right) 
 \right]\right\rvert \nonumber  \\
& 
=\E\!\left[
     \left( g \bigl(\approximationX^{m^{n-1},0,x_1}_{t_1,T}\bigr)-g \bigl(X^{x_1}_{t_1,T}\bigr)\right)
-
  \left( g \bigl(\approximationX^{m^{n-1},0,x_2}_{t_2,T}\bigr)-g \bigl(X^{x_2}_{t_2,T}\bigr)\right)
\right]\xeqref{h32b}\nonumber \\
& 
+\E\Biggl[ \E \Biggl[\biggl(\Bigl[(T-t_1)
\bigl(f\circ {U}_{n-1,m}^{0}-f\circ u\bigr)
\left(r_1,\approximationX_{t_1,r_1}^{m^{n-1},0,x_1}\right)\nonumber \\
&\qquad-
(T-t_2)
\bigl(f\circ {U}_{n-1,m}^{0}-f\circ u\bigr)
\left(r_2,\approximationX_{t_2,r_2}^{m^{n-1},0,x_2}\right)
\Bigr]\xeqref{h29}
\nonumber \\
&\qquad +
 \Bigl[
(T-t_1)\Bigl(
(f\circ u)
\bigl(r_1,\approximationX_{t_1,r_1}^{m^{n-1},0,x_1}\bigr)
-
( f\circ u)
\bigl(r_1,X_{t_1,r_1}^{x_1}\bigr)\Bigr)\nonumber \\
&\qquad-
(T-t_2)\Bigl(
(f\circ u)
\bigl(r_2,\approximationX_{t_2,r_2}^{m^{n-1},0,x_2}\bigr)
-
( f\circ u)
\bigl(r_2,X_{t_2,r_2}^{x_2}\bigr)\Bigr)
\Bigr]\xeqref{h33}\biggr)\bigr|_{\substack{r_1=t_1+(T-t_1)\lambda\\r_2=t_2+(T-t_2)\lambda}}\Biggr]\biggr|_{\lambda=\unif^0}\Biggr].
\end{align}%
This, 
the triangle inequality,  Jensen's inequality,
\eqref{h32b}, \eqref{h29}, 
the fact that $\forall\,\lambda\in [0,1],t_1,t_2\in [0,T]\colon \lvert
(t_1+(T-t_1)\lambda)-
(t_2+(T-t_2)\lambda)
\rvert\leq \lvert t_1-t_2\rvert$,
\eqref{h33},  \eqref{t01},  the fact that
$2+1987 e^{7cT}\leq 1989 e^{7cT}$, 
and the fact that
$3e^{cT}
 T^{\frac{\exponentLP-1}{\exponentLP}}\tfrac{3 e^{3cT}}{T}
=9 e^{4cT}T^{-\nicefrac{1}{p}}
$
 show for all
 $n,m\in\N$, 
$s\in[0,T]$,
$t_1\in[s,T]$, 
$t_2\in[t_1,T]$,
$x_1,x_2\in\R^d$ that
\begin{align}
&
\frac{
\left\lvert 
\E \!\left[  \left({U}_{n,m}^{0}(t_1,x_1)-u(t_1,x_1)\right) -  
\left({U}_{n,m}^{0}(t_2,x_2)-u(t_2,x_2)\right) 
 \right]\right\rvert }{\tfrac{(V(t_1,x_1))^{\expSecondNorm}+(V(t_2,x_2))^{\expSecondNorm}}{2}
\left[
\tfrac{V(t_1,x_1)+V(t_2,x_2)}{2}
\tfrac{\lvert t_1-t_2\rvert^{\nicefrac{1}{2}}}{\sqrt{T}}+\tfrac{\lVert x_1-x_2\rVert}{\sqrt{T}}\right]}\nonumber \\
&\leq \xeqref{h32b}\tfrac{2}{\sqrt{m^{n-1}}}+
\xeqref{h29}
3e^{cT}
 T^{\frac{\exponentLP-1}{\exponentLP}}\left\lVert (T-s)^{\frac{1}{\exponentLP}} \tnorm{(f\circ U^0_{n-1,m})-(f\circ u)}{2,s+(T-s)\unif^0}\right\rVert_{\exponentLP}+\xeqref{h33}
\tfrac{1987 e^{7cT}}{\sqrt{m^{n-1}}}\nonumber 
\\
&\leq 
3e^{cT}
 T^{\frac{\exponentLP-1}{\exponentLP}}\xeqref{t01}\tfrac{3 e^{3cT}}{T}
\left\lVert 
(T-s)^{\frac{1}{\exponentLP}}
\tnorm{U_{n-1,m}^{0}-u}{s+(T-s)\unif^0}\right\rVert_{\exponentLP}
+ 
\tfrac{1989 e^{7cT}}{\sqrt{m^{n-1}}}\nonumber 
\\
&= 
9 e^{4cT}T^{-\nicefrac{1}{p}}
\left\lVert 
(T-s)^{\frac{1}{\exponentLP}}
\tnorm{U_{n-1,m}^{0}-u}{s+(T-s)\unif^0}\right\rVert_{\exponentLP}
+ 
\tfrac{1989 e^{7cT}}{\sqrt{m^{n-1}}}.\label{m02}
\end{align}
Next, the triangle inequality, and the independence and distributional properties
 show for all $n,m\in\N$, $t_1,t_2\in[0,T]$, $x_1,x_2\in\R^d$ that
\begin{align}
&  \left(\var_p \left({U}_{n,m}^{0}(t_1,x_1)
-{U}_{n,m}^{0}(t_2,x_2)\right)\right)^{1/\exponentLP}\nonumber \\
&
\leq   \sum_{\ell=0}^{n-1}\Biggl(
\var_{\exponentLP} \Biggl[
\tfrac{1}{m^{n-\ell}}
    \sum_{i=1}^{m^{n-\ell}}
\Bigl[
\left(
      g \bigl(\approximationX^{m^{\ell},(0,\ell,i),x_1}_{t_1,T}\bigr)-\1_{\N}(\ell)
g \bigl(\approximationX^{m^{\ell-1},(0,\ell,i),x_1}_{t_1,T}\bigr)\right)\nonumber \\&\qquad\qquad\qquad\qquad\qquad
- \left(g \bigl(\approximationX^{m^{\ell},(0,\ell,i),x_2}_{t_2,T}\bigr)-\1_{\N}(\ell)
g \bigl(\approximationX^{m^{\ell-1},(0,\ell,i),x_2}_{t_2,T}\bigr)\right)\Bigr]\Biggr]\Biggr)^{1/\exponentLP}
\nonumber \\
 &+ \sum_{\ell=0}^{n-1}
\Biggl(\var_{\exponentLP} \Biggl[
\tfrac{1}{m^{n-\ell}}
    \sum_{i=1}^{m^{n-\ell}}
\Bigl[
(T-t_1)
     \bigl( f\circ {U}_{\ell,m}^{(0,\ell,i)}\bigr)
\left(t_1+(T-t_1)\unif^{(0,\ell,i)},\approximationX_{t_1,t_1+(T-t_1)\unif^{(0,\ell,i)}}^{m^{\ell},(0,\ell,i),x_1}\right)\nonumber \\
&\qquad\quad 
-\1_{\N}(\ell)(T-t_1)
\bigl(f\circ {U}_{\ell-1,m}^{(0,\ell,-i)}\bigr)\!
    \left(t_1+(T-t_1)\unif^{(0,\ell,i)},\approximationX_{t_1,t_1+(T-t_1)\unif^{(0,\ell,i)}}^{m^{\ell-1},(0,\ell,i),x_1}   \right) \Bigr]\nonumber \\
&\qquad\quad -\Bigl[
(T-t_2)
     \bigl( f\circ {U}_{\ell,m}^{(0,\ell,i)}\bigr)\!
\left(t_2+(T-t_2)\unif^{(0,\ell,i)},\approximationX_{t_2,t_2+(T-t_2)\unif^{(0,\ell,i)}}^{m^{\ell},(0,\ell,i),x_2}\right)\nonumber \\
&\qquad\quad
-\1_{\N}(\ell)(T-t_2)
\bigl(f\circ {U}_{\ell-1,m}^{(0,\ell,-i)}\bigr)\!
    \left(t_2+(T-t_2)\unif^{(0,\ell,i)},\approximationX_{t_2,t_2+(T-t_2)\unif^{(0,\ell,i)}}^{m^{\ell-1},(0,\ell,i),x_2}   \right) \Bigr]\Biggr]\Biggr)^{1/\exponentLP}.\label{a03}
\end{align}
In addition, the triangle inequality,
the Marcinkiewicz-Zygmund inequality (see \cite[Theorem~2.1]{Rio09}), the fact that $\exponentLP\in [2,\infty)$,  and  Jensen's inequality show that for all $n\in\N$ and all i.i.d.\ 
real-valued, integrable random variables $\mathfrak{X}_1,\mathfrak{X}_2,\ldots,\mathfrak{X}_n$ it holds that
$
(\var_{\exponentLP}(\tfrac{1}{n}\sum_{k=1}^{n}\mathfrak{X}_k))^{1/\exponentLP}=
\left\lVert \tfrac{1}{n}\sum_{k=1}^{n}(\mathfrak{X}_k-\E[\mathfrak{X}_k])\right\rVert_{\exponentLP}\leq \sqrt{\exponentLP-1}\|\mathfrak{X}_1-\E[\mathfrak{X}_1]\|_p
\leq 2\sqrt{\exponentLP-1}\|\mathfrak{X}_1\|_p.
$
This, \eqref{a03},
the fact that $\forall\,m\in\N\colon U_{0,m}^0=0 $, \eqref{h34},
\eqref{h32}, 
  the independence and distributional properties,
the disintegration theorem (see, e.g., \cite[Lemma 2.2]{HJKNW2018}),
\eqref{h35}, 
and the fact that
$2+1987 e^{7cT}\leq 1989 e^{7cT}$
 show for all $n,m\in\N$, 
$s\in[0,T]$, 
$t_1,t_2\in [s,T]$,
$x_1,x_2\in\R^d$ that
\begin{align}
&\left(\var_p \left({U}_{n,m}^{0}(t_1,x_1)
-{U}_{n,m}^{0}(t_2,x_2)\right)\right)^{1/\exponentLP}\leq\xeqref{a03}
\tfrac{2\sqrt{\exponentLP-1}}{ \sqrt{m^n}}
\left\lVert 
 g \bigl(\approximationX^{1,0,x_1}_{t_1,T}\bigr) 
- g \bigl(\approximationX^{1,0,x_2}_{t_2,T}\bigr)\right\rVert_{\exponentLP} \xeqref{h34}  \nonumber\\
& +
\sum_{\ell=1}^{n-1}\Biggl[\tfrac{2\sqrt{\exponentLP-1}}{\sqrt{m^{n-\ell}}}
\left\lVert 
\left(
      g \bigl(\approximationX^{m^{\ell},0,x_1}_{t_1,T}\bigr)-
g \bigl(\approximationX^{m^{\ell-1},0,x_1}_{t_1,T}\bigr)\right)  - \left(g \bigl(\approximationX^{m^{\ell},0,x_2}_{t_2,T}\bigr)-
g \bigl(\approximationX^{m^{\ell-1},0,x_2}_{t_2,T}\bigr)\right)\right\rVert_{\exponentLP}\xeqref{h32} \nonumber \\
&\qquad\quad +\tfrac{2\sqrt{\exponentLP-1}}{\sqrt{m^{n-\ell}}}
\bigg\|\biggl\|\biggl[
(T-t_1)\Bigl[   \bigl( f\circ {U}_{\ell,m}^{0}\bigr)\!
\left(r_1,\approximationX_{t_1,r_1}^{m^{\ell},0,x_1}\right)
-
\bigl(f\circ {U}_{\ell-1,m}^{1}\bigr)\!
    \left(r_1,\approximationX_{t_1,r_1}^{m^{\ell-1},0,x_1}   \right)\Bigr]
\nonumber  \\
& \qquad\qquad\qquad\qquad\qquad-
(T-t_2)\Bigl[   \bigl( f\circ {U}_{\ell,m}^{0}\bigr)\!
\left(r_2,\approximationX_{t_2,r_2}^{m^{\ell},0,x_2}\right) \nonumber \\
&\qquad\qquad\qquad\qquad\qquad\qquad
-
\bigl(f\circ {U}_{\ell-1,m}^{1}\bigr)\!
    \left(r_2,\approximationX_{t_2,r_2}^{m^{\ell-1},0,x_2}   \right)\Bigr]\biggr]\bigr|_{\substack{r_1=t_1+(T-t_1)\lambda\\r_2=t_2+(T-t_2)\lambda}}\biggr\|_{\exponentLP}\biggr|_{\lambda=\unif^0}
\biggr\|_{\exponentLP}\xeqref{h35}\Biggr]  \nonumber
\\
&\leq 
\tfrac{(V(t_1,x_1))^{\expSecondNorm}+(V(t_2,x_2))^{\expSecondNorm}}{2}
\left[
\tfrac{V(t_1,x_1)+V(t_2,x_2)}{2}
\tfrac{\lvert t_1-t_2\rvert^{\nicefrac{1}{2}}}{\sqrt{T}}+\tfrac{\|x_1-x_2\|}{\sqrt{T}}\right]\Biggl(
\tfrac{2\sqrt{\exponentLP-1}}{\sqrt{m^n}}  \nonumber\\
&\quad 
+\sum_{\ell=1}^{n-1}\tfrac{2\sqrt{\exponentLP-1}}{\sqrt{m^{n-\ell}}}\sum_{j=\ell-1}^{\ell}\left[\tfrac{2}{\sqrt{m^j}}+
9 e^{4cT}T^{-\nicefrac{1}{p}}
\left\lVert 
(T-s)^{\frac{1}{\exponentLP}}
\tnorm{U_{j,m}^{0}-u}{s+(T-s)\unif^0}\right\rVert_{\exponentLP}
+ 
\tfrac{1987 e^{7cT}}{\sqrt{m^{j}}}\right]\Biggr)\nonumber
\\
\begin{split}
&\leq
\tfrac{(V(t_1,x_1))^{\expSecondNorm}+(V(t_2,x_2))^{\expSecondNorm}}{2}
\left[
\tfrac{V(t_1,x_1)+V(t_2,x_2)}{2}
\tfrac{|t_1-t_2|^{\nicefrac{1}{2}}}{\sqrt{T}}+\tfrac{\|x_1-x_2\|}{\sqrt{T}}\right]\Biggl(
\tfrac{2\sqrt{\exponentLP-1}}{\sqrt{m^n}}  \\
&\quad  
+\sum_{\ell=1}^{n-1}\tfrac{2\sqrt{\exponentLP-1}}{\sqrt{m^{n-\ell}}}\sum_{j=\ell-1}^{\ell}\left[
9 e^{4cT}T^{-\nicefrac{1}{p}}
\left\lVert 
(T-s)^{\frac{1}{\exponentLP}}
\tnorm{U_{j,m}^{0}-u}{s+(T-s)\unif^0}\right\rVert_{\exponentLP}
+ 
\tfrac{1989 e^{7cT}}{\sqrt{m^{j}}}\right]\Biggr).
\label{h36}\end{split}
\end{align}
This, the triangle inequality, 
\eqref{m02},  and
 the fact that $\forall\,n,m\in \N$, $a_0,a_1,\ldots,a_{n-1}\in[0,\infty)\colon 
a_{n-1}+\sum_{\ell=1}^{n-1}(\frac{1}{\sqrt{ m^{n-\ell}}}
\sum_{j=\ell-1}^\ell a_j)
\leq \sum_{\ell=0}^{n-1}\frac{2a_\ell}{\sqrt{ m^{n-\ell-1}}}
$
prove for all $n,m\in\N$, 
$s\in[0,T]$,
$t_1,t_2\in[s,T]$, 
$x_1,x_2\in\R^d$ that
\begin{align}
&\left\lVert  \left({U}_{n,m}^{0}(t_1,x_1)-u(t_1,x_1)\right) -  
\left({U}_{n,m}^{0}(t_2,x_2)-u(t_2,x_2)\right)\right\rVert_{\exponentLP}\nonumber \\
&\leq 
 \left(\var_p \left({U}_{n,m}^{0}(t_1,x_1)
-{U}_{n,m}^{0}(t_2,x_2)\right)\right)^{1/\exponentLP}\nonumber \\
&\qquad\qquad+
\left\lvert \E \!\left[  \left({U}_{n,m}^{0}(t_1,x_1)-u(t_1,x_1)\right) -  
\left({U}_{n,m}^{0}(t_2,x_2)-u(t_2,x_2)\right) 
 \right]\right\rvert \nonumber \\
&\leq \left(
 \tfrac{2\sqrt{\exponentLP-1}}{\sqrt{m^n}}
+\sum_{\ell=0}^{n-1}\left[\tfrac{4\sqrt{\exponentLP-1}}{\sqrt{m^{n-\ell-1}}}\left[
9 e^{4cT}T^{-\nicefrac{1}{p}}
\left\lVert 
(T-s)^{\frac{1}{\exponentLP}}
\tnorm{U_{\ell,m}^{0}-u}{s+(T-s)\unif^0}\right\rVert_{\exponentLP}
+ 
\tfrac{1989 e^{7cT}}{\sqrt{m^{\ell}}}\right]\right]\right)\nonumber  \\
& \qquad\qquad\cdot
\tfrac{(V(t_1,x_1))^{\expSecondNorm}+(V(t_2,x_2))^{\expSecondNorm}}{2}
\left[\tfrac{V(t_1,x_1)+V(t_2,x_2)}{2}
\tfrac{\lvert t_1-t_2\rvert^{\nicefrac{1}{2}}}{\sqrt{T}}+\tfrac{\lVert x_1-x_2\rVert}{\sqrt{T}}\right].
\end{align}%
This,
\eqref{h19}, 
the fact that
for all $s\in[0,T]$, measurable $h\colon [s,T]\to \R$ it holds that
$
\lVert(T-s)^{\frac{1}{\exponentLP}}h(s+(T-s)\unif^0)\rVert_{\exponentLP}= [\int_{0}^{1}(T-s)|h(s+(T-s)\lambda)|^\exponentLP\,d\lambda]^{\frac{1}{\exponentLP}}= 
[\int_{s}^{T}|h(\zeta)|^\exponentLP\,d\zeta]^{\frac{1}{\exponentLP}}
$, and the fact that $\forall\,m,n\in \N\colon 2\sqrt{\exponentLP-1}+4n\sqrt{\exponentLP-1}\sqrt{m} 1989 e^{7cT}\leq 7958\sqrt{\exponentLP-1}mn e^{7cT}$
show for all  $n,m\in\N$, 
$s\in[0,T]$ that
\begin{align}
&
\tnorm{U_{n,m}^0-u}{2,s}
\leq  \tfrac{7958\sqrt{\exponentLP-1}mn e^{7cT}}{\sqrt{m^n}}
+\sum_{\ell=0}^{n-1}\tfrac{4\sqrt{\exponentLP-1}}{\sqrt{m^{n-\ell-1}}}
9 e^{4cT}T^{-\nicefrac{1}{p}}
\left[{\textstyle\int_{s}^{T}}
\tnorm{U_{\ell,m}^{0}-u}{\zeta}^{\exponentLP}d\zeta\right]^{\frac{1}{\exponentLP}}
.
 \end{align}
This, \eqref{n01}, and \eqref{h19} prove for all $n,m\in\N$, 
$s\in[0,T]$
 that
\begin{align}\label{h37}
&
\tnorm{U_{n,m}^0-u}{s}
\leq  \tfrac{7958\sqrt{\exponentLP-1}mn e^{7cT}}{\sqrt{m^n}}
+\sum_{\ell=0}^{n-1}\tfrac{4\sqrt{\exponentLP-1}}{\sqrt{m^{n-\ell-1}}}
9 e^{4cT}T^{-\nicefrac{1}{p}}
\left[{\int_{s}^{T}}
\tnorm{U_{\ell,m}^{0}-u}{\zeta}^{\exponentLP}d\zeta\right]^{\frac{1}{\exponentLP}}
.
 \end{align}
This, \eqref{p02}, and
\cite[Lemma 3.11]{HJKN20} (applied for every $s\in[0,T] $
with
$M\gets m $,
$N\gets n$,
$\tau \gets s $,
$a\gets 7958\sqrt{\exponentLP-1} m n  e^{7cT}$,
$b\gets 4\sqrt{\exponentLP-1}\cdot 9  e^{4cT} T^{-{1}/{\exponentLP}} $,
$(f_\ell)_{\ell\in\N_0}\gets (
[s,T]\ni t\mapsto 
\tnorm{U^0_{\ell,m}-u}{t}\in [0,\infty])_{\ell\in \N_0}$ in the notation of
\cite[Lemma 3.11]{HJKN20}) show for all $n,m\in\N$, 
$s\in[0,T]$
 that
\begin{align}
&\tnorm{U^0_{n,m}-u}{s}\leq 
\left[
 7958\sqrt{\exponentLP-1} m n  e^{7cT}+
4\sqrt{\exponentLP-1}\cdot 9  e^{4cT} T^{-{1}/{\exponentLP}}  (T-s)^{\nicefrac{1}{p}}\right]\nonumber \\
&\quad \cdot
\left(\sup_{t\in[s,T]} \tnorm{U^0_{0,m}-u}{t}\right)
 e^{m^{p/2}/p}m^{-n/2}
\left[1+4\sqrt{\exponentLP-1}\cdot 9 e^{4cT} T^{-{1}/{\exponentLP}}  (T-s)^{\nicefrac{1}{p}}\right]^{n-1}\nonumber \\
&\leq 
\left[
7958\sqrt{\exponentLP-1} m n  e^{7cT}+
36\sqrt{\exponentLP-1}\cdot   e^{4cT}\right]
8e^{2cT}
 e^{m^{p/2}/p}m^{-n/2}
\left[1+36\sqrt{\exponentLP-1}\cdot   e^{4cT}\right]^{n-1}\nonumber \\
&\leq \left[
7958 m n  e^{7cT}+
36   e^{4cT}\right](\exponentLP-1)^{\frac{1}{2}} 8e^{2cT}
 e^{m^{p/2}/p}m^{-n/2}\left(1+36 e^{4cT}\right)^{n-1}
(\exponentLP-1)^{\frac{n-1}{2}}\nonumber \\
&\leq 222 m n  e^{4cT}
\left[ 1+
36   e^{4cT}\right] 8 e^{2cT}
 e^{m^{p/2}/p}m^{-n/2}\left(1+36e^{4cT}\right)^{n-1}
(\exponentLP-1)^{\frac{n}{2}} \nonumber \\
&=1776 m n e^{6cT}
 e^{m^{p/2}/p}m^{-n/2}\left(1+ 36 e^{4cT}\right)^{n}
(\exponentLP-1)^{\frac{n}{2}}\nonumber \\
&\leq 1776 m n e^{6cT}
 e^{m^{p/2}/p}m^{-n/2}37^n e^{4 n cT}
(\exponentLP-1)^{\frac{n}{2}}.
\end{align}
This shows \eqref{p04}. The proof of \cref{p05} is thus completed.
\end{proof}

\section{Error estimates for multgrid approximations of FBSDEs}
\label{sec5}
In \cref{s01} below we prove that our multigrid approximations \cref{c10a}
have a computational effort which grows essentially quadratically
in the reciprocal accuracy $\nicefrac{1}{\epsilon}$
and polynomially in the dimension.
We note that we measure the error by $L^p$-norms of path distances.
First,
we apply in \Cref{m01} below the H\"older-norm estimate \cref{h19a}
for
MLP approximations of semilinear PDEs
 to obtain  the error estimate \cref{k05a} for our multigrid approximations
 of FBSDEs.
\begin{theorem}[Error estimate for FBSDEs]\label{m01}Assume \cref{b01},
let $\delta\in (0,\infty)$,
$\exponentLP\in[2,\infty)$,
$\expFirstNorm\in[3,\infty)$, $\expSecondNorm\in [9,\infty)$
satisfy that 
$\frac{\expSecondNorm+1}{\exponentV}+\frac{2}{\exponentX}\leq \frac{1}{\exponentLP} $
and
$\expFirstNorm+2\leq \expSecondNorm$, let
 $(\thetaBar_n)_{n\in\N_0}\subseteq\Theta$,
let $\lfloor \cdot \rfloor_m \colon \R \to \R$, $ m \in \N $, and 
$\lceil \cdot \rceil_m \colon  \R \to \R$, $ m \in \N $, 
satisfy for all $m \in \N$, $t \in [0,T]$ that
$\lfloor t \rfloor_m = \max( ([0,t]\backslash \{T\}) \cap \{ 0, \frac{ T }{ m }, \frac{ 2T }{ m }, \ldots \} )$
and 
$\lceil t \rceil_m = \min(((t,\infty) \cup \{T\})\cap  \{ 0, \frac{ T }{ m }, \frac{ 2T }{ m }, \ldots \} )$,
let $\Yappr^{n,m}=(\Yappr^{n,m}_t)_{t\in[0,T]}
\colon [0,T]\times\Omega\to\R $,
 $n,m\in\N$, 
satisfy  for all $n,m\in\N$, $t\in[0,T]$  that
\begin{align}\label{d04d} 
\Yappr^{n,m}_{t}
&= \sum_{\ell=0}^{n-1}\Biggl[
\left[ \tfrac{ \lceil t \rceil_{m^{\ell+1}} - t }{ ( T / m^{ \ell + 1 } ) } \right]
U^{\thetaBar_\ell}_{n-\ell,m}(\lfloor t \rfloor_{m^{\ell+1}}, \approximationX^{m^n,0,0}_{0,\lfloor t \rfloor_{m^{\ell+1}}})+
\left[\tfrac{ t-\lfloor t \rfloor_{m^{\ell+1}} }{ ( T / m^{ \ell + 1 } ) }\right]
U^{\thetaBar_\ell}_{n-\ell,m}(\lceil t \rceil_{m^{\ell+1}}, \approximationX^{m^n,0,0}_{0,\lceil t \rceil_{m^{\ell+1}}})
\nonumber
\\
&-\1_{\N}(\ell)\biggl(
\left[ \tfrac{ \lceil t \rceil_{m^{\ell}} - t }{ ( T / m^{ \ell  } ) } \right]U^{\thetaBar_\ell}_{n-\ell,m}(\lfloor t \rfloor_{m^{\ell}}, \approximationX^{m^n,0,0}_{0,\lfloor t \rfloor_{m^{\ell}}})+
\left[ \tfrac{ t-\lfloor t \rfloor_{m^{\ell}} }{ ( T / m^{ \ell  } ) } \right]
U^{\thetaBar_\ell}_{n-\ell,m}(\lceil t \rceil_{m^{\ell}}, \approximationX^{m^n,0,0}_{0,\lceil t \rceil_{m^{\ell}}})
\biggr)\Biggr],
\end{align}
Then 
\begin{enumerate}[i)]
 \item \label{q01}there exists a unique measurable  $u\colon [0,T]\times\R^d\to\R$ which satisfies for all $t\in[0,T]$, $x\in \R^d$ that
$\E\bigl[|
g(X_{t,T}^{x})|\bigr]+
\int_{t}^{T} \E\bigl[|f( u(r,X_{t,r}^{x}))|
\bigr]\,d r+\sup_{r\in[0,T],\xi\in\R^d}\frac{|u(r,\xi)|}{V(r,\xi)}
<\infty$ and
$
u(t,x)=
\E\bigl[
g(X_{t,T}^{x})\bigr]+
\int_{t}^{T} \E\bigl[f( u(r,X_{t,r}^{x}))
\bigr]\,dr
$, 
\item \label{q02}it holds
for all $t\in[0,T]$, $s\in[0,T]\setminus\{t\}$  that
$
\left\lVert
u(t,X_{0,t}^0 )\right\rVert_{p}\leq 
2e^{2cT}
(V(0,0))^{\expFirstNorm}
$ and $
\tfrac{\sqrt{T}\left\lVert
u(t,X_{0,t}^0 )
-u(s,X_{0,s}^0 )
\right\rVert_{p}}{\lvert t-s\rvert^{\nicefrac{1}{2}}}
\leq 
12  e^{2cT}
 (V(0,0))^{\expSecondNorm+1}$, and
\item \label{k14}it holds for all $n,m\in\N$, $t\in [0,T]$  that
\begin{align}\label{k05a}
\left\lVert \Yappr^{n,m}_{t}-  u(t,X^0_{0,t}) \right\rVert_{\exponentLP}\leq 959 m  e^{6cT}
 e^{m^{p/2}/p}m^{-n/2}
74^{n} e^{4 n cT}
(\exponentLP-1)^{\frac{n}{2}} (V(0,0))^{\expSecondNorm+1}.
\end{align} 

\end{enumerate}
\end{theorem}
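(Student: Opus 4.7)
The plan is to handle the three claims in order, with (iii) being the substantive one.

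For claim \eqref{q01}, I observe that \cref{b01} supplies everything needed to invoke \cref{b02}\eqref{k04}. Indeed, \eqref{h21} combined with \eqref{s11} and the convergence $\approximationX^{n,0,x}_{s,t}\to X^x_{s,t}$ in $L^{p_2}$ (argued via Fatou from \eqref{h21b}) yields the spatial Lipschitz estimate \eqref{l03} and the short-time estimate $\|\|X^x_{s,t}-x\|\|_{p_2}\leq V(s,x)|t-s|^{1/2}$ required as \eqref{l05}; the second inequality in \eqref{s10} together with the same Fatou argument gives $\|V(t,X^x_{s,t})\|_{p_1}\leq V(s,x)$. Hence \cref{b02}\eqref{k04} applies verbatim.

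For claim \eqref{q02}, I combine \cref{b02}\eqref{k03} with these same Lyapunov bounds. The pointwise estimate $|u(t,x)|\le 2e^{c(T-t)}V(t,x)$ together with $\|V(t,X^0_{0,t})\|_{p_3}\le V(0,0)$ (which uses $\exponentLP q_1\le q_1+1\le \exponentV/\expFirstNorm$ and Jensen) gives the bound on $\|u(t,X^0_{0,t})\|_p$. For the time-increment estimate I apply the spatial/temporal Hölder bound from \cref{b02}\eqref{k03} together with \eqref{h21c} and Hölder's inequality, weighting the exponent so that $(V(0,0))^{q_2+1}$ absorbs both the $V^3$ prefactor from \cref{b02} and the $V$ coming from $\|\|X^0_{0,t}-X^0_{0,s}\|\|_{p_2}\le V(0,0)\,|t-s|^{1/2}$.

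For the main claim \eqref{k14}, I set $Y_t:=u(t,X^0_{0,t})$, $\tilde Y^{(\ell)}_t:=U^{\thetaBar_\ell}_{n-\ell,m}(t,\approximationX^{m^n,0,0}_{0,t})$, and let $\mathscr L_k$ denote piecewise-linear interpolation on the grid $\{0,T/m^k,\dots,T\}$. A direct inspection of \eqref{d04d} identifies $\Yappr^{n,m}_t=\sum_{\ell=0}^{n-1}\!\big[\mathscr L_{\ell+1}[\tilde Y^{(\ell)}](t)-\1_{\N}(\ell)\mathscr L_\ell[\tilde Y^{(\ell)}](t)\big]$. Adding and subtracting $\mathscr L_{k}[Y]$ and using the telescoping identity $\sum_{\ell=0}^{n-1}(\mathscr L_{\ell+1}[Y]-\1_{\N}(\ell)\mathscr L_\ell[Y])=\mathscr L_n[Y]$ yields the decomposition
\begin{equation*}
\Yappr^{n,m}_t-Y_t=\bigl(\mathscr L_n[Y](t)-Y_t\bigr)+\sum_{\ell=0}^{n-1}\bigl(\mathscr L_{\ell+1}[\tilde Y^{(\ell)}-Y](t)-\1_{\N}(\ell)\mathscr L_\ell[\tilde Y^{(\ell)}-Y](t)\bigr).
\end{equation*}
The first term is an interpolation error on the finest grid and is controlled by $m^{-n/2}$ times the Hölder modulus from part \eqref{q02}. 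For the $\ell$-th summand, since $\mathscr L_k$ is a convex combination of its arguments at two grid points within $T/m^k$ of $t$, one has $\|\mathscr L_{\ell+1}[h](t)-\mathscr L_\ell[h](t)\|_p\le 2\,\omega_h(T/m^\ell)$, where $\omega_h(\delta):=\sup_{|s_1-s_2|\le\delta}\|h(s_1)-h(s_2)\|_p$ (for $\ell=0$ one replaces this by the straight supremum of $\|h(s)\|_p$, which is acceptable). I then apply this with $h=\tilde Y^{(\ell)}-Y$ and estimate its Hölder modulus via the full norm $\tnorm{U^{\thetaBar_\ell}_{n-\ell,m}-u}{0}$ provided by \cref{p05}\eqref{p04}, using the regularity bounds \eqref{h21}, \eqref{h21b}, \eqref{h21c}, and the Hölder bound on $u$ from part \eqref{q02} together with Lyapunov estimates to evaluate everything at $(\approximationX^{m^n,0,0}_{0,\cdot},X^0_{0,\cdot})$.

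The main obstacle is bookkeeping: the $\ell$-th summand must produce a factor $(T/m^\ell)^{1/2}$ from the interpolation quotient and a factor $\lesssim 37^{n-\ell}(p-1)^{(n-\ell)/2} m^{-(n-\ell)/2}$ from \cref{p05}\eqref{p04}, so that their product gives $m^{-n/2}$ times the claimed geometric factor $74^n$; the constants have to be tracked carefully so that the polynomial $V(0,0)^{q_2+1}$ absorbs the various products of the form $V^{q_2}\cdot V$ arising from the $\tnorm{\cdot}{2,0}$-norm and the transport along $\approximationX$ and $X$. Summing the geometric-type bound over $\ell\in\{0,\dots,n-1\}$ and absorbing the term $(\mathscr L_n[Y](t)-Y_t)$ (which is of the same $m^{-n/2}$ order) into the leading constant $959\,m\,e^{6cT}$ then yields \eqref{k05a}.
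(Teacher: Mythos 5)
Parts \eqref{q01} and \eqref{q02} are handled the same way as in the paper: invoke \cref{b02} via \cref{p05}, and transport the pointwise/H\"older bounds of $u$ through $X^0_{0,\cdot}$ using the Lyapunov and increment estimates. That portion of the proposal is fine.

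For part \eqref{k14} the proposal uses the same telescope-and-interpolate idea that underlies the paper's appeal to \cite[Lemma~2.3]{hutzenthaler2021overcoming}, but with a different choice of reference process, and this is where a genuine gap opens. The paper first compares $\Yappr^{n,m}_t$ to $u(t,\approximationX^{m^n,0,0}_{0,t})$, so that \emph{every} process appearing inside the multigrid telescope is evaluated along the same discretized forward path $\approximationX^{m^n,0,0}_{0,\cdot}$; only afterwards is $u(t,\approximationX^{m^n,0,0}_{0,t})-u(t,X^0_{0,t})$ added once via \eqref{k12}. You instead set $Y_t=u(t,X^0_{0,t})$ while $\tilde Y^{(\ell)}_t=U^{\thetaBar_\ell}_{n-\ell,m}(t,\approximationX^{m^n,0,0}_{0,t})$, so the process $h=\tilde Y^{(\ell)}-Y$ that enters the $\ell$-th interpolation difference mixes $\approximationX^{m^n}$ and $X$. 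Splitting $h=A+B$ with $A=(U^{\thetaBar_\ell}_{n-\ell,m}-u)(\cdot,\approximationX^{m^n}_{0,\cdot})$ and $B=u(\cdot,\approximationX^{m^n}_{0,\cdot})-u(\cdot,X^0_{0,\cdot})$, your Hölder-modulus bound handles $A$ via $\tnorm{U^{\thetaBar_\ell}_{n-\ell,m}-u}{2,\cdot}$ exactly as in \eqref{k11} and gives the needed $m^{-n/2}$. But you cite ``the Hölder bound on $u$ from part \eqref{q02}'' for $B$: that only yields $\omega_B(T/m^\ell)\lesssim (T/m^\ell)^{1/2}\,V^{\expSecondNorm+1}/\sqrt{T}=m^{-\ell/2}V^{\expSecondNorm+1}$, which has the grid-step smallness but \emph{not} the $m^{-n/2}$ Euler-error smallness, so the sum over $\ell$ then stalls around $m^{-1/2}$. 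To salvage the argument you must instead drop the modulus entirely and use the crude bound $\omega_B(T/m^\ell)\le 2\sup_s\|B(s)\|_{\exponentLP}\lesssim m^{-n/2}V^{\expSecondNorm+1}$ (i.e.\ \eqref{k12}), which costs an extra factor $n$ on summing over $\ell$; that is absorbable into $74^n e^{4ncT}$, but it changes the bookkeeping so that the precise constants $959$ and $74$ in \eqref{k05a} would no longer come out, and it is a detail your sketch never addresses. The cleaner fix is simply to telescope against $u(\cdot,\approximationX^{m^n,0,0}_{0,\cdot})$ as the paper does, so that no cross-term $B$ ever enters the sum.
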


\begin{proof}[Proof of \cref{m01}]
Throughout  this proof
for every $s\in[0,T]$  and  
every
random field $H\colon [0,T]\times\R^d \times\Omega\to \R$ 
let 
$\tnorm{H}{s},\tnorm{H}{1,s},\tnorm{H}{2,s}\in[0,\infty]$  
$ $
 satisfy that
\begin{gather}\begin{split}
\tnorm{H}{s}&=\max \left\{\tnorm{H}{1,s},\tnorm{H}{2,s}\right\},\quad 
\tnorm{H}{1,s}=  \sup_{t\in[s,T]}
\sup_{x\in\R^d}\frac{\lVert H(t,x)\rVert_{\exponentLP}}{ 
(V(t,x))^{\expFirstNorm}}, \quad\text{and} 
\\
\tnorm{H}{2,s}&=\sup_{
\substack{
t_1,t_2\in[s,T],
x_1,x_2\in\R^d\colon \\(t_1,x_1)\neq (t_2,x_2)
} } \frac{\left\lVert H(t_1,x_1)-H(t_2,x_2)\right\rVert_{\exponentLP}}{
\tfrac{(V(t_1,x_1))^{\expSecondNorm}+(V(t_2,x_2))^{\expSecondNorm}}{2}
\left[
\tfrac{V(t_1,x_1)+V(t_2,x_2)}{2}
\tfrac{ \lvert t_1-t_2\rvert^{\nicefrac{1}{2}}}{\sqrt{T}}
+\tfrac{\lVert x_1-x_2\rVert}{\sqrt{T}}
\right]}.
\end{split}\label{k06}
\end{gather}
Then
\cref{p05}, the assumptions of \cref{m01}, the fact that
$\forall\, m\in \N\colon U^0_{0,m}=0$, and the fact that $\forall\,n\in \N_0\colon 2n\leq 2^n$
 show that
\begin{enumerate}[a)]\itemsep0pt
\item there exists a unique measurable  $u\colon [0,T]\times\R^d\to\R$ which satisfies for all $t\in[0,T]$, $x\in \R^d$ that
$\E\bigl[|
g(X_{t,T}^{x})|\bigr]+
\int_{t}^{T} \E\bigl[|f( u(r,X_{t,r}^{x}))|
\bigr]\,d r+\sup_{r\in[0,T],\xi\in\R^d}\frac{|u(r,\xi)|}{V(r,\xi)}<\infty$ and
$
u(t,x)=
\E\bigl[
g(X_{t,T}^{x})\bigr]+
\int_{t}^{T} \E\bigl[f( u(r,X_{t,r}^{x}))
\bigr]\,dr
$,
 \item for all $s\in[0,T]$ it holds  that 
\begin{align}
\tnorm{u}{1,s}\leq 2e^{cT},\quad \tnorm{u}{2,s}\leq 8 e^{2cT},\quad\text{and}\quad
\tnorm{u}{s}\leq 8 e^{2cT},\label{k07}
\end{align}
and
\item 
for all $n\in\N_0$, $m\in\N$ , 
$t\in[0,T]$ it holds that
\begin{align}\label{k09}
\tnorm{U^0_{n,m}-u}{t}\leq  888 m  e^{6cT}
 e^{m^{p/2}/p}m^{-n/2}74^n e^{4 n cT}
(\exponentLP-1)^{\frac{n}{2}} .
\end{align}
\end{enumerate} 
This shows \eqref{q01}.

Next, \eqref{s10} and \eqref{h21b}
show for all $ t\in [0,T]$, $n\in\N$ that
\begin{equation}
\begin{split}
&
\left\lVert \left\lVert 
 \approximationX^{n,0,0}_{0,t} -
X^{0}_{0,t}\right\rVert\right\rVert_{\exponentX}
=\xeqref{s10}
\left\lVert \left\lVert 
 \bigl( \approximationX^{n,0,0}_{0,t} -
X^{0}_{0,t}\bigr)
-   \bigl(\approximationX^{n,0,0}_{0,0} - X^{0}_{0,0}\bigr)\right\rVert\right\rVert_{\exponentX}
\leq\xeqref{h21b}
\tfrac{\sqrt{T}V(0,0)
}{2\sqrt{n}}.
\end{split}\label{m10}
\end{equation}
This, the triangle inequality, and \eqref{h21} show for all $s,t\in[0,T]$ that
\begin{align}
&\left\lVert \left\lVert 
X_{0,t}^{0}-
X_{0,s}^{0}\right\rVert\right\rVert_{\exponentX}
\leq
 \limsup_{ n\to\infty}\left[
\left\lVert \left\lVert 
X_{0,t}^{0}-
\approximationX_{0,t}^{n,0,0}
\right\rVert
\right\rVert_{\exponentX}
+\left\lVert \left\lVert 
\approximationX_{0,t}^{n,0,0}-
\approximationX_{0,s}^{n,0,0}\right\rVert\right\rVert_{\exponentX}
+\left\lVert\left\lVert \approximationX_{0,s}^{n,0,0}-
X_{0,s}^{0}\right\rVert\right\rVert_{\exponentX}\right] \nonumber\\
&\leq\xeqref{h21} V(0,0)
\tfrac{\lvert t-s\rvert^{\nicefrac{1}{2}}}{2}.\label{t11}
\end{align}
In addition, \eqref{m10}, continuity of $V$, and \eqref{s10} show for all $t\in[0,T]$ that
\begin{align}\label{t10}
\bigl\lVert V\bigl(t,X_{0,t}^{0}\bigr)\bigr\rVert_{\exponentV}= 
\lim_{n\to\infty}
\bigl\lVert V\bigl(t,\approximationX_{0,t}^{n,0,0}\bigr)\bigr\rVert_{\exponentV}
\leq V(0,0).
\end{align}
This, \eqref{k06}, and \eqref{k07}
show for all $t\in[0,T]$ that
\begin{align}
\left\lVert
u(t,X_{0,t}^0 )\right\rVert_{p}\leq\xeqref{k06}  \tnorm{u}{1,t}\left\lVert
(V(t,X_{0,t}^0 ))^{\expFirstNorm}\right\rVert_{\exponentLP}\leq \xeqref{k07}
2e^{2cT}
\xeqref{t10}
(V(0,0))^{\expFirstNorm}.\label{t12}
\end{align}
Furthermore, \eqref{k06}, the triangle inequality, H\"older's inequality, the fact that
$\max\{\frac{\expSecondNorm}{\exponentV}+\frac{1}{\exponentV}, \frac{\expSecondNorm}{\exponentV}+\frac{1}{\exponentX}\}\leq \frac{1}{\exponentLP} $,
\eqref{k07}, \eqref{t10}, and \eqref{t11} show for all $t,s\in[0,T]$ that
\begin{align}
&
\left\lVert
u(t,X_{0,t}^0 )
-u(s,X_{0,s}^0 )
\right\rVert_{p}\leq \xeqref{k06} \tnorm{u}{t}\left\lVert
\tfrac{(V(t,X_{0,t}^0 ))^{\expSecondNorm}
+(V(s,X_{0,s}^0 ))^{\expSecondNorm}}{2}
\left[\tfrac{V(s,X_{0,s}^0)+V(t,X_{0,t}^0)}{2}
\tfrac{\lvert t-s\rvert^{\nicefrac{1}{2}}}{\sqrt{T}}
+\tfrac{\lVert X_{0,t}^0 -X_{0,s}^0\rVert}{\sqrt{T}}
\right]
\right\rVert_{p}\nonumber\\
&\leq 
\tnorm{u}{t}\left\lVert
\tfrac{(V(t,X_{0,t}^0 ))^{\expSecondNorm}
+(V(s,X_{0,s}^0 ))^{\expSecondNorm}}{2}
\right\rVert_{\frac{\exponentV}{\expSecondNorm}}\left[
\tfrac{\left\lVert V(s,X_{0,s}^0 )\right\rVert_{\exponentV}+
\left\lVert V(t,X_{0,t}^0 )\right\rVert_{\exponentV}
}{2}
\tfrac{\lvert t-s\rvert^{\nicefrac{1}{2}}}{\sqrt{T}}
+\left\lVert\tfrac{\lVert X_{0,t}^0 -X_{0,s}^0\rVert}{\sqrt{T}}\right\rVert_{\exponentX}
\right]\nonumber\\
&\leq \xeqref{k07}
8 e^{2cT}
 \xeqref{t10}(V(0,0))^{\expSecondNorm}\left[
V(0,0)
\tfrac{\lvert t-s\rvert^{\nicefrac{1}{2}}}{\sqrt{T}}+
V(0,0)\xeqref{t11}
\tfrac{\lvert t-s\rvert^{\nicefrac{1}{2}}}{2\sqrt{T}}
\right]=12  e^{2cT}
 (V(0,0))^{\expSecondNorm+1}\tfrac{\lvert t-s\rvert^{\nicefrac{1}{2}}}{\sqrt{T}}.\label{t20}
\end{align}
This  shows for all $t\in[0,T]$, $s\in[0,T]\setminus\{t\}$ that
\begin{align}\begin{split}
&
\tfrac{\sqrt{T}\left\lVert
u(t,X_{0,t}^0 )
-u(s,X_{0,s}^0 )
\right\rVert_{p}}{\lvert t-s\rvert^{\nicefrac{1}{2}}}
\leq 
\tfrac{\sqrt{T}}{\lvert t-s\rvert^{\nicefrac{1}{2}}}
\xeqref{t20}
12  e^{2cT}
 (V(0,0))^{\expSecondNorm+1}\tfrac{\lvert t-s\rvert^{\nicefrac{1}{2}}}{\sqrt{T}}= 
12  e^{2cT}
 (V(0,0))^{\expSecondNorm+1}.\label{t14}\end{split}
\end{align}
This and \eqref{t12} prove \eqref{q02}.

Moreover, \eqref{k06}, \eqref{k07}, H\"older's inequality, 
the fact that
$\frac{\expSecondNorm}{\exponentV} + \frac{1}{\exponentX}\leq \frac{1}{\exponentLP}$,
the triangle inequality,  \eqref{s10}, \eqref{t10}, and \eqref{m10} show for all $t\in[0,T]$, $m,n\in \N$ that
\begin{align}\begin{split}
&\left\lVert
u(t,\approximationX^{m^n,0,0}_{0,t})-
u(t,X^0_{0,t})\right\rVert_{\exponentLP}\leq
\xeqref{k06}
 \tnorm{u}{t}
 \left\lVert 
\tfrac{ \left(V(t,\approximationX^{m^n,0,0}_{0,t})\right)^{\expSecondNorm}
+\left(V(t,X^0_{0,t})  \right)^{\expSecondNorm}}{2}
\tfrac{\left\lVert \approximationX^{m^n,0,0}_{0,t}-X^0_{0,t}\right\rVert}{\sqrt{T}}
\right\rVert_{\exponentLP}\\
&
\leq \xeqref{k07}8 e^{2cT}
\left\lVert 
\tfrac{ \left(V(t,\approximationX^{m^n,0,0}_{0,t})\right)^{\expSecondNorm}
+
\left(V(t,X^0_{0,t})  \right)^{\expSecondNorm}}{2}
\right\rVert_{\frac{\exponentV}{\expSecondNorm}}
\left\lVert 
\tfrac{\left\lVert \approximationX^{m^n,0,0}_{0,t}-X^0_{0,t}\right\rVert}{\sqrt{T}}
\right\rVert_{\exponentX}\\
&
\leq 8 e^{2cT}
\xeqref{s10}\xeqref{t10}
(V(0,0))^{\expSecondNorm}\xeqref{m10}\tfrac{V(0,0)}{2\sqrt{m^n}}
= 4e^{2cT}m^{-n/2}
(V(0,0))^{\expSecondNorm+1}.
\end{split}\label{k12}\end{align}
Next, \eqref{k06},  the triangle inequality, H\"older's inequality, 
the fact that
$\frac{\expSecondNorm}{\exponentV}
+\frac{1}{\exponentV}
\leq \frac{1}{\exponentLP}$, the fact that
$\frac{\expSecondNorm}{\exponentV}+\frac{1}{\exponentX}\leq \frac{1}{\exponentLP}$, \eqref{k09}, \eqref{s10}, and \eqref{h21}
show for all $n\in \N_0$, $\ell\in \{0,1,\ldots,n\}$, $s\in [0,T]$,
$t\in[s,T]$
 that
\begin{align}
&
\left\lVert (U^{\thetaBar_{n-\ell}}_{n-\ell,m}-u)(t,\approximationX^{m^n,0,0}_{0,t})-(U^{\thetaBar_{n-\ell}}_{n-\ell,m}-u)(s,\approximationX^{m^n,0,0}_{0,s})\right\rVert_{\exponentLP}\nonumber\\
&\leq \xeqref{k06}
\tnorm{U^{\thetaBar_{n-\ell}}_{n-\ell,m}-u}{s}\nonumber\\
&\qquad
\left\lVert
\tfrac{\left(V(t,\approximationX^{m^n,0,0}_{0,t})\right)^{\expSecondNorm}
+
\left(V(s,\approximationX^{m^n,0,0}_{0,s})\right)^{\expSecondNorm}}{2}
\left[\tfrac{V(s,\approximationX^{m^n,0,0}_{0,s})
+V(t,\approximationX^{m^n,0,0}_{0,t})}{2}
\tfrac{
\lvert t-s\rvert^{\nicefrac{1}{2}}}{\sqrt{T}}
+
\tfrac{\left\lVert
\approximationX^{m^n,0,0}_{0,t}-
\approximationX^{m^n,0,0}_{0,s}\right\rVert}{\sqrt{T}}
\right]
\right\rVert_{\exponentLP}\nonumber \\
&\leq \tnorm{U^{\thetaBar_{n-\ell}}_{n-\ell,m}-u}{s}\left\lVert
\tfrac{\left(V(t,\approximationX^{m^n,0,0}_{0,t})\right)^{\expSecondNorm}
+
\left(V(s,\approximationX^{m^n,0,0}_{0,s})\right)^{\expSecondNorm}}{2}\right\rVert_{\frac{\exponentV}{\expSecondNorm}}\nonumber\\
&\qquad
\left[
\tfrac{\left\lVert V(s,\approximationX^{m^n,0,0}_{0,s})\right\rVert_{\exponentV}+
\left\lVert V(t,\approximationX^{m^n,0,0}_{0,t})\right\rVert_{\exponentV}
}{2}
\tfrac{
\lvert t-s\rvert^{\nicefrac{1}{2}}}{\sqrt{T}}+
\left\lVert
\tfrac{\left\lVert
\approximationX^{m^n,0,0}_{0,t}-
\approximationX^{m^n,0,0}_{0,s}\right\rVert}{\sqrt{T}}\right\rVert_{\exponentX}
\right]\nonumber\\
&\leq\xeqref{k09} 888 m  e^{6cT}
 e^{m^{p/2}/p}m^{-(n-\ell)/2}74^{n-\ell} e^{4 (n-\ell) cT}
(\exponentLP-1)^{\frac{n-\ell}{2}}\xeqref{s10}(V(0,0))^{\expSecondNorm} \left[
\tfrac{\xeqref{s10}V(0,0)\lvert t-s\rvert^{\nicefrac{1}{2}}}{\sqrt{T}}+\xeqref{h21}\tfrac{V(0,0)\lvert t-s\rvert^{\nicefrac{1}{2}}}{2\sqrt{T}}
\right]\nonumber\\
&= 1332 m  e^{6cT}
 e^{m^{p/2}/p}m^{-(n-\ell)/2}74^{n-\ell} e^{4 (n-\ell) cT}
(\exponentLP-1)^{\frac{n-\ell}{2}} \tfrac{(V(0,0))^{\expSecondNorm+1}\lvert t-s\rvert^{\nicefrac{1}{2}}}{\sqrt{T}}.\label{k11}
\end{align}
Next, \eqref{k06}, \eqref{k09}, \eqref{s10}, Jensen's inequality, and the fact that
$ \expFirstNorm \exponentLP\leq \exponentV$ show for all
$n,m\in\N $, $t\in[0,T]$ that
\begin{align}\begin{split}
&
\left\lVert U^{\thetaBar_n}_{n,m}(t,\approximationX^{m^n,0,0}_{0,t})-u(t,\approximationX^{m^n,0,0}_{0,t})\right\rVert_{\exponentLP}
\leq \xeqref{k06}
\tnorm{U^{\thetaBar_n}_{n,m} -u}{t} \left\lVert\left(V(t, \approximationX^{m^n,0,0}_{0,t})\right)^{\expFirstNorm}\right\rVert_{\exponentLP}
\\
&
\leq\xeqref{k09} 888 m  e^{6cT}
 e^{m^{p/2}/p}m^{-n/2}74^n e^{4 n cT}
(\exponentLP-1)^{\frac{n}{2}} \xeqref{s10}(V(0,0))^{\expFirstNorm}.
\end{split}\label{k11b}\end{align}
This, \cite[Lemma~2.3]{hutzenthaler2021overcoming}
 (applied for every $n,m\in\N$ with 
$V\gets 
\{\mathfrak{Z}\colon \Omega\to\R \colon\mathfrak{Z} \text{ is }
\text{measurable}
\}
$,
$\lVert \cdot \rVert \gets  \lVert\cdot\rVert_{\exponentLP}$,
$\alpha\gets 1/2$,
$(m_\ell)_{\ell\in \{1,2,\ldots,n\}}\gets (m^\ell)_{\ell\in \{1,2,\ldots,n\}}$,
$
(\tau_{\ell,k})_{k\in  \{0,1,\ldots,m_\ell\}, \ell\in \{1,2,\ldots,n\}}
\gets 
(\frac{kT}{m^\ell})_{k\in  \{0,1,\ldots,m^\ell\}, \ell\in \{1,2,\ldots,n\}}
$,
$(Y^0_t)_{t\in[0,T]}\gets (u(t,\approximationX^{m^n,0,0}_{0,t}))_{t\in[0,T]}$,
$((Y^\ell_t)_{t\in[0,T]}
)_{\ell\in[1,n]\cap\N}
\gets
((U^{\thetaBar_\ell}_{\ell,m}(t,\approximationX^{m^n,0,0}_{0,t}))_{t\in[0,T]})_{\ell\in[1,n]\cap\N}$,
$\Yappr \gets \Yappr^{n,m}$ in the notation  of \cite[Lemma~2.3]{hutzenthaler2021overcoming}),
the fact that
$\forall\,m\in\N,\theta\in\Theta\colon U_{0,m}^\theta=0$,
\eqref{k11}, the fact that $V\geq 1$, 
the fact that $888\leq \frac{1332}{\sqrt{2}}$,
and
the fact that $\forall\,n\in \N\colon 
\frac{1332}{\sqrt{2}}\sum_{\ell=0}^{n}74^{n-\ell}= \frac{1332}{\sqrt{2}}\frac{74^{n+1}-1}{73}\leq 955\cdot 74^n$
 show  for all $n,m\in\N$  that
\begin{align}
&
\sup_{t\in[0,T]} \left\lVert\Yappr^{n,m}_t-u(t,\approximationX^{m^n,0,0}_{0,t})\right\rVert_{\exponentLP}
\leq \sup_{t\in[0,T]} \left\lVert U^{\thetaBar_n}_{n,m}(t,\approximationX^{m^n,0,0}_{0,t})-u(t,\approximationX^{m^n,0,0}_{0,t})\right\rVert_{\exponentLP} \nonumber\\
&\quad
 + T^{1/2}2^{-1/2}m^{-n/2}
\left[\sup_{t,s\in[0,T]\colon t\neq s}\tfrac{ \left\lVert u(t,\approximationX^{m^n,0,0}_{0,t})-u(s,\approximationX^{m^n,0,0}_{0,s})\right\rVert_{\exponentLP}}{\lvert t-s\rvert^{\nicefrac{1}{2}}}\right] \nonumber\\
& \quad+
\sum_{\ell=1}^{n-1}\left[T^{1/2}2^{-1/2}m^{-\ell/2}\left[\sup_{t,s\in[0,T]\colon t\neq s}
\tfrac{ \left\lVert
(U^{\thetaBar_{n-\ell}}_{n-\ell,m}-u)(t,\approximationX^{m^n,0,0}_{0,t})
-
(U^{\thetaBar_{n-\ell}}_{n-\ell,m}-u)(s,\approximationX^{m^n,0,0}_{0,s})
\right\rVert_{\exponentLP}}{\lvert t-s\rvert^{\nicefrac{1}{2}}}\right]\right] \nonumber\\
&= \sup_{t\in[0,T]} \left\lVert U^{\thetaBar_n}_{n,m}(t,\approximationX^{m^n,0,0}_{0,t})-u(t,\approximationX^{m^n,0,0}_{0,t})\right\rVert_{\exponentLP} \nonumber\\
&\quad+\sum_{\ell=1}^{n}\left[T^{1/2}2^{-1/2}m^{-\ell/2}\left[\sup_{t,s\in[0,T]\colon t\neq s}
\tfrac{ \left\lVert
(U^{\thetaBar_{n-\ell}}_{n-\ell,m}-u)(t,\approximationX^{m^n,0,0}_{0,t})
-
(U^{\thetaBar_{n-\ell}}_{n-\ell,m}-u)(s,\approximationX^{m^n,0,0}_{0,s})
\right\rVert_{\exponentLP}}{\lvert t-s\rvert^{\nicefrac{1}{2}}}\right]\right]
 \nonumber\\
&\leq \xeqref{k11b}
888 m  e^{6cT}
 e^{m^{p/2}/p}m^{-n/2}74^n e^{4 n cT}
(\exponentLP-1)^{\frac{n}{2}} (V(0,0))^{\expFirstNorm}\nonumber\\
&\quad +\sum_{\ell=1}^{n}\left[T^{1/2}2^{-1/2}m^{-\ell/2}\xeqref{k11}1332 m  e^{6cT}
 e^{m^{p/2}/p}m^{-(n-\ell)/2}74^{n-\ell} e^{4 (n-\ell) cT}
(\exponentLP-1)^{\frac{n-\ell}{2}} \tfrac{(V(0,0))^{\expSecondNorm+1}
}{\sqrt{T}}\right]\nonumber\\
&\leq \sum_{\ell=0}^{n}\left[\tfrac{1332}{\sqrt{2}} m  e^{6cT}
 e^{m^{p/2}/p}m^{-n/2}74^{n-\ell} e^{4 (n-\ell) cT}
(\exponentLP-1)^{\frac{n-\ell}{2}} (V(0,0))^{\expSecondNorm+1}\right]\nonumber\\
&\leq 
 m  e^{6cT}
 e^{m^{p/2}/p}m^{-n/2}\left[\tfrac{1332}{\sqrt{2}}\sum_{\ell=0}^{n}74^{n-\ell}\right]
 e^{4 n cT}
(\exponentLP-1)^{\frac{n}{2}} (V(0,0))^{\expSecondNorm+1}\nonumber\\
&\leq 
955 m  e^{6cT}
 e^{m^{p/2}/p}m^{-n/2}
74^{n} e^{4 n cT}
(\exponentLP-1)^{\frac{n}{2}} (V(0,0))^{\expSecondNorm+1}.\label{t23}
\end{align}
This, the triangle inequality, and
\eqref{k12} show for all $n,m\in\N$, $t\in [0,T]$ that
\begin{align}
&
\left\lVert
\Yappr^{n,m}_{t}- u(t,X^0_{0,t})\right\rVert_{\exponentLP}\leq 
\left\lVert \Yappr^{n,m}_{t}- u(t,\approximationX^{m^n,0,0}_{0,t}) \right\rVert_{\exponentLP}+
\left\lVert
u(t,\approximationX^{m^n,0,0}_{0,t})-
u(t,X^0_{0,t})\right\rVert_{\exponentLP}\nonumber\\
&\leq \xeqref{t23}955 m  e^{6cT}
 e^{m^{p/2}/p}m^{-n/2}
74^{n} e^{4 n cT}
(\exponentLP-1)^{\frac{n}{2}} (V(0,0))^{\expSecondNorm+1}+\xeqref{k12}
4e^{2cT}m^{-n/2}
(V(0,0))^{\expSecondNorm+1}\nonumber\\
&\leq 959 m  e^{6cT}
 e^{m^{p/2}/p}m^{-n/2}
74^{n} e^{4 n cT}
(\exponentLP-1)^{\frac{n}{2}} (V(0,0))^{\expSecondNorm+1}.\label{k14b}
\end{align}
This shows \eqref{k14}.
The proof of \cref{m01}
is thus completed.
\end{proof}

We discuss the 
computational effort of the approximation method in \cref{s02} below; see
\eqref{c10}--\eqref{c09}. First, \eqref{c10} was explained in the paragraph above \cref{c01}. Next, \eqref{c09} is explained as follows.
For every $d,m,n\in\N$ we denote by $\FEY_{d,m,n}$ the upper bound for
the computational effort to have one realization of the process
$(\Yappr^{d,n,m}_{ {kT}/{m^n}})_{k\in\{0,1,2\ldots,m^n\}} $. For every $d,m,n\in\N$ we think of
$a_3(d)(m^n+1)$ as the upper bound for the computational effort 
to have 
one realization of the process 
$(\approximationX^{d,m^n,0,0}_{0,kT/m^n})_{k\in\{0,1,\ldots,m^n\}}$.
Here, we use Euler--Maruyama approximations; see \eqref{p15}.
Furthermore, in \eqref{c10a}
  for every $d,m,n\in\N$, $\ell\in \{0,1,\ldots,n-1\}$ we need
to evaluate 
$U^{d,\ell}_{d,n-\ell,m}$ at $m^\ell+1 $ space-time points.

\begin{corollary}[Complexity analysis]\label{s02}
Let $\lVert \cdot\rVert\colon \bigcup_{k,\ell\in\N}\R^{k\times \ell}\to[0,\infty)$ satisfy for all $k,\ell\in\N$, $s=(s_{ij})_{i\in[1,k]\cap\N,j\in [1,\ell]\cap\N}\in\R^{k\times \ell}$ that
$\lVert s\rVert^2=\sum_{i=1}^{k}\sum_{j=1}^{\ell}\lvert s_{ij}\rvert^2$,
let $T,\delta \in (0,\infty)$,
$\beta,c,p\in [2,\infty)$,
 $\Theta=\cup_{n\in\N}\Z^n$,
$b,a_1,a_2,a_3\colon \N\to\N$ satisfy that
 $p>2/\delta$ and
$
 \inf_{ \gamma\in (0,\infty)}\sup_{d\in\N}
\tfrac{b(d)+
\sum_{i=1}^{3}a_i(d)}{\gamma d^\gamma}\leq 1
,
$
for every $d\in\N$ let
$f_d\in C^2(\R,\R)$, 
$g_d\in C^2(\R^d,\R)$,
$u_d\in C^{1,2}([0,T]\times\R^d,\R)$,
$\mu_d=(\mu_{d,i})_{i\in[1,d]\cap\Z}\in C^2(\R^d,\R^d)$,
$\sigma_d =(\sigma_{d,i,j})_{i,j\in[1,d]\cap\Z}\in C^2(\R^d,\R^{d\times d})$, 
assume for all
$d\in\N$,
 $x,y,z\in\R^d$, 
 $w\in\R$, $t\in[0,T]$ that
\begin{align}\label{p17}
\lvert Tf_d(0)\rvert+\lVert \mu_d(0)\rVert+\lVert\sigma_d(0)\rVert\leq b(d),\quad 
\lvert u_d(t,x)\rvert +
\lvert g_d(x)\rvert\leq \bigl[b(d)+c^2\lVert x\rVert^2\bigr]^{\beta},
\end{align}
\begin{align}
\lvert (\totalD g_d (x))(y)\rvert\leq b(d)\lVert y\rVert,\quad  \lvert (\totalD^2 g_d(x))(y,z)\rvert\leq b(d)\lVert y\rVert
\lVert  z\rVert,\quad \lvert f_d'(w)\rvert+\lvert f_d''(w)\rvert\leq c,\label{p18}
\end{align}
\begin{align}
\lVert (\totalD \mu_d (x))(y)\rVert\leq c\lVert y\rVert,\quad 
\lVert (\totalD \sigma_d (x))(y)\rVert\leq c\lVert y\rVert,\label{p19}
\end{align}
\begin{align}\label{p20}
\lVert (\totalD^2 \mu_d (x))(y,z)\rVert\leq b(d)\lVert y\rVert\lVert z\rVert,\quad 
\lVert (\totalD^2 \sigma_d (x))(y,z)\rVert\leq b(d)\lVert y\rVert\lVert z\rVert,
\end{align}
\begin{align}
(\tfrac{\partial}{\partial t}u_d)(t,x)+\left\langle\mu_d(x), (\nabla_xu_d)(t,x)\right\rangle+\tfrac{1}{2} \mathrm{trace}\left(
\sigma_{d} (x)
(\sigma_{d}(x))^*
(\mathrm{Hess}_xu_d)(t,x)\right)=
-f_d(u_d(t,x)),\label{p10}
\end{align}
and $u_d(T,x)=g_d(x)$,
let $(\Omega,\mathcal{F},\P, (\F_t)_{t\in [0,T]})$ be a filtered probability space which satisfies the usual conditions,
for every $s\in[1,\infty)$, 
$k,\ell\in\N$
and every random variable $\mathfrak{X}\colon \Omega\to\R^{k\times \ell}$ let $\lVert\mathfrak{X}\rVert_s\in[0,\infty]$ satisfy that $\lVert\mathfrak{X}\rVert_s^s=\E [\lVert\mathfrak{X}\rVert^s]$,
let
$\unif^\theta\colon \Omega\to[0,1]$,
$\theta\in \Theta$, be i.i.d.\ random variables which satisfy for all
 $t\in [0,1]$
that
$\P(\unif^0 \leq t)= t$,
let $W^{d,\theta} \colon [0,T]\to \R^d$, $d\in\N$, $\theta\in\Theta$, be independent standard $(\F_t)_{t\in [0,T]}$-Brownian motions with continuous sample paths, assume that
$(\unif^{\theta})_{\theta\in\Theta} $ and
$(W^{d,\theta})_{d\in\N,\theta\in\Theta}$ are independent, for every $d,n\in\N$,
$\theta\in\Theta$,
$s\in[0,T]$,
$x\in\R^d$
let
$(\approximationX^{d,n,\theta,x}_{s,t})_{t\in[s,T]}
\colon [s,T] \times\R^d\times \Omega\to\R^d$ satisfy for all 
$k\in [0,n-1]\cap\Z$,
$t\in\bigl[\max\{s,\frac{kT}{n}\},\max\{s,\frac{(k+1)T}{n}\}\bigr]$
 that $\approximationX^{d,n,\theta,x}_{s,s}=x$ and
\begin{align}\small\begin{split}
\approximationX^{d,n,\theta,x}_{s,t}&=
\approximationX^{d,n,\theta,x}_{s,\max\{s,\frac{kT}{n}\}}+ \mu_d\bigl(
\approximationX^{d,n,\theta,x}_{s,\max\{s,\frac{kT}{n}\}} \bigr)\bigl(t-\max\{s,\tfrac{kT}{n}\}\bigr) +
\sigma_d\bigl(
\approximationX^{d,n,\theta,x}_{s,\frac{kT}{n}} \bigr)\bigl(W^{d,\theta}_t-W^{d,\theta}_{\max\{s,\frac{kT}{n}\}}\bigr),\end{split}\label{p15}
\end{align}
let 
$ 
  {U}_{ n,m}^{d,\theta } \colon [0, T] \times \R^d \times \Omega \to \R
$,
$d\in\N$,
$n,m\in\Z$, $\theta\in\Theta$, satisfy for all $d,n,m\in \N$, $\theta\in\Theta$, $t\in[0,T]$, $x\in\R^d$ that
$
{U}_{-1,m}^{d,\theta}(t,x)={U}_{0,m}^{d,\theta}(t,x)=0$ and
\begin{align}
  {U}_{n,m}^{d,\theta}(t,x)
=  &\sum_{\ell=0}^{n-1}\frac{1}{m^{n-\ell}}
    \sum_{i=1}^{m^{n-\ell}}
\Biggl[
      g_d\bigl(\approximationX^{d,m^\ell,(\theta,\ell,i),x}_{t,T}\bigr)-\1_{\N}(\ell)
g_d \bigl(\approximationX^{d,m^{\ell-1},(\theta,\ell,i),x}_{t,T}\bigr)\nonumber \\
 &+(T-t)
     \bigl( f_d\circ {U}_{\ell,m}^{d,(\theta,\ell,i)}\bigr)
\left(t+(T-t)\unif^{(\theta,\ell,i)},\approximationX_{t,t+(T-t)\unif^{(\theta,\ell,i)}}^{d,m^\ell,(\theta,\ell,i),x}\right)\nonumber \\
&-\1_{\N}(\ell)(T-t)
\bigl(f_d\circ {U}_{\ell-1,m}^{d,(\theta,\ell,-i)}\bigr)
    \left(t+(T-t)\unif^{(\theta,\ell,i)},\approximationX_{t,t+(T-t)\unif^{(\theta,\ell,i)}}^{d,m^{\ell-1},(\theta,\ell,i),x}   \right) \Biggr],\label{p15b}
\end{align}
 let $\lfloor \cdot \rfloor_m \colon \R \to \R$, $ m \in \N $, and 
$\lceil \cdot \rceil_m \colon  \R \to \R$, $ m \in \N $, 
satisfy for all $m \in \N$, $t \in [0,T]$ that
$\lfloor t \rfloor_m = \max( ([0,t]\backslash \{T\}) \cap \{ 0, \frac{ T }{ m }, \frac{ 2T }{ m }, \ldots \} )$
and 
$\lceil t \rceil_m = \min(((t,\infty) \cup \{T\})\cap  \{ 0, \frac{ T }{ m }, \frac{ 2T }{ m }, \ldots \} )$,
let $\Yappr^{d,n,m}=(\Yappr^{d,n,m}_t)_{t\in[0,T]}
\colon [0,T]\times\Omega\to\R $,
 $d,n,m\in\N$, 
satisfy  for all $d,n,m\in\N$, $t\in[0,T]$  that
\begin{align}\label{c10a}
&\Yappr^{d,n,m}_{t}
= \sum_{\ell=0}^{n-1}\Biggl[
\left[ \tfrac{ \lceil t \rceil_{m^{\ell+1}} - t }{ ( T / m^{ \ell + 1 } ) } \right]
U^{d,\ell}_{n-\ell,m}(\lfloor t \rfloor_{m^{\ell+1}}, \approximationX^{d,m^n,0,0}_{0,\lfloor t \rfloor_{m^{\ell+1}}})+
\left[\tfrac{ t-\lfloor t \rfloor_{m^{\ell+1}} }{ ( T / m^{ \ell + 1 } ) }\right]
U^{d,\ell}_{n-\ell,m}(\lceil t \rceil_{m^{\ell+1}}, \approximationX^{d,m^n,0,0}_{0,\lceil t \rceil_{m^{\ell+1}}})
\nonumber
\\
&-\1_{\N}(\ell)\biggl(
\left[ \tfrac{ \lceil t \rceil_{m^{\ell}} - t }{ ( T / m^{ \ell  } ) } \right]U^{d,\ell}_{n-\ell,m}(\lfloor t \rfloor_{m^{\ell}}, \approximationX^{d,m^n,0,0}_{0,\lfloor t \rfloor_{m^{\ell}}})+
\left[ \tfrac{ t-\lfloor  t \rfloor_{m^{\ell}} }{ ( T / m^{ \ell  } ) } \right]
U^{d,\ell}_{n-\ell,m}(\lceil t \rceil_{m^{\ell}}, \approximationX^{d,m^n,0,0}_{0,\lceil t \rceil_{m^{\ell}}})
\biggr)\Biggr],
\end{align}
%
let 
 $
 ( {\FEU}_{d, n,m})_{d,n,m\in\Z}\subseteq  \N_0
$, 
$(\FEY_{d,n,m})_{d,n,m\in\Z}\subseteq \N_0$
satisfy for all $ d,n,m\in\N$ that
$
{\FEU}_{d,0,m}=0$, 
\begin{align}
&   {\FEU}_{d,n,m}\leq  \sum_{\ell=0}^{n-1}\left[
m^{n-\ell}\left[
4a_1(d)+a_2(d) m^\ell+
     {\FEU}_{d,\ell,m}
+\1_{\N}(\ell){\FEU}_{d,\ell-1,m}\right]\right]
,\label{c10}
\end{align}
\begin{align}
& \FEY_{d,n,m}\leq a_3(d)(m^n+1)+\sum_{\ell=0}^{n-1} \left[(m^{\ell+1}+1)\FEU_{d,n-\ell,m}\right],\label{c09}
\end{align} 
and
let
$M\colon \N\to\N$   satisfy that
\begin{align}
\limsup_{n\to\infty}\tfrac{1}{M(n)}=0
\quad\text{and}\quad
 \sup_{n\in\N}\left[\tfrac{M(n+1)}{M(n)}+\tfrac{(M(n))^{p/2}}{n}\right]<\infty.
\label{c08}
\end{align}
Then 
\begin{enumerate}[(i)]
\item\label{s01} for all $d\in\N$
there exists  an up to indistinguishability unique continuous random field
$(X^{d,x}_{s,t})_{s\in[0,T],t\in[s,T],x\in\R^d}
\colon \{(\mathfrak{s},\mathfrak{t})\in [0,T]^2\colon \mathfrak{s}\leq \mathfrak{t}\} \times\R^d\times \Omega\to\R^d
$  
such that 
for all
$s\in[0,T]$, 
$x\in\R^d$ it holds that
$(X^{d,x}_{s,t})_{t\in [s,T]}$ is $ (\F_t)_{t\in [s,T]}$-adapted
and such that
for all 
$s\in[0,T]$,
$t\in[s,T]$, $x\in\R^d$ it holds a.s.\
 that
\begin{align}
X^{d,x}_{s,t}= x+\int_{s}^{t} \mu_d(X^{d,x}_{s,r})\,dr+\int_{s}^{t} \sigma_d(X^{d,x}_{s,r})\,dW^{d,0}_r,
\end{align}
\item\label{s01b} for all $d\in \N$, $t\in[0,T]$, $x\in \R^d$ it holds that
$\E\bigl[\lvert
g_d(X_{t,T}^{d,x})\rvert\bigr]+
\int_{t}^{T} \E\bigl[\lvert f( u_d(r,X_{t,r}^{d,x}))\rvert
\bigr]\,d r<\infty$ and
$
u_d(t,x)=
\E\bigl[
g_d(X_{t,T}^{d,x})\bigr]+
\int_{t}^{T} \E\bigl[f_d( u_d(r,X_{t,r}^{d,x}))
\bigr]\,dr
$,
\item\label{s08} for all $d\in \N$
there exists a unique
 $(\F_{t})_{t\in[0,T]}$-predictable stochastic process $\mathbf{Y}^d=(Y^d,Z^{d,1},Z^{2,d},\ldots,Z^{d,d})\colon [0,T]\times\Omega\to \R^{d+1}
$ 
such that
$
\int_0^T  \E \bigl[\lvert Y_s^d\rvert+\sum_{j=1}^{d}\lvert Z_s^{d,j}\rvert^2\bigr]ds<\infty
$ and such that
for all  $t\in[0,T]$ it holds a.s.\ that
\begin{equation}
Y^d_t=g_d(X^{d,0}_{0,T})+\int_t^T f_d(Y^d_s)\,ds-\sum_{j=1}^{d}\int_t^T Z_s^{d,j}\, dW_s^{d,j},
\end{equation}
\item \label{s04}for all $d\in\N$, $t\in[0,T]$ it holds a.s.\ that
$u_d(t,X_{0,t}^{d,0})=Y^d_t $, and
\item \label{s03}there exist
$\gamma\in(0,\infty)$,
 $\mathsf{n}\colon \N\times(0,1)\to \N$ such that for all 
$d\in\N$,
$\epsilon\in (0,1)$, $n\in [\mathsf{n}(d,\epsilon),\infty)\cap\N$ it holds that
$
\left(\E\!\left[
\sup_{t\in[0,T]}
\left\lvert \Yappr^{d,n,M(n)}_{t}-  Y^d_t \right\rvert^{\exponentLP}\right]\right)^{\nicefrac{1}{p}}<\epsilon$
 and 
$
\epsilon^{2+\delta}
{\FEY}_{d,n,M(n)}
\leq \gamma d^\gamma
$.
\end{enumerate}
\end{corollary}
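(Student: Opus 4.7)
The proof proceeds in three stages. First, for parts \eqref{s01}--\eqref{s04}: \eqref{s01} follows from classical SDE theory since \eqref{p17} and \eqref{p19} give global Lipschitz continuity of $\mu_d$ and $\sigma_d$; \eqref{s01b} is obtained by applying It\^o's formula to $u_d(t,X^{d,x}_{t,\cdot})$ using the PDE \eqref{p10} together with the $C^{1,2}$ regularity and polynomial growth of $u_d$ in \eqref{p17}; and \eqref{s08}--\eqref{s04} follow from the nonlinear Feynman--Kac theorem of Pardoux--Peng, identifying $Y^d_t = u_d(t,X^{d,0}_{0,t})$ (and $Z^{d,j}_t = [\sigma_d(X^{d,0}_{0,t})^{\ast}(\nabla u_d)(t,X^{d,0}_{0,t})]_j$).

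For the error bound in \eqref{s03}, the plan is to verify the hypotheses of \cref{b01} (and thereby apply \cref{m01}) for each $d\in\N$ with a Lyapunov-type function of the form $V_d(t,x) = K_d \, e^{\lambda(T-t)}(b(d)^2 + c^2 \|x\|^2)^{\beta/q}$, where the constants $K_d,\lambda,q$ are chosen exactly as in the proof of \cref{c01} (using \cite[Theorem~3.2]{HN21}) so that the Euler--Maruyama estimates \eqref{h21}--\eqref{s10} and the regularity estimates \eqref{l01}--\eqref{l04} follow from the derivative bounds \eqref{p17}--\eqref{p20}. \Cref{m01} then yields, for every fixed $t\in[0,T]$,
\begin{equation*}
\|\Yappr^{d,n,m}_t - u_d(t,X^{d,0}_{0,t})\|_p \leq 959\, m\, e^{6cT} e^{m^{p/2}/p} m^{-n/2} 74^n e^{4ncT} (p-1)^{n/2} (V_d(0,0))^{q_2+1},
\end{equation*}
together with a Hölder-$\tfrac{1}{2}$ modulus for $t\mapsto u_d(t,X^{d,0}_{0,t})$ in $L^p$.

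To upgrade the pointwise bound to the sup-norm bound required by \eqref{s03}, I would apply a Kolmogorov--Chentsov argument (as in the Kolmogorov-type lemma of \cite{CHJvNW21}) to the difference $\Yappr^{d,n,m}_\cdot - u_d(\cdot,X^{d,0}_{0,\cdot})$. The Hölder modulus of $u_d(\cdot,X^{d,0}_{0,\cdot})$ is controlled by \cref{m01}\eqref{q02}, while the Hölder modulus of $\Yappr^{d,n,m}_\cdot$ is controlled, via its definition \eqref{c10a} as a piecewise affine interpolation, by the temporal-spatial Hölder estimate \cref{p05}\eqref{p04} applied to each $U^{d,\ell}_{n-\ell,m}$. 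Since $p$ can be chosen with $p > 2/\delta$, Kolmogorov's continuity criterion yields $(\E[\sup_{t\in[0,T]}|\Yappr^{d,n,M(n)}_t - u_d(t,X^{d,0}_{0,t})|^p])^{1/p}$ bounded by the pointwise error above, up to a factor polynomial in $n$, $m^n$, and in $V_d(0,0)$. For the complexity bound, \cref{c01}\eqref{c03} gives $\FEU_{d,n,m}\leq \max\{4 a_1(d), a_2(d)n\}(5m)^n$, which combined with \eqref{c09} yields $\FEY_{d,n,m}\leq (a_1+a_2+a_3)(d)\cdot n^2 (5m)^n$ up to constants. Choosing $m=M(n)\sim \sqrt{\log n}$ and $n=\mathsf{n}(d,\epsilon)$ so that the error just falls below $\epsilon$, the assumption $(M(n))^{p/2}\lesssim n$ forces the factor $e^{m^{p/2}/p}$ to be sub-polynomial in $n$, and the balance $(p-1)^{n/2}\cdot 74^n\cdot (\log n)^{-n/4}$ against effort $(5\sqrt{\log n})^n$ reproduces the $\epsilon^{-(2+\delta)}$ scaling (this is the complexity calculation carried out in \cref{c01}\eqref{c07}); the polynomial growth $d^\gamma$ comes from the hypothesis on $b(d)+\sum_i a_i(d)$ and from $(V_d(0,0))^{q_2+1}$ being polynomial in $d$.

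The main obstacle is the Kolmogorov--Chentsov step. The pointwise bound degrades by a factor $(p-1)^{n/2}$ when moving from $p=2$ to the larger $p>2/\delta$ needed for Kolmogorov, so one must check that this additional exponential-in-$n$ factor is still absorbed by the super-geometric decay $m^{-n/2}$ when $m=M(n)$, and that the Hölder constant of $\Yappr^{d,n,m}_\cdot$ extracted from \cref{p05} is controlled uniformly in $n,m$ by a fixed power of $V_d(0,0)$. Only then does the passage from pointwise to sup-norm cost merely a polynomial overhead in $n$ and $d$, preserving the near-optimal rate claimed in \eqref{s03}.
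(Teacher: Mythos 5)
Your overall route coincides with the paper's: items (i)--(iv) by Lipschitz SDE theory, It\^o/Feynman--Kac and BSDE uniqueness, and item (v) by verifying \cref{b01} with an explicit Lyapunov function built from \cite{HN21}, invoking \cref{m01} for the pointwise-in-time error and the H\"older-$\tfrac12$ modulus of $Y^d$, and then upgrading to $(\E[\sup_{t}|\cdot|^{p}])^{1/p}$ by a Kolmogorov--Chentsov argument in the spirit of \cite{CHJvNW21}. The genuine gap is that you stop exactly where the proof of (v) has to be done, and where you do quantify the upgrade you quantify it incorrectly: the passage from the pointwise bound to the supremum cannot cost ``a factor polynomial in $n$, $m^n$, and $V_d(0,0)$'' --- any fixed positive power of $m^n$ would wipe out the $m^{-n/2}$ decay. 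Since $\Yappr^{d,n,m}$ is piecewise affine on the grid of mesh $T/m^n$, the unavoidable loss is a $1/p$-power of the number of grid points, i.e.\ a factor of order $(m^n)^{1/p}\le m^{\min\{\delta,1\}n/2}$ (this is precisely where the hypothesis $p>2/\delta$ enters), and checking that this loss, together with the factors $(p-1)^{n/2}$, $74^{n}$, $e^{(M(n))^{p/2}/p}$ and the multigrid cost $\FEY$, is still compatible with the claimed $\epsilon^{2+\delta}$-work bound \emph{is} the content of item (v); it is not ``the complexity calculation carried out in \cref{c01}'', whose item on the work-error tradeoff concerns the pointwise approximations $U^{0}_{n,m}$ and contains neither the sup-in-time loss nor $\FEY$. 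The paper performs this step by introducing the weighted functional $\Delta_{d,n,m}$ of \eqref{t13c} (pointwise error plus $m^{-n/2}$ times the H\"older data of $Y^d$, multiplied by the loss factor $m^{\min\{\delta,1\}n/2}$), extracting from \cite[Theorem~1.1]{CHJvNW21} a single constant $K$ with $\mathcal{E}_{d,n,m}\le K\Delta_{d,n,m}$ uniformly in $d,n,m$, defining $\mathsf{n}(d,\epsilon)$ via $\sup_{k\ge n}\Delta_{d,k,M(k)}<\epsilon/K$ so that minimality gives both the error bound for all $n\ge\mathsf{n}(d,\epsilon)$ and the lower bound $\epsilon\le K\Delta_{d,\mathsf{n}-1,M(\mathsf{n}-1)}$ needed to estimate $\epsilon^{2+\delta}\FEY_{d,\mathsf{n},M(\mathsf{n})}$ by $\FEY_{d,n+1,M(n+1)}\lvert K\Delta_{d,n,M(n)}\rvert^{2+\delta}$ at $n=\mathsf{n}-1$, where the surviving negative power of $M(n)$ must absorb all geometric-in-$n$ factors. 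Your proposal explicitly defers this absorption (``one must check \dots''), so the decisive step is missing rather than proved.

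A secondary deviation: you plan to control the temporal H\"older modulus of $\Yappr^{d,n,m}$ itself via \cref{p05}. The paper never needs this; because $\Yappr^{d,n,m}$ is piecewise affine on the $m^n$-grid, only the grid-point errors (already uniform in $t$ by \cref{m01}, which is where the H\"older estimates of \cref{p05} are actually consumed, through the multigrid telescoping of \cite[Lemma~2.3]{hutzenthaler2021overcoming}) and the $L^p$-H\"older modulus of the limit $Y^d$ enter the Kolmogorov-type estimate. Your variant can be reconciled with the paper's additive form by Young's inequality, so it is workable in principle, but it neither simplifies nor resolves the quantitative bookkeeping above, and your ``choose $n$ so the error just falls below $\epsilon$'' needs the infimum construction \eqref{s09} (or an equivalent) to make the work estimate at the threshold rigorous.
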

\begin{proof}[Proof \cref{s02}]First,
a standard result on stochastic differential equations with Lipschitz continuous coefficients
(see, e.g.,
\cite[Theorem~4.5.1]{Kunita1990}) 
and \eqref{p19} show \eqref{s01}.

Throughout the rest of this proof 
let $q,\bar{c}\in \R$ satisfy that $q=12\beta p$ and $\bar{c}=  16q^2c^2$ and for every $d\in\N$ let $\varphi_d\colon \R^d\to[1,\infty) $,
$V_d\colon [0,T]\times\R^d\to[1,\infty)  $
 satisfy 
for all 
$t\in[0,T]$,
$x\in\R^d$ that
\begin{align}
\varphi_d(x)= 2^{2q}\Bigl(\lvert b(d)\rvert^2+ c^2\lVert x \rVert^2\Bigr)^{q}
,\label{p13}
\end{align}
and
\begin{align}\label{p16}
V_d(t,x)=\left[
62 (1+b(d)+c)(c+1)
\left[\sqrt{T}+2q\right]^7
e^{5 c^2\left[\sqrt{T}+2q\right]^2 T}
 e^{\frac{4.5\bar{c} T}{2q}}\right]
e^{\frac{1.5\bar{c}\beta(T-t)}{ q}}
(\varphi_d(x))^{\frac{\beta}{ q}}.
\end{align}
First,
\eqref{p17}, \eqref{p13}, and \eqref{p16} show for all 
$d\in\N$,  $t\in[0,T]$, $x\in\R^d$ that
  \begin{equation}\begin{split}
&\max\bigl\{\lvert T f_d (0)\rvert,\lvert u_d(t,x)\rvert+
\lvert g_d(x)\rvert\bigr\}\leq\xeqref{p17}
\bigl[b(d)+c^2\lVert x\rVert^2\bigr]^{\beta}\leq \xeqref{p13}
(\varphi_d(x))^{\frac{\beta}{q}}\leq\xeqref{p16}  V_d(t,x).
\end{split}\label{s05}\end{equation}
Next, \eqref{p18},
\cite[Lemma 3.3]{HN21},
and the fact that $\forall\,d\in\N\colon b(d)\leq\min \{\frac{V_d}{\sqrt{T}},\frac{V_d}{T}\}$
 show that for all $d\in\N$, $x,\tilde{x},y,\tilde{y}\in\R^d$,
$v,w,\tilde{v},\tilde{w}\in\R$
  it holds that 
\begin{equation}%
\begin{split}
& 
\left\lvert 
(g_d(x)-g_d(y))-( g_d(\tilde{x})-g_d(\tilde{y}))
\right\rvert 
\leq b(d)
\left(
\lVert(x-y)-(\tilde{x}-\tilde{y})\rVert + \tfrac{(\lVert x-y\rVert+\lVert\tilde{x}-\tilde{y}\rVert)\lVert x-\tilde{x}\rVert}{2}\right)
\\
&
\leq 
\tfrac{V_d(T,x)+V_d(T,y)+V_d(T,\tilde{x})+V_d(T,\tilde{y})}{4}
\left(
\tfrac{\lVert(x-y)-(\tilde{x}-\tilde{y})\rVert}{\sqrt{T}} + \tfrac{(\lVert x-y\rVert+\lVert\tilde{x}-\tilde{y}\rVert)\lVert x-\tilde{x}\rVert}{2T}\right)
\end{split}
\end{equation}%
and
\begin{equation}
\begin{split}
&
\left\lvert 
(f_d(v)-f_d(w))-( f_d(\tilde{v})-f_d(\tilde{w}))
\right\rvert 
\leq c\left\lvert (v-w)-(\tilde{v}-\tilde{w})\right\rvert  + \tfrac{c\left(\lvert v-w\rvert + \lvert \tilde{v}-\tilde{w}\rvert\right)\lvert v-\tilde{v}\rvert}{2}.
\end{split}\label{s05b}
\end{equation}%
This implies for all $d\in\N$, $x,y\in\R^d$, $v,w\in\R$ that
\begin{equation}\label{s78}\begin{split}
&
\lvert g_d(x)-g_d(y)\rvert=
\tfrac{\left\lvert (g_d(x)-g_d(y)) -(g_d(x)-g_d(x))
\right\rvert}{2}
+
\tfrac{\left\lvert (g_d(y)-g_d(x)) -(g_d(y)-g_d(y))
\right\rvert}{2}\\
&\leq\tfrac{1}{2} \tfrac{3V_d(T,x)+V_d(T,y)}{4}
\tfrac{\lVert x-y\rVert}{\sqrt{T}}
+ \tfrac{1}{2}
\tfrac{3V_d(T,y)+V_d(T,x)}{4}
\tfrac{\lVert y-x\rVert}{\sqrt{T}}= \tfrac{V_d(T,x)+V_d(T,y)}{2}\tfrac{\lVert x-y\rVert}{\sqrt{T}}
\end{split}\end{equation}and
\begin{equation}\label{s79}
\lvert f_d(v)-f_d(w)\rvert =
\lvert (f_d(v)-f_d(w))-(f_d(v)-f_d(v))\rvert\leq c \lvert v-w\rvert.
\end{equation}
Next, \eqref{p17}, the fact that $\forall\,A,B\in [0,\infty)\colon A+B\leq 2\sqrt{A^2+B^2}$, and \eqref{p13} show 
 for all
$d\in\N$,
$x\in\R^d$  that 
\begin{align}
\lVert\mu_d(0)\rVert+\lVert\sigma_d(0)\rVert+c\lVert x\rVert\leq 
\lvert b(d)\rvert+c\lVert x\rVert
\leq 
2\left[\lvert b(d)\rvert^2+ c^2\lVert x \rVert^2\right]^{\frac{1}{2}}=  (\varphi_d(x))^{\frac{1}{2q}}.\label{p14}
\end{align}
Moreover, 
the fact that
$\forall\, d\in\N,x\in\R^d\colon 
\varphi_d(x)= \bigl(4\lvert b(d)\rvert^2+ 4c^2\lVert x \rVert^2\bigr)^{q}
$ (see \eqref{p13}), the fact that 
$q\geq 3$, 
 and \cite[Lemma~3.1]{HN21}
 (applied for every $d\in\N$ with
$p\gets q$, $a\gets 4 \lvert b(d)\rvert^2$, $c\gets 2c$, $V\gets\varphi_d$
in the notation of 
\cite[Lemma~3.1]{HN21})
show for all 
$d\in\N$,
$x,y\in\R^d$ that
$
\bigl\lvert((\totalD \varphi_d)(x))(y)\bigr\rvert
\leq {4q}{c} (\varphi_d(x))^{\frac{2q-1}{2q}}\lVert y\rVert$ and $
\bigl\lvert ((\totalD^2 \varphi_d)(x))(y,y)\bigr\rvert
\leq 16q^2c^2(\varphi_d (x))^{\frac{2q-2}{2q}}\lVert y\rVert^2.
$
This and the fact that $\bar{c}= \max \{4qc,16q^2c^2\}$
show for all
$d\in\N$,
$x,y,\tilde{x},\tilde{y}\in\R^d$ that 
\begin{equation}
\bigl\lvert((\totalD \varphi_d)(x))(y)\bigr\rvert
\leq \bar{c}(\varphi_d(x))^{\frac{2q-1}{2q}}\lVert y\rVert
\quad\text{and}\quad 
\bigl\lvert ((\totalD^2 \varphi_d)(x))(y,y)\bigr\rvert
\leq \bar{c}(\varphi_d(x))^{\frac{2q-2}{2q}}\lVert  y\rVert^2.\label{m42}
\end{equation}
Next,
\eqref{p18}, \eqref{p19}, and \cite[Lemma 3.3]{HN21} show for all
$d\in\N$,
$\zeta\in \{\mu_d,\sigma_d\}$,
$x,y,\tilde{x},\tilde{y}\in\R^d$ that 
\begin{equation}
\begin{split}
\left\lVert 
(\zeta(x)-\zeta (y))-
(\zeta(\tilde{x})-\zeta (\tilde{y}))\right\rVert 
\leq c
\left\lVert 
(x-y)-(\tilde{x}-\tilde{y})\right\rVert 
+b(d)
\tfrac{\left\lVert x-y\right\rVert 
+\left\lVert \tilde{x}-\tilde{y}\right\rVert 
}{2}\lVert x-\tilde{x}\rVert.
\label{v03}
\end{split}
\end{equation}
This, 
\eqref{p14}, \eqref{s01}, \eqref{p15}, and
\cite[Theorem~3.2]{HN21}
(applied for every $d,n\in\N$ with
$m\gets d$, 
$b\gets b(d)$,
$p\gets 2q$, $\mu\gets\mu_d$, $\sigma\gets \sigma_d$, $V\gets \varphi_d$
 in the notation of \cite[Theorem~3.2]{HN21}), 
the fact that $q\geq 4$,
Jensen's inequality,  the fact that
$1\leq  q/\beta \leq q $, and the fact that
$\forall\,t,s\in [0,T]\colon \lvert t-s\rvert^{\nicefrac{1}{2}}\leq \sqrt{T}\leq \sqrt{T}+p$
show that
\begin{enumerate}[(I)]\itemsep0pt
\item it holds 
for all $d,n\in\N$,
 $s\in[0,T]$, $t\in [s,T]$,
 $x\in\R^d$
  that 
\begin{align}\label{r06b}
\E\bigl[ \varphi_d(\approximationX_{s,t}^{d,n,0,x})\bigr]\leq e^{1.5\bar{c}\lvert t-s\rvert}\varphi_d(x)
,
\end{align}
\item  it holds for all
 $d,n\in\N$,
$s\in[0,T]$, 
$t\in [s,T]$,
$x\in\R^d$ that
\begin{align}\label{m29}
\left\lVert 
\approximationX_{s,t}^{d,n,0,x}
-X_{s,t}^{x}\right\rVert_{\frac{q}{\beta}}
\leq 
 \sqrt{2}
c\left[
\sqrt{T}+2q
\right]^4
e^{ c^2\left[
\sqrt{T}+2q
\right]^2 T}
(e^{1.5\bar{c} T}\varphi_d(x))^{\frac{1}{2q} }
 \tfrac{1}{\sqrt{n}}
,
\end{align}

\item 
 it holds for all $d,n\in\N$,
$ s,\tilde{s}\in [0,T]$,
$t\in[s,T]$,
$\tilde{t}\in[\tilde{s},T]$,
 $x,\tilde{x}\in\R^d$ that
\begin{align}\label{m28}\begin{split}
&
\left \lVert \approximationX^{d,n,0,x}_{s,t}
-
\approximationX^{d,n,0,\tilde{x}}_{\tilde{s},\tilde{t}}\right\rVert_{\frac{q}{\beta}}\leq \sqrt{2}\lVert x-\tilde{x} \rVert 
e^{c^2\left[
\sqrt{T}+2q
\right]^2 T}\\
&+
{5} e^{c^2\left[
\sqrt{T}+2q
\right]^2T}
\left[
\sqrt{T}+2q
\right]
e^{\frac{1.5 \bar{c} T}{2q}}
\frac{(\varphi_d(x))^{\frac{1}{2q}}
+(\varphi_d(\tilde{x}))^{\frac{1}{2q} }}{2}
\left[
\lvert s-\tilde{s}\rvert^{\nicefrac{1}{2}} +
\lvert t-\tilde{t}\rvert^{\nicefrac{1}{2}}\right]
,
\end{split}\end{align}

\item it holds
for all 
$d,n\in \N$, $s\in[0,T]$,
$t, \tilde{t}\in[s,T]$, 
$x,\tilde{x}\in\R^d$ that
\begin{align}\label{m36}
&\left \lVert 
\bigl(x-X_{s,t}^{d,x}\bigr)
-\bigl(\tilde{x}-X^{d,\tilde{x}}_{s,t}
\bigr)\right\rVert_{\frac{q}{\beta}}
\leq c\left[
\sqrt{T}+2q
\right]^2\sqrt{2}
e^{c^2\left[
\sqrt{T}+2q
\right]^2 T} 
\tfrac{\lVert x-\tilde{x}\rVert\lvert t-s\rvert^{\nicefrac{1}{2}}}{\sqrt{T}}
,
\end{align}

\item 
\label{m33}
it holds for all 
$d,n\in\N$, $s\in[0,T]$,
$t\in[s,T]$, 
$x,\tilde{x},y,\tilde{y}\in\R^d$ that
\begin{align}
&\left\lVert 
\bigl(
X^{d,x}_{s,t}
-X^{d,y}_{s,t}
\bigr)
-\bigl(
X^{d,\tilde{x}}_{s,t}
-X^{d,\tilde{y}}_{s,t}
\bigr)
\right\rVert_{\frac{q}{\beta}}
  \leq \sqrt{2}e^{c^2\left[\sqrt{T}+2q\right]^2T}
\left\lVert 
\bigl(
x-y \bigr)
-
\bigl(
\tilde{x}-\tilde{y} \bigr)\right\rVert\nonumber\\
& + 2\sqrt{2}b(d)\left[\sqrt{T}+2q\right]^3 e^{3 c^2\left[
\sqrt{T}+2q
\right]^2 T}
\tfrac{\left(\lVert x-y \rVert+
\lVert \tilde{x}-\tilde{y} \rVert\right) \lVert x-\tilde{x}\rVert}{2\sqrt{T}}
,\label{m33b}
\end{align}
and
\item 
\label{m32}it holds
for all $d,n\in\N$,
 $s,\tilde{s}\in[0,T]$, 
$t\in [s,T]$, $ \tilde{t}\in[\tilde{s},T]$,
$x,\tilde{x}\in\R^d$ that
\begin{align}
&
\left\lVert \bigl(
X_{s,t}^{d,x}
-
X_{\tilde{s},\tilde{t}}^{d,\tilde{x}}
\bigr)
 -
\bigl(\approximationX_{s,t }^{d,n,0,x} 
-\approximationX_{\tilde{s},\tilde{t}}^{d,n,0,\tilde{x}} \bigr)\right\rVert_{\frac{q}{\beta}}\leq 
62 (b(d)+c)(c+1)
\left[\sqrt{T}+2q\right]^7
e^{5 c^2\left[\sqrt{T}+2q\right]^2 T}\nonumber\\
&\qquad\qquad
 e^{\frac{4.5\bar{c} T}{2q}}\tfrac{(\varphi_d(x))^{\nicefrac{1}{q}}
+(\varphi_d(\tilde{x}))^{\nicefrac{1}{q} }}{2}
\left[
\tfrac{\lvert s-\tilde{s}\rvert^{\nicefrac{1}{2}}+
\lvert t-\tilde{t}\rvert^{\nicefrac{1}{2}}}{2}
+\lVert x-\tilde{x}\rVert\right]
\tfrac{1}{\sqrt{n}}.\label{m32b}
\end{align}
\end{enumerate}
This, the Markov property,  \eqref{s01},  and \eqref{p15} show
for all $d\in\N$,
 $s,\tilde{s}\in[0,T]$, 
$t\in[s,T]$,
$\tilde{t}\in[\tilde{s},T] $, $r\in [t,T]$,
$x,\tilde{x},y,\tilde{y}\in\R^d$, 
$A\in ( \mathcal{B}(\R^d))^{\otimes \R^d}$,
$B\in (\mathcal{B}(\R^d))^{\otimes ([t,T]\times\R^d)}
$ 
that
\begin{equation}\begin{split}
\P \!\left(
X_{ t,r}^{d,X_{ s,t }^{d,x} } = X_{s,r}^{d,x}\right)=1
,\quad 
\P \!\left(X_{ s,t }^{d,(\cdot)} \in A,
X_{ t,\cdot}^{d,(\cdot)} \in B\right)= 
\P \!\left(X_{ s,t }^{d,(\cdot)} \in A\right)\P \!\left(
X_{ t,(\cdot)}^{d,(\cdot)} \in B\right).
\end{split}\label{p21}
\end{equation}%
\begin{equation}
\begin{split}
&
\left\lVert \left\lVert 
(X_{s,t}^{d,x}-X_{s,t}^{d,y})
-(X_{s,t}^{d,\tilde{x}}-X_{s,t}^{d,\tilde{y}})\right\rVert
\right\rVert_{\frac{q}{\beta}}\\
&\leq \xeqref{m33b}
\tfrac{V_d(s,x)+V_d(s,y)+V_d(s,\tilde{x})+V_d(s,\tilde{y})}{4}
\left[
\left\lVert 
(x-y)-(\tilde{x}-\tilde{y})
\right\rVert
+ \tfrac{\left(
\lVert x-y\rVert+\lVert\tilde{x}-\tilde{y}\rVert\right)
\lVert x-\tilde{x}\rVert}{2\sqrt{T}}\right]
,
\end{split}
\end{equation}%
\begin{equation}\begin{split}
\left\lVert \left\lVert \bigl( X_{s,t}^{d,x}- 
X_{s,t}^{d,y}\bigr) -
( x-y)\right\rVert\right\rVert_{\frac{q}{\beta}}\leq\xeqref{m36} \tfrac{(V_d(s,x)+V_d(s,y))}{2} \tfrac{\lVert x-y\rVert\lvert t-s\rvert^{\nicefrac{1}{2}}}{\sqrt{T}},
\end{split}\end{equation}%
\begin{equation}\begin{split}
&
\left\lVert 
\left\lVert 
\approximationX_{s,t}^{d,n,0,x}-
\approximationX_{\tilde{s},\tilde{t}}^{d,n,0,\tilde{x}}\right\rVert\right\rVert_{\frac{q}{\beta}}\leq\xeqref{m28}
\tfrac{V_d(s,x)+V_d(\tilde{s},\tilde{x})}{2}
\tfrac{\lvert s-\tilde{s}\rvert^{\nicefrac{1}{2}}+\lvert t-\tilde{t}\rvert^{\nicefrac{1}{2}}}{2}
+
e^{\left(\frac{\ln(2)}{T} +c^2[\sqrt{T}+2q]^2\right)T  }
\lVert x-\tilde{x}\rVert,
\end{split}\label{p23}\end{equation}
\begin{equation}
\begin{split}
&\left\lVert \left\lVert 
 \bigl( \approximationX^{d,n,0,x}_{s,t} -
X^{d,x}_{s,t}\bigr)
-   \bigl(\approximationX^{d,n,0,\tilde{x}}_{\tilde{s},\tilde{t}} - X^{d,\tilde{x}}_{\tilde{s},\tilde{t}}\bigr)\right\rVert\right\rVert_{\frac{q}{\beta}}
\leq \xeqref{m32b}
\tfrac{V_d(s,x)+V_d(\tilde{s},\tilde{x})}{2}\left[\tfrac{\lvert s-\tilde{s}\rvert^{\nicefrac{1}{2}} 
+\lvert t-\tilde{t}\rvert^{\nicefrac{1}{2}}}{2}+
 \lVert x-\tilde{x}\rVert\right]\tfrac{1}{\sqrt{n}},
\end{split}\label{p23c}
\end{equation}
\begin{equation}
\P(X^{d,x}_{s,s}=x)=
\P\!\left(\approximationX^{d,n,0,x}_{s,s}=x\right)=1,
\quad\text{and}\quad
\sqrt{2}e^{ c^2[\sqrt{T}+2q]^2 T} = 
e^{\left(\frac{\ln(2)}{2T} +c^2[\sqrt{T}+2q]^2\right)T  }\leq  V_d(s,x).\label{s06}
\end{equation}
Next, 
\eqref{r06b} shows
for all $d,n\in\N$, $ x\in\R^d$, $s\in[0,T]$, $t\in[s,T]$ that
\begin{align}\begin{split}
&
\left
\lVert
e^{\frac{1.5\bar{c}\beta(T-t)}{ q}}
(\varphi_d(\approximationX_{s,t}^{d,n,0,x}))^{\frac{\beta}{q}}\right\rVert_{\frac{q}{\beta}}=e^{\frac{1.5\bar{c}\beta(T-t)}{ q}} \bigl(\E\bigl[ \varphi_d(\approximationX_{s,t}^{d,n,0,x})\bigr]\bigr)^{\frac{\beta}{q}}\\
&\leq e^{\frac{1.5\bar{c}\beta(T-t)}{ q}} e^{\frac{1.5\beta\bar{c}\lvert t-s\rvert}{q}}(\varphi_d(x))^{\frac{\beta}{q}}=e^{\frac{1.5\bar{c}\beta(T-s)}{ q}} (\varphi_d(x))^{\frac{\beta}{q}}.
\end{split}\end{align}
This, \eqref{p13}, and \eqref{p16} 
show
for all $d\in\N$, $ x\in\R^d$, $s\in[0,T]$, $t\in[s,T]$ that
\begin{align}
\bigl\lVert V_d\bigl(t,\approximationX_{s,t}^{d,n,0,x}\bigr)\bigr\rVert_{\frac{q}{\beta}}\leq V_d(s,x).\label{p22}
\end{align}
This, \eqref{m29},  continuity of $V_d$, $d\in\N$, and Fatou's lemma show for all
$d\in\N$, $ x\in\R^d$, $s\in[0,T]$, $t\in[s,T]$ that
$
X_{s,t}^{d,x}=
\operatornamewithlimits{\P\text{-}\lim}_{n\to\infty}
\approximationX_{s,t}^{d,n,0,x}
$
and
$
\bigl\lVert V_d\bigl(t,X_{s,t}^{d,x}\bigr)\bigr\rVert_{\frac{q}{\beta}}
=
\left\lVert
\operatornamewithlimits{\P\text{-}\lim}_{n\to\infty} V_d\bigl(t,\approximationX_{s,t}^{d,n,0,x}\bigr)\right\rVert_{\frac{q}{\beta}}
\leq \liminf_{n\to\infty}
\bigl\lVert V_d\bigl(t,\approximationX_{s,t}^{d,n,0,x}\bigr)\bigr\rVert_{\frac{q}{\beta}}\leq V_d(s,x).$
This, \eqref{s01}, the Feynman-Kac formula, 
the fact that $f_d$, $d\in\N$, are Lipschitz continuous (see \eqref{p18}), \eqref{s05},
and \eqref{p10} show \eqref{s01b}.

Next, \eqref{m29}, \eqref{s06}, and \eqref{p23} show for all
$d\in\N$, $ x\in\R^d$, $s\in[0,T]$, $t\in[s,T]$ that
\begin{equation}\begin{split}
&
\left\lVert 
\left\lVert 
X_{s,t}^{d,x}-x\right\rVert\right\rVert_{\frac{q}{\beta}}=
 \xeqref{m29} \xeqref{s06}
\lim_{n\to\infty}
\left\lVert 
\left\lVert 
\approximationX_{s,t}^{d,n,0,x}-
\approximationX_{{s},{s}}^{d,n,0,{x}}\right\rVert\right\rVert_{\frac{q}{\beta}}\leq\xeqref{p23}
\tfrac{V_d(s,x)+V_d({s},{x})}{2}
\tfrac{\lvert t-{s}\rvert^{\nicefrac{1}{2}}}{2}.
\end{split}\label{p25}\end{equation}
Moreover, \eqref{m29} and \eqref{s06} show for all
$d\in\N$, $ x\in\R^d$, $s\in[0,T]$, $t\in[s,T]$ that
\begin{equation}\begin{split}
&\left\lVert 
\left\lVert 
X_{s,t}^{d,x}-
X_{{s},{t}}^{d,\tilde{x}}\right\rVert\right\rVert_{\frac{q}{\beta}}=\xeqref{m29}\lim_{n\to\infty} 
\left\lVert 
\left\lVert 
\approximationX_{s,t}^{d,n,0,x}-
\approximationX_{{s},{t}}^{d,n,0,\tilde{x}}\right\rVert\right\rVert_{\frac{q}{\beta}}\\
&\leq\xeqref{m28}
\sqrt{2} 
e^{c^2\left[
\sqrt{T}+2q
\right]^2 T}
\lVert x-\tilde{x}\rVert\leq \xeqref{s06}
\tfrac{V_d(s,x)+V_d(\tilde{s},\tilde{x})}{2}
\lVert x-\tilde{x}\rVert
.
\end{split}\label{p23b}\end{equation}
In addition, \eqref{s06} and \eqref{p23c} show for all
$d,n\in\N$, $ x\in\R^d$, $s\in[0,T]$, $t\in[s,T]$ that
\begin{equation}
\begin{split}
&
\left\lVert \left\lVert 
 \approximationX^{d,n,0,x}_{s,t} -
X^{d,x}_{s,t}\right\rVert\right\rVert_{\frac{q}{\beta}}
=\xeqref{s06}
\left\lVert \left\lVert 
 \bigl( \approximationX^{d,n,0,x}_{s,t} -
X^{d,x}_{s,t}\bigr)
-   \bigl(\approximationX^{d,n,0,x}_{s,s} - X^{d,x}_{s,s}\bigr)\right\rVert\right\rVert_{\frac{q}{\beta}}\\
&
\leq \xeqref{p23c}
V_d(s,x)\tfrac{\lvert t-s\rvert^{\nicefrac{1}{2}} 
}{2}
\tfrac{1}{\sqrt{n}}\leq \tfrac{\sqrt{T}V_d(s,x)}{2\sqrt{n}}.
\end{split}\label{p26}
\end{equation}
Next, for every $d\in\N$ let 
$\mathbf{\tilde{Y}}^d=(\tilde{Y}^d,\tilde{Z}^{d,1},\tilde{Z}^{2,d},\ldots,\tilde{Z}^{d,d})\colon [0,T]\times\Omega\to \R^{d+1}
$ satisfy for all 
$i\in [1,d]\cap\Z$,
$t\in[0,T]$ that
$\tilde{Y}^d_t=u_d(t,X_{0,t}^{d,0}) $ and
$\tilde{Z}_t^{d,i}=(\frac{\partial}{\partial x_j})u_d(t,X_{0,t}^{d,0})  \sigma_{d,j,i} (X_{0,t}^{d,0})$. Then It\^o's formula, the regularity assumptions of $u_d$, $d\in\N$, and the fact that
$\forall\,d\in\N,x\in\R^d\colon g_d(x)=u_d(T,x)$ show that
for all  $t\in[0,T]$ it holds a.s.\ that
$
\tilde{Y}^d_t=g_d({X}^{d,x}_{0,T})+\int_t^T f_d(\tilde{Y}^d_s)\,ds-\sum_{j=1}^{d}\int_t^T \tilde{Z}_s^{d,j}\, dW_s^{d,j}.
$ This and a result on uniqueness of solutions to BSDEs (see, e.g., \cite[Theorem~1.1]{HKN21}) imply that $\tilde{\mathbf{Y}}=\mathbf{Y}$. This implies
\eqref{s08} and \eqref{s04}.


For the rest of this proof
 let $\Delta_{d,n,m}, \mathcal{E}_{d,n,m}\in[0,\infty]$,  $d,m,n\in\N$, satisfy for all
$d,m,n\in\N$
 that
\begin{align}\label{t13c}\begin{split}
&\Delta_{d,n,m}=m^{\min \{\delta,1\} n/2}\Biggl\{
\left(\sup_{t\in [0,T]}
\left\lVert \Yappr^{d,n,m}_{t}-  Y^d_t \right\rVert_{\exponentLP}\right)\\
&\qquad\qquad\qquad\qquad\qquad\qquad
+m^{-n/2}\left[\left(\sup_{t\in [0,T]}\left\lVert
Y^d_t\right\rVert_{p}\right)+
\left(
\sup_{t,s\in [0,T]\colon t\neq s}
\tfrac{\sqrt{T}\left\lVert
Y^d_t
-Y^d_s
\right\rVert_{p}}{\lvert t-s\rvert^{\nicefrac{1}{2}}}\right)\right]\Biggr\}
\end{split}
\end{align}
and
\begin{align}\mathcal{E}_{d,n,m}=
\left(\E\!\left[
\sup\limits_{t\in[0,T]}
\left\lvert \Yappr^{d,n,m}_{t}-  Y^d_t \right\rvert^{\exponentLP}\right]\right)^{\nicefrac{1}{p}},
\end{align}
 let $K\in [0,\infty]$ satisfy that
\begin{align}\begin{split}
&K=\inf\left(\left\{ \gamma\in[2,\infty]\colon 
\forall\,d,n,m\in\N\colon \mathcal{E}_{d,n,m}
\leq \gamma\Delta_{d,n,m}\right\}\cup\{\infty\}\right),
\end{split}\label{s07}
\end{align} and
let $\mathsf{n}\colon\N\times  (0,\infty) \to [0,\infty]$ satisfy for all $d\in\N$, $\epsilon\in(0,\infty)$ that
\begin{align}\label{s09}\textstyle
\mathsf{n}(d,\varepsilon)= \inf\! \left(\left\{n\in\N\colon\sup_{k\in\N\cap  [n,\infty)}\sup_{t\in[0,T]}
\Delta_{d,k,M(k)}<\tfrac{\epsilon}{K}\right\}\cup\{\infty\}\right).
\end{align}
Now 
\cite[Theorem~1.1]{CHJvNW21}
  (applied with $E\gets\R$, $\epsilon\gets \min\{\delta,1\}/2$, $\alpha\gets 0$, $\beta\gets 1/2$ in the notation of \cite[Theorem~1.1]{CHJvNW21})
implies that
$K<\infty$.
In addition,
\cref{m01} (applied for every $d\in\N$ with
$c\gets\frac{\ln(2)}{T} +c^2[\sqrt{T}+2q]^2$,
$f\gets f_d$, 
$g\gets g_d$, 
$V\gets V_d$, $(X^{x}_{s,t})_{s\in[0,T],t\in[s,T],x\in\R^d}\gets (X^{d,x}_{s,t})_{s\in[0,T],t\in[s,T],x\in\R^d}$,
$(\approximationX^{d,n,\theta,x}_{s,t})_{\theta\in\Theta,n\in\N,s\in[0,T],t\in[s,T],x\in\R^d}\gets (\approximationX^{n,\theta,x}_{s,t})_{\theta\in\Theta,n\in\N,s\in[0,T],t\in[s,T],x\in\R^d}$,
$(U^{\theta}_{n,m} )_{n,m\in\Z,\theta\in\Theta}\gets (U^{d,\theta}_{n,m})_{n,m\in\Z,\theta\in\Theta}$,
$\exponentV\gets \frac{q}{\beta}$,
$\delta\gets \min\{\delta,1\}$,
$\exponentX\gets \frac{q}{\beta}$,
$\expFirstNorm\gets 3$, $\expSecondNorm\gets 9$,
$(\Yappr^{n,m})_{n,m\in\N} \gets  (\Yappr^{d, n,m})_{n,m\in\N}$,
$a_1\gets a_1(d)$,
$a_2\gets a_2(d)$, $a_3\gets a_3(d)$,
  $ ( {\FEU}_{ n,m})_{n,m\in\Z}\gets ( {\FEU}_{ d,n,m})_{n,m\in\Z}$, 
$(\FEY_{n,m})_{n,m\in\Z}\gets (\FEY_{d,n,m})_{n,m\in\Z}$
in the notation of \cref{m01}),
 \eqref{s05}--\eqref{s05b}, \eqref{p21}--\eqref{s06}, \eqref{p22}, the fact that
$\frac{9+1}{(q/\beta)}+\frac{2}{(q/\beta)}=\frac{12\beta}{q}= \frac{1}{\exponentLP} $,  the assumptions of \cref{s02}, \eqref{s01b}, and
\eqref{s04} 
show that
there exists $\gamma\in [1,\infty)$ such that for all
$d,n\in\N$ it holds that
$
\Delta_{d,n,M(n)}\leq \gamma^n (M(n))^{-n/2} (V_d(0,0))^{10}.
$
This,  \eqref{s09}, and the fact that $K<\infty$ show for all $d\in\N$, 
$\epsilon\in (0,1)$
 that $\mathsf{n}(d,\epsilon)<\infty$. 
This, \eqref{s07}, \eqref{s09},
 and the fact that $K<\infty$ 
 show for all $d\in\N$, 
$\epsilon\in (0,1)$,
$n\in [\mathsf{n}(d,\epsilon),\infty)$ that
\begin{align}\begin{split}
\mathcal{E}_{d,n,M(n)}\leq\xeqref{s07} K\Delta_{d,n,M(n)}<\xeqref{s09}\epsilon.
\end{split}\label{s13}\end{align}
Next,
\cref{c01} and \eqref{c10}
show for all $d,n,m\in\N$ that
$
{\FEU}_{d,n,m}\leq \max\{4 a_1(d),a_2(d)n\}(5m)^n
$. 
This and \eqref{c09} imply that there exists $\gamma\in [1,\infty)$ such that for all $d,m,n\in\N$ it holds that
\begin{align}\begin{split}
&\FEY_{d,n,m}\leq a_3(d)(m^n+1)+\sum_{\ell=0}^{n-1} \left[(m^{\ell+1}+1)\FEU_{d,n-\ell,m}\right]\\
&\leq 2a_3(d)m^n+ 
\sum_{\ell=0}^{n-1} \left[2 m^{\ell+1} \max\{4 a_1(d),a_2(d)n\}(5m)^{n-\ell}\right]\leq \max\{a_1(d),a_2(d),a_3(d)\}\gamma^n m^{n+1}.\end{split}\label{k18}
\end{align} 
This shows that there exists $\gamma\in [1,\infty)$ such that for all $d,n\in\N$ it holds that
$
\FEY_{d,n+1,M(n+1)}
\leq \max\{a_1(d),a_2(d),a_3(d)\}\gamma^{n+1} (M(n+1))^{n+2}.
$
This and \eqref{c08} imply that there exists $\gamma\in [1,\infty)$ such that
$
\FEY_{d,n+1,M(n+1)}\leq \max\{a_1(d),a_2(d),a_3(d)\}(\gamma M(n))^{n+2}.
$ This, \eqref{s07}, and the fact that $K<\infty$
show that there exist $\gamma, C\in [1,\infty)$ such that for all
$d,n\in\N$ it holds that
\begin{align}\begin{split}
&\FEY_{d,n+1,M(n+1)}
\left\lvert \mathcal{E}_{d,n,M(n)}\right\rvert^{2+\delta}\leq \xeqref{s07}
\FEY_{d,n+1,M(n+1)}
\left\lvert K
\Delta_{d,n,M(n)}\right\rvert^{2+\delta}\\
&
\leq \max\{a_1(d),a_2(d),a_3(d)\}(\gamma M(n))^{n+2}
\left( K\gamma^n (M(n))^{-n/2} (V_d(0,0))^{10}\right)^{\delta+2}\\
&\leq \max\{a_1(d),a_2(d),a_3(d)\}\gamma^{(\delta+2)(2n+2)}K^{\delta+2} (M(n))^2(M(n))^{-0.5n\delta}
(V_d(0,0))^{10(\delta+2)}\\
&
\leq \max\{a_1(d),a_2(d),a_3(d)\}C(V_d(0,0))^{10(\delta+2)}.
\end{split}\end{align}
This and \eqref{k18} show that there exists $C\in (0,\infty)$ such that
for all $d\in\N$, $\epsilon\in (0,1)$ it holds that
\begin{align}\begin{split}
&
\FEY_{d,\mathsf{n}(d,\epsilon),M(\mathsf{n}(d,\epsilon))}\epsilon^{2+\delta}
\leq 
\FEY_{d,1,M(1)}+\1_{[2,\infty)}(\mathsf{n}(d,\epsilon))
\FEY_{d,\mathsf{n}(d,\epsilon),M(\mathsf{n}(d,\epsilon))}\epsilon^{2+\delta}\\
&\leq \FEY_{d,1,M(1)}+
\1_{[2,\infty)}(\mathsf{n}(d,\epsilon))\left[
\FEY_{d,n+1,M(n+1)}
\left\lvert
\mathcal{E}_{d,n,M(n)}\right\rvert^{2+\delta}\right]\Bigr|_{n=\mathsf{n}(d,\epsilon)-1}\\
&\leq \max\{a_1(d),a_2(d),a_3(d)\}C(V_d(0,0))^{10(\delta+2)}.
\end{split}\end{align}
This, \eqref{s13}, and the fact that
$ \exists \,\gamma\in (0,\infty)\,\forall\,d\in \N\colon 
 V_d(0,0) +\sum_{i=1}^{3}\lvert a_i(d)\rvert\leq \gamma d^\gamma$ 
(see \eqref{p13} and \eqref{p16} and recall the fact that
$ \exists\, \gamma\in (0,\infty)\,\forall\,d\in \N\colon 
b(d)+
\sum_{i=1}^{3}a_i(d)\leq \gamma d^\gamma
$)
imply  \eqref{s03}. The proof of \cref{s02} is thus completed.
\end{proof}

\paragraph*{Acknowledgement}
This work has been
funded by the Deutsche Forschungsgemeinschaft (DFG, German Research Foundation) through
the research grant HU1889/6-2.

{
\small
\bibliographystyle{acm}
\bibliography{bib-XBSDE-114}
}

\end{document}